\makeatletter \@addtoreset{equation}{section} \makeatother
\newtheorem{theorem}{Theorem}[section]
\newtheorem{proposition}{Proposition}[section]
\newtheorem{lemma}{Lemma}[section]
\newtheorem{remark}{Remark}[section]
\begin{document}
\title{Construction of solutions for the critical polyharmonic equation with competing potentials}

\author{Wenjing Chen\footnote{E-mail address:\, {\tt wjchen@swu.edu.cn} (W. Chen), {\tt zxwangmath@163.com} (Z. Wang).}\  \ and Zexi Wang\footnote{Corresponding author.}\\
\footnotesize  School of Mathematics and Statistics, Southwest University,
Chongqing, 400715, P.R. China}

\date{ }
\maketitle

\begin{abstract}
{ In this paper, we consider the following critical polyharmonic equation 
\begin{align*}
(  -\Delta)^m u+V(|y'|,y'')u=Q(|y'|,y'')u^{m^*-1},\quad u>0, \quad y=(y',y'')\in \mathbb{R}^3\times \mathbb{R}^{N-3},
  \end{align*}
where  $N>4m+1$, $m\in \mathbb{N}^+$, $m^*=\frac{2N}{N-2m}$, $V(|y'|,y'')$ and $Q(|y'|,y'')$ are bounded nonnegative functions in $\mathbb{R}^+\times \mathbb{R}^{N-3}$. By using the reduction argument and local Poho\u{z}aev identities, we prove that if $Q(r,y'')$ has a stable critical point $(r_0,y_0'')$ with $r_0>0$, $Q(r_0,y_0'')>0$, $D^\alpha Q(r_0,y_0'')=0$ for any $|\alpha|\leq 2m-1$ and $B_1V(r_0,y_0'')-B_2\sum\limits_{|\alpha|=2m}D^\alpha Q(r_0,y_0'')\int_{\mathbb{R}^N}y^\alpha U_{0,1}^{m^*}dy>0$,
then the above problem has a faimily of solutions concentrated at points lying on the top and the bottom circles of a cylinder, where $B_1$ and $B_2$ are positive constants that will be given later. }

\vspace{.2cm}
\emph{\bf Keywords:} Polyharmonic equation; Competing potentials; Reduction argument; Local Poho\u{z}aev identities.

\vspace{.2cm}
\emph{\bf 2020 Mathematics Subject Classification:} 35J60; 35B38; 35J91.
\end{abstract}

\section{Introduction}
In this paper, we concern the following polyharmonic  equation with double potentials
\begin{align}\label{pro}
(  -\Delta)^m u+V(y)u=Q(y)u^{m^*-1},\quad u>0,\quad u\in H^{m}(\mathbb{R}^N),
  \end{align}
where $N>4m+1$, $m\in \mathbb{N}^+$, $m^*=\frac{2N}{N-2m}$,  $V(y)$ and $Q(y)$ are two potential functions, and $H^{m}(\mathbb{R}^N)$ is the Hilbert space
with the scalar product
\begin{align*}
 \langle u,v\rangle=\left\{
  \begin{array}{ll}
  \displaystyle \int_{\mathbb{R}^N}\Big((\Delta^{\frac{m}{2}}u)(\Delta^{\frac{m}{2}}v)+V(y)uv \Big)dy,\quad &\text{if $m$ is even},
  \vspace{.3cm}\\ \displaystyle\int_{\mathbb{R}^N}\Big(\big(\nabla (\Delta^{\frac{m-1}{2}}u)\big)\cdot\big(\nabla(\Delta^{\frac{m-1}{2}}v)\big)+V(y)uv\Big) dy,\quad &\text{if $m$ is odd}.
    \end{array}
    \right.
  \end{align*}

In the case of $m=1$, \eqref{pro} reduces to
\begin{align}\label{mpro}
 -\Delta u+V(y)u=Q(y)u^{2^*-1},\quad u>0,\quad u\in H^1(\mathbb{R}^N),
  \end{align}
where $2^*=\frac{2N}{N-2}$.  If $V(y)=0$, \eqref{mpro} can be written as
\begin{align}\label{mmpro}
 -\Delta u=Q(y)u^{2^*-1},\quad u>0,\quad u\in D^{1,2}(\mathbb{R}^N),
  \end{align}
which comes from the following prescribed curvature problem by using the stereographic projection
\begin{equation*}
  -\Delta_{\mathbb{S}^N}u+\frac{N(N-2)}{2}u=\widetilde{Q}(y)u^{2^*-1},\quad u>0\ \ \text{on $\mathbb{S}^N$},
\end{equation*}
where $\widetilde{Q}(y)$ is a potential function.
Recent years, much attention has been paid to study the existence of solutions for \eqref{mmpro} under suitable assumptions on $Q(y)$.
 Using the number of the bubbles of solutions as the parameter, Wei and Yan \cite{WY1} obtained infinitely many solutions on a circle for \eqref{mmpro}  when $Q(y)$ is radially symmetric.
Since then, the idea of \cite{WY1} has been exploited and
further developed by many scholars in the search for infinitely many solutions for equations with critical or asymptotic critical growth, see e.g. \cite{GN,LR,LWX,WY2,MWY,PWW,WW,DMW,DM,GL2,DG,WY3}. In particular, Peng, Wang and Wei \cite{PWW} constructed infinitely many solutions for \eqref{mmpro} under a weaker symmetry condition on $Q(y)$, where the concentration point can be a saddle point of $Q(y)=Q(|\tilde{y}'|,\tilde{y}'')$, $y=(\tilde{y}',\tilde{y}'')\in \mathbb{R}^2\times \mathbb{R}^{N-2}$.
Recently,  Duan, Musso and Wei \cite{DMW} constructed a new type of solutions for \eqref{mmpro}
when $Q(y)$ is radially symmetric, and the solutions concentrate at points lying on the top and the
bottom circles of a cylinder. More precisely, these solutions are different from \cite{PWW,WY1} and have the form
\begin{equation*}
  \sum\limits_{j=1}^kW_{\bar{x}_j,\lambda}+\sum\limits_{j=1}^kW_{\underline{x}_j,\lambda}+\varphi_k,
\end{equation*}
where $W_{x,\lambda}(y)=\big(\frac{\lambda}{1+\lambda^2|y-x|^2}\big)^{\frac{N-2}{2}}$, $\varphi_k$ is a remainder term,
\begin{align*}
   \left\{
  \begin{array}{ll}
  \bar{x}_j=\big(\bar{r}\sqrt{1-\bar{h}^2}\cos \frac{2(j-1)\pi}{k},\bar{r}\sqrt{1-\bar{h}^2}\sin \frac{2(j-1)\pi}{k},\bar{r}\bar{h},0\big),
  \quad &j=1,2,\cdots,k,\\
  \underline{x}_j=\big(\bar{r}\sqrt{1-\bar{h}^2}\cos \frac{2(j-1)\pi}{k},\bar{r}\sqrt{1-\bar{h}^2}\sin \frac{2(j-1)\pi}{k},-\bar{r}\bar{h},0\big),
  \quad &j=1,2,\cdots,k,
    \end{array}
    \right.
 \end{align*}
with $\bar{h}$ goes to zero, and $\bar{r}$ is close to some $r_0>0$.

When $Q(y)=1$, Benci and Cerami \cite{BC} first obtained the existence result for \eqref{mpro}, and proved that if $\|V\|_{\frac{N}{2}}$ 
is suitably small, then
\eqref{mpro} has a solution.
After \cite{BC}, there is no other result for \eqref{mpro} until the contribution made by Chen, Wei and Yan
\cite{CWY}. They proved that when $N \geq5$, $V(y)$ is radially symmetric, and $r^2V(r)$ has a local maximum point or a local minimum point, \eqref{mpro} possesses infinitely many solutions. Later, Peng, Wang and Yan \cite{PWY} generalized the result of \cite{CWY} from $V(y)$ is radial to partial radial. Inspired by the results of \cite{DMW} and \cite{PWY},  under a weaker symmetry condition on $V(y)$, Du et al. \cite{DHWW} obtained a new type of  solutions for \eqref{mpro}, which are different from those obtained in \cite{PWY}. 
For more related results about \eqref{mpro}, we
refer the readers to \cite{GG,HWW2,GLN2,VW,DHW} and references therein.

Last but not least, we mention that, by combining the finite dimensional reduction argument and local Poho\u{z}aev identities, He, Wang and Wang \cite{HWW1} studied \eqref{mpro} with double potentials,
 and constructed infinitely many solutions, where they assumed that:

\begin{description}
\item [$(C_1)$] $V(y)=V(|\tilde{y}'|,\tilde{y}'')$ and $Q(y)=Q(|\tilde{y}'|,\tilde{y}'')$ are bounded nonnegative functions, where $y=(\tilde{y}',\tilde{y}'')\in \mathbb{R}^2\times \mathbb{R}^{N-2}$;
\end{description}

\begin{description}
\item [$(C_2)$] $Q(\tilde{r},\tilde{y}'')$ has a stable critical point
$(\tilde{r}_0,\tilde{y}_0'')$ in the sense that
$Q(\tilde{r},\tilde{y}'')$ has a critical point $(\tilde{r}_0,\tilde{y}_0'')$
 satisfying $\tilde{r}_0>0$, $Q(\tilde{r}_0,\tilde{y}_0'')=1$, and
\begin{equation*}
deg\big(\nabla Q(\tilde{r},\tilde{y}''),(\tilde{r}_0,\tilde{y}_0'')\big)\neq0;
\end{equation*}
\end{description}


\noindent $(C_3)$ $V(\tilde{r},\tilde{y}'')\in C^1(B_\rho(\tilde{r}_0,\tilde{y}_0''))$, $Q(\tilde{r},\tilde{y}'')\in C^{3}(B_\rho(\tilde{r}_0,\tilde{y}_0''))$, and
 \begin{equation*}
   V(\tilde{r}_0,\tilde{y}_0'')\int_{\mathbb{R}^N}U_{0,1}^2dy-\frac{\Delta Q(\tilde{r}_0,\tilde{y}_0'')}{2^*N}\int_{\mathbb{R}^N}y^2U_{0,1}^{2^*}dy>0,
 \end{equation*}
\quad \quad \ where $\rho>0$ is a small constant, and $U_{0,1}$ is the unique positive solution of $- \Delta u=u^{2^*-1}$ in $\mathbb{R}^N$.

\noindent
More recently, the fractional case was considered  by Liu \cite{LT}.

If $m\geq 2$, \eqref{pro} becomes the equation involving polyharmonic operator. It is worth to pointing that the polyharmonic operator has found considerable interest in the literature due to its geometry roots, and problems involving polyharmonic operator present new and changeling features compared with the elliptic operator (namely $m=1$).
To our best knowledge, few results are known for \eqref{pro} besides \cite{CW,GLN1,GL1,GGH,GL}. In particular,  Guo and Li \cite{GL} first obtained infinitely many solutions for \eqref{pro}, where $V(y)=0$ and $Q(y)$ is radially
symmetric. For $Q(y)=1$ and $V(y)$ is partial radial, we got a new type of solutions in \cite{CW}, which are concentrated at points lying on the top and the bottom circles of a cylinder.


Motivated by the results mentioned above, especially \cite{CW}, \cite{DMW} and \cite{HWW1}, in this paper, under a weaker symmetry condition on $V(y)$ and $Q(y)$, namely,
 $V(y)\neq0$, $Q(y)\neq1$, and $V(y)$, $Q(y)$ are partial radial,
 we intend to construct a new type of solutions for problem \eqref{pro}, which concentrate at points lying on the top and the bottom circles of a cylinder. Moreover, we would like to
study how the critical points of $V(y)$ and $Q(y)$ affect the existence of solutions. 

Before the statement of the main results, let us first introduce some notations. We assume that $V(y)$ and $Q(y)$ satisfy the
following assumptions:


\begin{description}
\item [$(A_1)$] $V(y)=V(|y'|,y'')$ and $Q(y)=Q(|y'|,y'')$ are bounded nonnegative functions, where $y=(y',y'')\in \mathbb{R}^3\times \mathbb{R}^{N-3}$;
\end{description}

\begin{description}
\item [$(A_2)$] $Q(r,y'')$ has a stable critical point
$(r_0,y_0'')$ in the sense that
$Q(r,y'')$ has a critical point $(r_0,y_0'')$
 satisfying $r_0>0$, $Q(r_0,y_0'')=1$, $D^\alpha Q(r_0,y_0'')=0$ for any $|\alpha|\leq 2m-1$, and
\begin{equation*}
deg\big(\nabla Q(r,y''),(r_0,y_0'')\big)\neq0;
\end{equation*}
\end{description}


\noindent $(A_3)$ $V(r,y'')\in C^1(B_\rho(r_0,y_0''))$, $Q(r,y'')\in C^{2m+1}(B_\rho(r_0,y_0''))$, and
 \begin{equation*}
   B_1V(r_0,y_0'')-B_2\sum\limits_{|\alpha|=2m}D^\alpha Q(r_0,y_0'')\int_{\mathbb{R}^N}y^\alpha U_{0,1}^{m^*}dy>0,
 \end{equation*}
\quad \quad \ where $\rho>0$ is a small constant, $B_1$ and $B_2$ are positive constants given in Lemma \ref{ener1}.


\vspace{.2cm}

Recall that (see \cite{S})
\begin{equation*}
  U_{x,\lambda}(y)=P_{m,N}^{\frac{N-2m}{4m}}\Big(\frac{\lambda}{1+\lambda^2|y-x|^2}\Big)^{\frac{N-2m}{2}},\quad \lambda>0,\quad x\in \mathbb{R}^N,
\end{equation*}
is the unique radial solution of the equation
\begin{align*}
(  -\Delta)^m u=u^{m^*-1},\quad u>0,\quad \text{ in $\mathbb{R}^N$},
  \end{align*}
where $P_{m,N}=\prod\limits_{h=-m}^{m-1}(N+2h)$ is a constant.

Define
\begin{align*}
  H_s=\Big\{u:&u\in H^{m}(\mathbb{R}^N),u(y_1,y_2,y_3,y'')=u(y_1,-y_2,-y_3,y''),\\
  &u(r \cos\theta,r \sin\theta, y_3,y'')=u\Big(r\cos \big(\theta+\frac{2j \pi}{k}\big),r\sin \big(\theta+\frac{2j \pi}{k}\big),y_3,y''\Big)\Big\},
\end{align*}
where $r=\sqrt{y_1^2+y_2^2}$ and $\theta=\arctan \frac{y_2}{y_1}$.

 Let
 \begin{align*}
   \left\{
  \begin{array}{ll}
  x_j^+=\big(\bar{r}\sqrt{1-\bar{h}^2}\cos \frac{2(j-1)\pi}{k},\bar{r}\sqrt{1-\bar{h}^2}\sin \frac{2(j-1)\pi}{k},\bar{r}\bar{h},\bar{y}''\big),
  \quad &j=1,2,\cdots,k,\\
  x_j^-=\big(\bar{r}\sqrt{1-\bar{h}^2}\cos \frac{2(j-1)\pi}{k},\bar{r}\sqrt{1-\bar{h}^2}\sin \frac{2(j-1)\pi}{k},-\bar{r}\bar{h},\bar{y}''\big),
  \quad &j=1,2,\cdots,k,
    \end{array}
    \right.
 \end{align*}
where $\bar{y}''$ is a vector in $\mathbb{R}^{N-3}$, $\bar{h}\in (0,1)$ and $(\bar{r},\bar{y}'')$ is close to $(r_0,y_0'')$.

In this paper, we consider the following three cases of $\bar{h}$ in the process of constructing solutions:
\vspace{.2cm}

$\bullet$ {\bf Case 1.} $\bar{h}$ goes to 1;

\vspace{.2cm}

$\bullet$ {\bf Case 2.} $\bar{h}$ is separated from 0 and 1;

\vspace{.2cm}

$\bullet$ {\bf Case 3.} $\bar{h}$ goes to 0.

\vspace{.2cm}
We will use $U_{x_j^{\pm},\lambda}$ as an approximate solution. To deal with the slow decay of this function when $N$ is not big enough, we introduce the smooth cut-off function $\xi(y)=\xi(|y'|,y'')$ satisfying $\xi=1$ if $|(r,y'')-(r_0,y_0'')|\leq \delta$, $\xi=0$ if $|(r,y'')-(r_0,y_0'')|\geq 2\delta$, and $0\leq \xi \leq 1$, where $\delta>0$ is a small constant such that $Q(r,y'')>0$ if $|(r,y'')-(r_0,y_0'')|\leq 10\delta$.

Denote
\begin{equation*}
  Z_{x_j^{\pm},\lambda}=\xi U_{x_j^{\pm},\lambda},\quad Z^*_{\bar{r},\bar{h},\bar{y}'',\lambda}=\sum\limits_{j=1}^kU_{x_j^{+},\lambda}+\sum\limits_{j=1}^kU_{x_j^{-},\lambda},\quad
  Z_{\bar{r},\bar{h},\bar{y}'',\lambda}=\sum\limits_{j=1}^k\xi U_{x_j^{+},\lambda}+\sum\limits_{j=1}^k\xi U_{x_j^{-},\lambda}.
\end{equation*}

As for the {\bf Case 1}, we assume that $\nu=N-4m-\iota$, $\iota>0$ is a small constant, $k>0$ is a large integer, $\lambda\in \big[L_0k^{\frac{N-2m}{N-4m-\nu}},L_1k^{\frac{N-2m}{N-4m-\nu}}\big]$ for some constants $L_1>L_0>0$ and $(\bar{r},\bar{h},\bar{y}'')$ satisfies
\begin{equation}\label{case1}
  |(\bar{r},\bar{y}'')-(r_0,y_0'')|\leq \frac{1}{\lambda^{1-\vartheta}},\quad \sqrt{1-\bar{h}^2}=M_1\lambda^{-\frac{\nu}{N-2m}}+o(\lambda^{-\frac{\nu}{N-2m}}),
\end{equation}
where $\vartheta>0$ is a small constant, $M_1$ is a positive constant. 

\begin{theorem}\label{th1}
Assume that $N>4m+1$, if $V(y)$ and $Q(y)$ satisfy $(A_1)$, $(A_2)$ and $(A_3)$, then there exists an integer $k_0>0$, such that for any $k>k_0$, problem \eqref{pro} has a solution $u_k$ of the form
\begin{equation*}
  u_k=Z_{\bar{r}_k,\bar{h}_k,\bar{y}_k'',\lambda_k}+\phi_k,
\end{equation*}
where $\lambda_k\in \big[L_0k^{\frac{N-2m}{N-4m-\nu}},L_1k^{\frac{N-2m}{N-4m-\nu}}\big]$ and $\phi_k\in H_s$. Moreover, as $k\rightarrow\infty$, $|(\bar{r}_k,\bar{y}_k'')-(r_0,y_0'')|\rightarrow0$, $\sqrt{1-\bar{h}_k^2}=M_1\lambda_k^{-\frac{\nu}{N-2m}}+o(\lambda_k^{-\frac{\nu}{N-2m}})$, and
$\lambda_k^{-\frac{N-2m}{2}}\|\phi_k\|_\infty\rightarrow0$.
\end{theorem}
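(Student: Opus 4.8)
The plan is a finite-dimensional Lyapunov--Schmidt reduction performed in the symmetric class $H_s$, closed by local Poho\u{z}aev identities. First I would fix the admissible parameter set $\Lambda_k$ consisting of $(\bar r,\bar h,\bar y'',\lambda)$ with $\lambda\in\big[L_0k^{\frac{N-2m}{N-4m-\nu}},L_1k^{\frac{N-2m}{N-4m-\nu}}\big]$ and $(\bar r,\bar h,\bar y'')$ satisfying \eqref{case1}, and equip $H_s$ with a weighted norm adapted to the slow decay of the $2k$ truncated bubbles $Z_{x_j^\pm,\lambda}=\xi U_{x_j^\pm,\lambda}$. Writing $E$ for the span of the truncated derivatives $\xi\,\partial_\lambda U_{x_j^\pm,\lambda}$, $\xi\,\partial_{\bar r}U_{x_j^\pm,\lambda}$, $\xi\,\partial_{\bar h}U_{x_j^\pm,\lambda}$ and $\xi\,\partial_{\bar y_l''}U_{x_j^\pm,\lambda}$, I would record the uniform invertibility (for $k$ large, uniformly on $\Lambda_k$) of the linearised operator $(-\Delta)^m+V-(m^*-1)QZ^{m^*-2}$ restricted to $E^\perp\cap H_s$, and then solve, for $\phi=\phi_k(\bar r,\bar h,\bar y'',\lambda)\in E^\perp\cap H_s$, the projected equation
\[
(-\Delta)^m(Z+\phi)+V\,(Z+\phi)-Q\,|Z+\phi|^{m^*-2}(Z+\phi)\in E,
\]
by the contraction mapping principle. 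This yields $\phi_k$ with $\lambda^{-\frac{N-2m}{2}}\|\phi_k\|_\infty\to 0$ uniformly on $\Lambda_k$; the smallness rests on the estimate of the approximation error $(-\Delta)^mZ+VZ-QZ^{m^*-1}$, which uses $(A_1)$, the decay of $U_{x,\lambda}$, the bubble separation guaranteed by the ranges of $\lambda$ and $\bar h$ in \eqref{case1}, and the vanishing $D^\alpha Q(r_0,y_0'')=0$ for $|\alpha|\le 2m-1$ from $(A_2)$.

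It then remains to choose $(\bar r_k,\bar h_k,\bar y_k'',\lambda_k)\in\Lambda_k$ for which $u_k=Z+\phi_k$ has all its Lagrange multipliers equal to zero, hence is a genuine solution. Rather than differentiating the reduced energy $F_k=I(Z+\phi_k)$ directly, I would apply the local Poho\u{z}aev identities to $u_k$ on balls centred at the concentration points $x_j^+$: the dilation identity produces the equation in $\lambda$, the translation identity in the $y_3$-variable produces the equation in $\bar h$, and the translation identities in $r$ and in the $y''$-variables produce the system $\nabla_{(\bar r,\bar y'')}(\cdots)=0$. Feeding in the energy expansion of Lemma \ref{ener1} and the bound on $\phi_k$, these reduce, after multiplying by suitable powers of $\lambda$ and $k$, to a system whose leading part pits the competition quantity
\[
B_1V(\bar r,\bar y'')-B_2\sum_{|\alpha|=2m}D^\alpha Q(\bar r,\bar y'')\int_{\mathbb{R}^N}y^\alpha U_{0,1}^{m^*}\,dy
\]
against the interaction energy between the top and the bottom circles and within each circle; the self-interaction-with-$V$ term and the $2m$-th order Taylor part of $Q$ are both of order $\lambda^{-2m}$, and they are balanced against the bubble interactions precisely when $\lambda\sim k^{\frac{N-2m}{N-4m-\nu}}$ and $\sqrt{1-\bar h^2}\sim M_1\lambda^{-\frac{\nu}{N-2m}}$, which is why \eqref{case1} is imposed. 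The $(\bar r,\bar y'')$-equations meanwhile reduce to $c\,\nabla Q(\bar r,\bar y'')+o(1)=0$ with $c\ne 0$.

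The finite-dimensional system is then solved by a degree argument. Condition $(A_3)$, i.e. the stated positivity $B_1V(r_0,y_0'')-B_2\sum_{|\alpha|=2m}D^\alpha Q(r_0,y_0'')\int y^\alpha U_{0,1}^{m^*}\,dy>0$, forces the $(\lambda,\bar h)$-block to have a zero in the interior of its prescribed range, with the asymptotics $\sqrt{1-\bar h_k^2}=M_1\lambda_k^{-\frac{\nu}{N-2m}}+o(\lambda_k^{-\frac{\nu}{N-2m}})$; the nonzero $\deg(\nabla Q,(r_0,y_0''))$ in $(A_2)$ then lets Brouwer degree produce $(\bar r_k,\bar y_k'')\to(r_0,y_0'')$. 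The two blocks are combined by treating their mutual coupling as a lower-order perturbation, since to leading order the $(\lambda,\bar h)$-block depends on $V$ and $Q$ only through their $2m$-jet at $(r_0,y_0'')$ and the $(\bar r,\bar y'')$-block only through $\nabla Q$. This produces $u_k=Z_{\bar r_k,\bar h_k,\bar y_k'',\lambda_k}+\phi_k$ with $\phi_k\in H_s$ and all the asserted limits.

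\textbf{Main difficulty.}
I expect the principal obstacle to be the bookkeeping behind the second paragraph: pinning down the exact orders of the three competing contributions --- the $V$-term of order $\lambda^{-2m}$, the $2m$-th order Taylor part of $Q$ again of order $\lambda^{-2m}$, and the two families of bubble interactions (top--bottom, and adjacent-within-a-circle) --- and verifying that with the scalings in \eqref{case1} the reduced system is an $O(1)$ perturbation of an explicit solvable one whose solution does not escape to $\partial\Lambda_k$. A secondary but delicate technical point is controlling the contribution of the cut-off $\xi$ and of the remainder $\phi_k$ to \emph{every} Poho\u{z}aev identity, and making the degree argument rigorous in spite of the genuine (not merely formal) coupling between the $(\bar r,\bar y'')$- and $(\bar h,\lambda)$-equations.
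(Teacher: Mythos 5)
Your outline (Lyapunov--Schmidt in the symmetric class, closed by Poho\u{z}aev identities, then a degree argument) is the right skeleton, and the error estimate you sketch for the projected problem matches the role of $(A_1)$, $(A_2)$, the bubble separation, and the vanishing of $D^\alpha Q(r_0,y_0'')$ for $|\alpha|\le 2m-1$. But the mechanism by which the paper converts Poho\u{z}aev identities into equations for the parameters is materially different from what you describe, in three respects.

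First, the paper does \emph{not} derive the $\lambda$-equation from a local dilation Poho\u{z}aev on balls around the bubbles; it uses the global pairing $\int_{\mathbb{R}^N}\big((-\Delta)^m u_k+Vu_k-Q(u_k)_+^{m^*-1}\big)\partial_\lambda Z_{\bar r,\bar h,\bar y'',\lambda}=0$ (equation \eqref{con3}), whose expansion is Lemma \ref{ener1}, and that alone yields the $\lambda$-balance \eqref{dela}. Second, the two genuinely local Poho\u{z}aev identities \eqref{con1}--\eqref{con2} are taken not on balls $B_\rho(x_j^+)$ but on the tubular set $D_\varrho=\{\,|(|y'|,y'')-(r_0,y_0'')|\le\varrho\,\}$, which is the whole point of the $\cite{PWY}$/$\cite{HWW1}$ device in the partial-radial setting: this is what turns the dilation identity \eqref{con1}, after cancellation with the tested equation and with \eqref{con2}, into the $\bar r$-derivative equation $\partial_{\bar r}Q(\bar r,\bar y'')=o(\lambda^{-1+\varepsilon})$ rather than into a $\lambda$-equation, and \eqref{con2} into the $\bar y''$-equations. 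There is in fact no ``translation in $r$'' Poho\u{z}aev identity; the $\bar r$-equation comes from dilation on the tube.

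Third, and most substantively, the paper does not treat $\bar h$ as an independent reduction parameter at all. The constrained space $\mathbb{H}$ imposes orthogonality only to $Z_{j,l}^\pm$ for $l=2$ ($\partial_\lambda$), $l=3$ ($\partial_{\bar r}$) and $l=4,\dots,N$ ($\partial_{\bar y_l''}$); there is no $\partial_{\bar h}$-direction in the kernel, hence no Lagrange multiplier for it and no need for a $y_3$-translation identity. Instead, $\bar h$ is slaved to $\lambda$ by the ansatz \eqref{case1}, and --- as Lemma \ref{AppA4} makes explicit --- the $\partial_\lambda$-derivative of $Z_{x_j^\pm,\lambda}$ already carries the $\partial_{\bar h}$-contribution because $\bar h=\bar h(\lambda)$. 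This keeps the count exact: $N-1$ multipliers $c_2,\dots,c_N$ against $N-1$ scalar equations \eqref{de1}, \eqref{de2}, \eqref{dela}, and the degree argument runs on $(t,\bar r,\bar y'')$ only. If you instead adjoin $\xi\partial_{\bar h}U_{x_j^\pm,\lambda}$ to the kernel and a separate $\bar h$-equation from the $y_3$-Poho\u{z}aev, you will find, on inserting the asymptotics of Lemma \ref{AppA6}, that your $(\lambda,\bar h)$-block is degenerate at leading order --- the $\bar h$-equation reproduces the same balance $\sqrt{1-\bar h^2}\sim M_1\lambda^{-\nu/(N-2m)}$ that the $\lambda$-equation already encodes --- so the two-variable degree argument would need to be reorganized (exactly what the paper's reparametrization avoids). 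There is also a separate technical worry with balls around individual bubbles: in Case~1, $\sqrt{1-\bar h^2}\to0$ and neighboring $x_j^+$ are at distance $O(k^{-1}\lambda^{-\nu/(N-2m)})$, so fixed balls overlap and $k$-dependent balls change the scaling of the boundary terms, which is precisely what the tubular domain $D_\varrho$ and the averaging over $\varrho\in(3\delta,4\delta)$ in \eqref{trans3} are designed to sidestep.

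So the proposal captures the flavor of the reduction and the role of $(A_2)$, $(A_3)$ in the degree argument, but it misidentifies the Poho\u{z}aev domains, the role of the dilation identity, and the treatment of $\bar h$; matching the paper's proof requires replacing the ball-based, four-parameter local Poho\u{z}aev scheme by the tube-based, $\bar h(\lambda)$-slaved scheme described above.
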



For the {\bf Case 2} and {\bf Case 3}, we assume that $k>0$ is a large integer, $\lambda\in \big[L_0'k^{\frac{N-2m}{N-4m}},L_1'k^{\frac{N-2m}{N-4m}}\big]$ for some constants $L_1'>L_0'>0$ and $(\bar{r},\bar{h},\bar{y}'')$ satisfies
\begin{equation}\label{case2}
  |(\bar{r},\bar{y}'')-(r_0,y_0'')|\leq\frac{1}{\lambda^{1-\vartheta}},\quad \bar{h}=a+{M_2\lambda^{-\frac{N-4m}{N-2m}}}+o(\lambda^{-\frac{N-4m}{N-2m}}),
\end{equation}
where $a\in [0,1)$, $\vartheta>0$ is a small constant, $M_2$ is a positive constant.

\begin{theorem}\label{th2}
Assume that $N>4m+1$, if $V(y)$ and $Q(y)$ satisfy $(A_1)$, $(A_2)$ and $(A_3)$, then there exists an integer $k_0>0$, such that for any $k>k_0$, problem \eqref{pro} has a solution $u_k$ of the form
\begin{equation*}
  u_k=Z_{\bar{r}_k,\bar{h}_k,\bar{y}_k'',\lambda_k}+\phi_k.
\end{equation*}
where $\lambda_k\in \big[L_0k^{\frac{N-2m}{N-4m}},L_1k^{\frac{N-2m}{N-4m}}\big]$ and $\phi_k\in H_s$. Moreover, as $k\rightarrow\infty$, $|(\bar{r}_k,\bar{y}_k'')-(r_0,y_0'')|\rightarrow0$, $\bar{h}_k=a+{M_2\lambda_k^{-\frac{N-4m}{N-2m}}}+o(\lambda_k^{-\frac{N-4m}{N-2m}})$, and
$\lambda_k^{-\frac{N-2m}{2}}\|\phi_k\|_\infty\rightarrow0$.
\end{theorem}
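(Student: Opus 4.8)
The plan is to carry out a finite-dimensional Lyapunov--Schmidt reduction in the symmetric class $H_s$ (so that the $k$ bubbles on the top circle and the $k$ bubbles on the bottom circle are rigidly tied together and only the parameters $(\bar r,\bar h,\bar y'',\lambda)$ are free), and then to close the reduced problem by a combination of the energy expansion of Lemma \ref{ener1} and \emph{local Pohožaev identities}, the latter being needed precisely because $V$ is only $C^1$ in $B_\rho(r_0,y_0'')$ and the reduced functional cannot be differentiated in all parameters. Since both Case 2 ($a\in(0,1)$, $\bar h$ separated from $0$ and $1$) and Case 3 ($a=0$, $\bar h\to0$) are contained in Theorem \ref{th2}, I would run the two arguments in parallel, pointing out the extra care required when $a=0$.

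\textbf{Step 1: approximate solution and error estimate.} Fix $(\bar r,\bar h,\bar y'',\lambda)$ in the admissible set \eqref{case2}, write $u=Z_{\bar r,\bar h,\bar y'',\lambda}+\phi$, and let $I$ denote the energy functional naturally associated with \eqref{pro}. Using the cut-off bubbles $Z_{x_j^\pm,\lambda}=\xi U_{x_j^\pm,\lambda}$ I would estimate, in a weighted $L^\infty$ norm adapted to the slow decay of the bubbles, the error $\ell_k:=(-\Delta)^mZ_{\bar r,\bar h,\bar y'',\lambda}+V\,Z_{\bar r,\bar h,\bar y'',\lambda}-Q\,(Z_{\bar r,\bar h,\bar y'',\lambda})^{m^*-1}$. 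Its contributions come from the cut-off $\xi$, from the potential $V$ (of order $\lambda^{-2m}$ locally), from the Taylor remainder of $Q$ around $(r_0,y_0'')$ --- where $D^\alpha Q(r_0,y_0'')=0$ for $|\alpha|\le 2m-1$ by $(A_2)$, so the first nonzero term is again of order $\lambda^{-2m}$ --- and from the interaction between distinct bubbles, which along each circle is governed by the mutual distance $\sim\bar r/k$ and between the two circles by $\sim\bar r\bar h$; this last quantity is small and is in fact the dominant interaction exactly when $a=0$.

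\textbf{Step 2: reduction.} With the approximate kernel $\mathrm{span}\{\partial_\lambda Z_{x_j^\pm,\lambda},\,\partial_{\bar r}Z_{x_j^\pm,\lambda},\,\partial_{\bar h}Z_{x_j^\pm,\lambda},\,\nabla_{\bar y''}Z_{x_j^\pm,\lambda}\}$, I would show that the operator obtained by linearizing \eqref{pro} at $Z_{\bar r,\bar h,\bar y'',\lambda}$ and restricting to the $H_s$-orthogonal complement of this span is invertible with a bound independent of $k$; a contraction-mapping argument then produces a unique small $\phi_k=\phi_k(\bar r,\bar h,\bar y'',\lambda)\in H_s$ in that complement solving the auxiliary equation, with $\|\phi_k\|$ controlled by $\|\ell_k\|$, which in particular gives $\lambda_k^{-\frac{N-2m}{2}}\|\phi_k\|_\infty\to0$. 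By the usual argument, $u=Z_{\bar r,\bar h,\bar y'',\lambda}+\phi_k$ solves \eqref{pro} if and only if $(\bar r,\bar h,\bar y'',\lambda)$ is a critical point of $F_k(\bar r,\bar h,\bar y'',\lambda):=I(Z_{\bar r,\bar h,\bar y'',\lambda}+\phi_k)$.

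\textbf{Step 3: the finite-dimensional system.} Expanding $F_k$ via Lemma \ref{ener1}, its leading part beyond the bubble self-energy and the circle interaction energies carries the term $\lambda^{-2m}\big(B_1V(\bar r,\bar y'')-B_2\sum_{|\alpha|=2m}D^\alpha Q(\bar r,\bar y'')\int_{\mathbb{R}^N}y^\alpha U_{0,1}^{m^*}dy\big)$, whose coefficient is positive at $(r_0,y_0'')$ by $(A_3)$. Since $V\in C^1$ only, instead of differentiating $F_k$ in $\bar r$ and $\bar y''$ I would impose on the true solution $u=Z+\phi_k$ the local Pohožaev identities on balls $B_\rho(x_j^\pm)$ associated with translations in the radial $y'$-direction and in the $y''$-directions; these are valid under $(A_3)$ and, after dividing by the common nonzero factor, reduce to $\nabla Q(\bar r,\bar y'')+o(1)=0$. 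For $\lambda$ and $\bar h$ I would use the $C^1$-expansion of $F_k$ directly: $\partial_\lambda F_k=0$ balances the $\lambda^{-2m}$-term against $k$ times a two-bubble interaction energy, forcing $\lambda_k\sim k^{\frac{N-2m}{N-4m}}$ in the stated interval, while $\partial_{\bar h}F_k=0$ balances the top--bottom interaction against the same potential term and yields $\bar h_k=a+M_2\lambda_k^{-\frac{N-4m}{N-2m}}+o(\lambda_k^{-\frac{N-4m}{N-2m}})$, with $M_2>0$ again thanks to the sign in $(A_3)$. Solving the scalar $(\lambda,\bar h)$ pair by an implicit-function/contraction argument in terms of $(\bar r,\bar y'')$, and substituting back, leaves an $(N-2)$-dimensional system of the form $\nabla Q(\bar r,\bar y'')+o(1)=0$ near $(r_0,y_0'')$, which has a solution $(\bar r_k,\bar y_k'')\to(r_0,y_0'')$ because $\deg(\nabla Q(r,y''),(r_0,y_0''))\ne0$ by $(A_2)$ and the $o(1)$ perturbation does not alter the degree. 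This produces $(\bar r_k,\bar h_k,\bar y_k'',\lambda_k)$ and hence $u_k=Z_{\bar r_k,\bar h_k,\bar y_k'',\lambda_k}+\phi_k$; positivity $u_k>0$ follows from the standard argument in this parameter range.

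\textbf{Main obstacle.} The heaviest technical point is Step 2 --- the $k$-uniform invertibility of the linearized operator on the orthogonal complement together with the sharp error estimate of Step 1 --- in the regime $a=0$ of Case 3, where the bubble $U_{x_j^+,\lambda}$ and the corresponding $U_{x_j^-,\lambda}$ nearly collide, so that the top--bottom interaction is of the same order as, or larger than, the neighbor interaction along a circle. One must expand this interaction precisely enough to extract the correct $\bar h$-dependence in Step 3, while keeping every estimate uniform in $k$; the lack of a simple maximum principle for $(-\Delta)^m$ and the slow decay handled through the cut-off $\xi$ make this bookkeeping considerably more delicate than in the second-order case.
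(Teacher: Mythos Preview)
Your overall architecture (Lyapunov--Schmidt reduction in $H_s$, then close via Poho\v{z}aev identities for $(\bar r,\bar y'')$ plus the $\lambda$--energy expansion of Lemma \ref{ener1}, finally a degree argument using $(A_2)$) matches the paper, but your handling of $\bar h$ is genuinely different and contains a gap. In the paper $\bar h$ is \emph{not} a free reduction parameter: it is prescribed a priori as the function of $\lambda$ in \eqref{case2}, the approximate kernel is built only from $\partial_\lambda Z_{x_j^\pm,\lambda}$, $\partial_{\bar r}Z_{x_j^\pm,\lambda}$ and $\partial_{\bar y''_l}Z_{x_j^\pm,\lambda}$ (note that $Z_{j,2}^\pm=\partial_\lambda Z_{x_j^\pm,\lambda}$ is the total derivative through $\bar h(\lambda)$, cf.\ Lemma \ref{AppA4}), and the finite--dimensional system consists of exactly the $N-1$ equations \eqref{con1}--\eqref{con3}; there is no $\bar h$--equation at all, and the three sub--cases in Section \ref{four} simply check that for each admissible prescription of $\bar h$ the single scalar $\lambda$--equation \eqref{dela'} can be solved by degree.

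Your plan instead places $\partial_{\bar h}Z_{x_j^\pm,\lambda}$ in the kernel and proposes to determine $\bar h$ from $\partial_{\bar h}F_k=0$. The specific mechanism you invoke --- that this equation ``balances the top--bottom interaction against the same potential term'' --- cannot work: the leading potential/$Q$ contribution $\lambda^{-2m}\big(B_1V(\bar r,\bar y'')-B_2\sum_{|\alpha|=2m}D^\alpha Q(\bar r,\bar y'')\int y^\alpha U_{0,1}^{m^*}\big)$ does not depend on $\bar h$, so $\partial_{\bar h}F_k$ involves only the two interaction sums of Lemma \ref{AppA6}. Balancing those selects one particular asymptotic for $\bar h$ and does not yield the full family $\bar h_k=a+M_2\lambda_k^{-\frac{N-4m}{N-2m}}+o(\lambda_k^{-\frac{N-4m}{N-2m}})$ with $a\in[0,1)$ and $M_2>0$ arbitrary, which is what Theorem \ref{th2} asserts. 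A secondary, more technical discrepancy: the paper runs the Poho\v{z}aev identities on the tube $D_\varrho=\{|(|y'|,y'')-(r_0,y_0'')|\le\varrho\}$ (so that $u_k=\phi$ on $\partial D_\varrho$ and the boundary terms are controlled by Lemmas \ref{fi}--\ref{thi}), not on balls $B_\rho(x_j^\pm)$.
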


\begin{remark}
{\rm The condition $N>4m+1$ is used in Lemmas \ref{err} and \ref{ener1} to guarantee the existence of a small constant $\iota>0$ for Theorem \ref{th1} ($\iota=0$ in Theorem \ref{th2}).
Moreover, it is needed in the estimates of local Poho\u{z}aev identities \eqref{con1} and \eqref{con2}, we can see this in Lemma \ref{fi}.}
\end{remark}


\begin{remark}
{\rm The condition $D^\alpha Q(r_0,y_0'')=0$ for any $|\alpha|\leq 2m-1$ is used in the estimate of the error term, see Lemma \ref{err} for more details.}
\end{remark}

\begin{remark}
{\rm Combining Theorems \ref{th1} and \ref{th2} with the results obtained in \cite{CW}, we can see that the role of the stable critical point of $Q(r,y'')$ in constructing  solutions is more important than that of $V(r,y'')$. Moreover, $(A_3)$ implies that $V(r_0,y_0'')$ can affect the sign of $B_0:=\sum\limits_{|\alpha|=2m}D^\alpha Q(r_0,y_0'')\int_{\mathbb{R}^N}y^\alpha U_{0,1}^{m^*}dy$ and $B_0$
 can be nonnegative.} 
\end{remark}


The paper is organized as follows. In Section \ref{two}, we will carry out the reduction procedure. Then we will study the reduced problem and prove Theorem \ref{th1} in Section \ref{three}. Section \ref{four} is devoted to the proof of Theorem \ref{th2}. In Appendix \ref{AppA}, we put some basic estimates. Throughout the paper, $C$ denotes positive constant possibly different from line to line, $A=o(B)$ means $A/B\rightarrow 0$ and $A=O(B)$ means that $|A/B|\leq C$.

\section{Reduction argument}\label{two}
Let
\begin{equation*}
  \|u\|_*=\sup\limits_{y\in \mathbb{R}^N}\bigg(\sum\limits_{j=1}^k\Big(\frac{1}{(1+\lambda|y-x_j^+|)^{\frac{N-2m}{2}+\tau}}+
  \frac{1}{(1+\lambda|y-x_j^-|)^{\frac{N-2m}{2}+\tau}}
  \Big)\bigg)^{-1}\lambda^{-\frac{N-2m}{2}}|u(y)|,
\end{equation*}
and
\begin{equation*}
  \|f\|_{**}=\sup\limits_{y\in \mathbb{R}^N}\bigg(\sum\limits_{j=1}^k\Big(\frac{1}{(1+\lambda|y-x_j^+|)^{\frac{N+2m}{2}+\tau}}+
  \frac{1}{(1+\lambda|y-x_j^-|)^{\frac{N+2m}{2}+\tau}}
  \Big)\bigg)^{-1}\lambda^{-\frac{N+2m}{2}}|f(y)|,
\end{equation*}
where $\tau=\frac{N-4m-\nu}{N-2m-\nu}$. 
For $j=1,2,\cdots,k$, denote
\begin{equation*}
  Z_{j,2}^{\pm}=\frac{\partial Z_{x_j^\pm,\lambda}}{\partial \lambda},\quad Z_{j,3}^{\pm}=\frac{\partial Z_{x_j^\pm,\lambda}}{\partial \bar{r}},\quad Z_{j,l}^{\pm}=\frac{\partial Z_{x_j^\pm,\lambda}}{\partial \bar{y}_l''},\quad l=4,5,\cdots,N.
\end{equation*}

For later calculations, we divide $\mathbb{R}^N$ into $k$ parts, for $j=1,2,\cdots,k$, define
\begin{align*}
  \Omega_j:=\bigg\{y:y=(y_1,y_2,y_3,y'')\in \mathbb{R}^3\times \mathbb{R}^{N-3},
  \Big\langle\frac{(y_1,y_2)}{|(y_1,y_2)|},\Big(\cos \frac{2(j-1)\pi}{k},\sin \frac{2(j-1)\pi}{k}\Big)\Big\rangle_{\mathbb{R}^2}\geq \cos \frac{\pi}{k}\bigg\},
\end{align*}
where $\langle,\rangle_{\mathbb{R}^2}$ denotes the dot product in $\mathbb{R}^2$. For $\Omega_j$, we further divide it into two separate parts
\begin{equation*}
  \Omega_j^+:=\big\{y:y=(y_1,y_2,y_3,y'')\in \Omega_j,y_3\geq0\big\},
\end{equation*}
\begin{equation*}
  \Omega_j^-:=\big\{y:y=(y_1,y_2,y_3,y'')\in \Omega_j,y_3<0\big\}.
\end{equation*}

We also define the constrained space
\begin{equation*}
  \mathbb{H}:=\bigg\{v:v\in H_s,\int_{\mathbb{R}^N}Z_{x_j^+,\lambda}^{m^*-2}Z_{j,l}^+vdy=0,
  \int_{\mathbb{R}^N}Z_{x_j^-,\lambda}^{m^*-2}Z_{j,l}^-vdy=0,\ \ j=1,2,\cdots,k,\ \ l=2,3,\cdots,N
  \bigg\}.
\end{equation*}
Consider the following linearized problem 
\begin{align}\label{lp}
 \left\{
  \begin{array}{ll}
(-\Delta)^m \phi+V(r,y'')\phi-(m^*-1)Q(r,y'')Z_{\bar{r},\bar{h},\bar{y}'',\lambda}^{m^*-2}\phi\\
  \ \ \ =f+\sum\limits_{l=2}^Nc_l\sum\limits_{j=1}^k\Big
  (Z_{x_j^+,\lambda}^{m^*-2}Z_{j,l}^++Z_{x_j^-,\lambda}^{m^*-2}Z_{j,l}^-\Big),
  \quad  \mbox{in $\mathbb{R}^N$},\\
  \phi\in \mathbb{H},
    \end{array}
    \right.
\end{align}
for some real numbers $c_l$. 

In the sequel of this section, we assume that $(\bar{r},\bar{h},\bar{y}'')$ satisfies \eqref{case1}. Then, we have

\begin{lemma}\label{xian}
Assume that $\phi_k$ solves \eqref{lp} for $f=f_k$. If $\|f_k\|_{**}$ goes to zero as $k$ goes to infinity, so does $\|\phi_k\|_*$.
\end{lemma}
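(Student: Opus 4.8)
The plan is to prove Lemma~\ref{xian} by a contradiction/blow-up argument combined with the standard a priori estimates for the polyharmonic operator in the weighted $\|\cdot\|_*$ and $\|\cdot\|_{**}$ norms. Suppose the conclusion fails: there is a sequence $k\to\infty$, functions $f_k$ with $\|f_k\|_{**}\to 0$, solutions $\phi_k\in\mathbb H$ of \eqref{lp}, and constants $c_l^{(k)}$, such that (after normalizing) $\|\phi_k\|_*=1$ for all $k$. The goal is to derive a contradiction by showing $\|\phi_k\|_*\to 0$.

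First I would derive a pointwise representation for $\phi_k$. Writing the equation as $(-\Delta)^m\phi_k + V\phi_k = (m^*-1)Q Z_{\bar r,\bar h,\bar y'',\lambda}^{m^*-2}\phi_k + f_k + \sum_l c_l^{(k)}\sum_j(\cdots)$, and using that $(-\Delta)^m + V$ has a positive Green's function $G$ comparable to the Green's function of $(-\Delta)^m$ (decaying like $|y-z|^{-(N-2m)}$), convolution against $G$ gives
\begin{align*}
  |\phi_k(y)| \le C\int_{\mathbb R^N} \frac{1}{|y-z|^{N-2m}}\Big( Z^{m^*-2}_{\bar r,\bar h,\bar y'',\lambda}(z)|\phi_k(z)| + |f_k(z)| + \Big|\sum_l c_l^{(k)}\sum_j(\cdots)\Big|\Big)\,dz.
\end{align*}
The three basic convolution estimates I would invoke (these are exactly the kind of computations placed in Appendix~\ref{AppA}) are: (i) $\int G(y,z) Z^{m^*-2}_{\cdot}(z)\,(\text{single bubble weight})\,dz$ is bounded, with a small constant, by the sum of single-bubble weights appearing in $\|\cdot\|_*$ — this uses $N>4m+1$ and the choice $\tau=\frac{N-4m-\nu}{N-2m-\nu}$ so that $\frac{N-2m}{2}+\tau<N-2m$; (ii) the same with the $\|\cdot\|_{**}$ weights for the $f_k$ term; (iii) the estimate of $\int G(y,z)\sum_j Z^{m^*-2}_{x_j^\pm,\lambda}Z_{j,l}^\pm\,dz$ in terms of $\lambda$-powers times the $\|\cdot\|_*$ weight. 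Combining these yields $\|\phi_k\|_*\le o(1)\|\phi_k\|_* + C\|f_k\|_{**} + C\sum_l|c_l^{(k)}|\lambda^{-\sigma_l}$ for suitable exponents $\sigma_l$, so the crux reduces to showing the multipliers $c_l^{(k)}$ are themselves controlled: $\lambda^{-\sigma_l}|c_l^{(k)}| = o(\|\phi_k\|_*) + O(\|f_k\|_{**})$.

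To estimate $c_l^{(k)}$ I would test \eqref{lp} against $Z_{j,l}^{\pm}$ (or rather $Z_{x_j^\pm,\lambda}^{m^*-2}Z_{j,l}^\pm$), use the near-orthogonality relations among the $Z_{i,l'}^\pm$ for different $(i,l')$, the orthogonality constraint $\phi_k\in\mathbb H$, and the fact that $Z^{m^*-2}_{\bar r,\bar h,\bar y'',\lambda}$ differs from $\sum_j Z_{x_j^\pm,\lambda}^{m^*-2}$ by controllably small interaction terms; this yields a nearly-diagonal linear system for the $c_l^{(k)}$ whose right-hand side is $O(\|f_k\|_{**}) + o(\|\phi_k\|_*)$, giving the needed bound on each $c_l^{(k)}$. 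Feeding this back gives $\|\phi_k\|_* \le o(1) + C\|f_k\|_{**}\to 0$, already a contradiction with $\|\phi_k\|_*=1$ — but to be safe one also runs the blow-up: set $\hat\phi_k^j(y) = \lambda_k^{-\frac{N-2m}{2}}\phi_k(x_j^+ + y/\lambda_k)$, which is bounded in $L^\infty_{loc}$, so along a subsequence it converges in $C^{2m}_{loc}$ to a bounded $\hat\phi$ solving the limiting linearized equation $(-\Delta)^m\hat\phi = (m^*-1)U_{0,1}^{m^*-2}\hat\phi$ on $\mathbb R^N$; by the nondegeneracy of $U_{0,1}$ (all bounded solutions are spanned by $\partial_\lambda U_{0,1}$ and the translations $\partial_{x_i}U_{0,1}$), together with the limiting orthogonality conditions inherited from $\phi_k\in\mathbb H$, one concludes $\hat\phi\equiv 0$, and then a standard argument (the pointwise bound localizes the mass of $\phi_k$ near the bubbles) upgrades this to $\|\phi_k\|_*\to 0$.

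The main obstacle I expect is the bookkeeping in the convolution estimate (i): making the constant in front of $\|\phi_k\|_*$ genuinely $o(1)$ (rather than merely $O(1)$) as $k\to\infty$, which requires carefully splitting $\mathbb R^N$ into the regions $\Omega_j^\pm$ defined in the text, treating the "near" region $|y-x_j^\pm|\le \delta/\lambda$ or so separately from the "far" region, and summing the mutual-interaction contributions of the $2k$ bubbles — here the constraint $N>4m+1$, the interval for $\lambda$ (of order $k^{(N-2m)/(N-4m-\nu)}$), and the precise value of $\tau$ are all used to make the $k$-sum of cross terms converge to something small. A secondary delicate point is that the Green's function of $(-\Delta)^m+V$ with $V$ only bounded and nonnegative (not necessarily decaying) still satisfies the global upper bound $G(y,z)\le C|y-z|^{-(N-2m)}$; this can be obtained by comparison with the polyharmonic Green's function since $V\ge 0$, but one must be a little careful because $(-\Delta)^m$ lacks a maximum principle in general — in practice one works with the bilinear form / the $\|\cdot\|_*$-$\|\cdot\|_{**}$ duality directly rather than with a literal Green's function, which is presumably how the appendix is set up.
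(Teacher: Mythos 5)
Your overall plan --- argue by contradiction with the normalization $\|\phi_k\|_*=1$, pass to a Green's-function representation, use the weighted convolution estimates, control the multipliers $c_l^{(k)}$ by testing against the $Z_{j,l}^\pm$, and finish with a blow-up plus the nondegeneracy of $U_{0,1}$ --- is the paper's argument (the paper writes down exactly the Green's-function inequality and then cites \cite{CW}, which runs the standard del Pino--Felmer--Musso / Wei--Yan scheme). But the intermediate claim
\[
\|\phi_k\|_*\le o(1)\,\|\phi_k\|_* + C\|f_k\|_{**} + C\sum_l|c_l^{(k)}|\lambda^{-\sigma_l},
\]
and the ensuing ``already a contradiction,'' is not correct --- and if it were, the blow-up would be unnecessary. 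The convolution of $Z^{m^*-2}_{\bar r,\bar h,\bar y'',\lambda}|\phi_k|$ against $|y-z|^{-(N-2m)}$ improves the \emph{decay exponent}: schematically it is bounded by $\|\phi_k\|_*\sum_j(1+\lambda|y-x_j^\pm|)^{-(\frac{N-2m}{2}+\tau+\theta)}$ for some $\theta>0$. After dividing by the $*$-weight, the gained factor $(1+\lambda|y-x_j^\pm|)^{-\theta}$ is small only when $y$ is far from every bubble; near a bubble it is $O(1)$, so no amount of careful splitting into the $\Omega_j^\pm$ will make the coefficient of $\|\phi_k\|_*$ uniformly $o(1)$. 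What the convolution estimate actually yields is that for each fixed large $R$ and all $k$ large, the quantity inside the sup defining $\|\phi_k\|_*$ is $<1/2$ on the far region $\min_j\lambda|y-x_j^\pm|\ge R$; hence, given the normalization, the sup is attained at some $y_k$ with $|y_k-x_{j_k}^\pm|\le R/\lambda_k$. The blow-up $\hat\phi_k(y)=\lambda_k^{-\frac{N-2m}{2}}\phi_k(x_{j_k}^\pm+y/\lambda_k)$, converging to a bounded solution of $(-\Delta)^m\hat\phi=(m^*-1)U_{0,1}^{m^*-2}\hat\phi$ that vanishes by nondegeneracy together with the limiting orthogonality relations, is therefore the step that actually produces the contradiction, not a safety net. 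With that logical correction --- the convolution estimate localizes the sup, it does not by itself make it small --- your proposal is the paper's proof.

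Your secondary remark about the Green's function of $(-\Delta)^m+V$ is a fair one. For $m\ge2$ there is no maximum principle to compare it with the pure polyharmonic Green's function, and for $4m+1<N<6m$ one cannot simply move $V\phi$ to the right and convolve, since $\int|y-z|^{-(N-2m)}(1+\lambda|z-x_j^\pm|)^{-(\frac{N-2m}{2}+\tau)}\,dz$ diverges once $\frac{N-2m}{2}+\tau\le 2m$. The paper writes the pointwise inequality with the kernel $|y-z|^{-(N-2m)}$ and no $V\phi$ term on the right, deferring the justification to \cite{CW}; this is a point worth making explicit if you write out the proof.
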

\begin{proof}
Assume by contradiction that there exist $k\rightarrow\infty$, $\lambda_k\in \big[L_0k^{\frac{N-2m}{N-4m-\nu}},L_1k^{\frac{N-2m}{N-4m-\nu}}\big]$, $(\bar{r}_k,\bar{h}_k,\bar{y}_k'')$ satisfying \eqref{case1} and $\phi_k$ solving \eqref{lp} for $f=f_k$, $\lambda=\lambda_k$, $\bar{r}=\bar{r}_k$, $\bar{h}=\bar{h}_k$, $\bar{y}''=\bar{y}_k''$ with $\|f_k\|_{**}\rightarrow 0$ and $\|\phi_k\|_* \geq C>0$. Without loss of generality, we assume that $\|\phi_k\|_*=1$. For simplicity, we drop the subscript $k$.

From \eqref{lp}, we have
\begin{align*}
  |\phi(y)|\leq &C \int_{\mathbb{R}^N}\frac{1}{|y-z|^{N-2m}}Z_{\bar{r},\bar{h},\bar{y}'',\lambda}^{m^*-2}(z)|\phi(z)|dz+C \int_{\mathbb{R}^N}\frac{1}{|y-z|^{N-2m}}|f(z)|dz\\
  &+C \int_{\mathbb{R}^N}\frac{1}{|y-z|^{N-2m}}\Big|\sum\limits_{l=2}^Nc_l\sum\limits_{j=1}^k\Big
  (Z_{x_j^+,\lambda}^{m^*-2}Z_{j,l}^++Z_{x_j^-,\lambda}^{m^*-2}Z_{j,l}^-\Big)\Big|dz.
\end{align*}
The rest of the proof is similar to
\cite[Lemma 2.1]{CW}, so we omit it.
\end{proof}

From Lemma \ref{xian}, applying the same argument as the proof of \cite[Proposition 4.1]{dFM}, we can prove the following result.
\begin{lemma}\label{dc}
There exists an integer $k_0>0$, such that for any $k\geq k_0$ and $f\in L^\infty(\mathbb{R}^N)$, problem \eqref{lp} has a unique solution $\phi=L_k(f)$. Moreover,
\begin{equation*}
  \|L_k(f)\|_*\leq C\|f\|_{**},\quad |c_l|\leq \frac{C}{\lambda^{\eta_l}}\|f\|_{**},
\end{equation*}
where $\eta_2=-\beta_1$, $\beta_1=\frac{\nu}{N-2m}$, $\eta_l=1$ for $l=3,4,\cdots,N$.
\end{lemma}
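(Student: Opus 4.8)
The statement to prove is Lemma \ref{dc}: existence, uniqueness, and the norm estimates for the linear operator $L_k$ solving the constrained linearized problem \eqref{lp}. Here is the plan I would follow.

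\medskip

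\textbf{Strategy.} The plan is to set up \eqref{lp} as a solvable operator equation in the constrained space $\mathbb{H}$ via the Fredholm alternative and Riesz representation, exactly along the lines of \cite[Proposition 4.1]{dFM}. First I would rewrite \eqref{lp} in weak form: multiplying by a test function $v\in \mathbb{H}$ and integrating, the equation $\phi = L_k(f)$ is equivalent to finding $\phi\in\mathbb{H}$ with
\begin{equation*}
  \langle \phi, v\rangle_{H^m} - (m^*-1)\int_{\R^N} Q(r,y'') Z_{\bar r,\bar h,\bar y'',\lambda}^{m^*-2}\phi\, v\, dy = \langle f, v\rangle_{L^2},\quad \forall v\in\mathbb{H},
\end{equation*}
where the Lagrange-multiplier terms drop out because $v$ is orthogonal (in the $\int Z^{m^*-2}Z_{j,l}(\cdot)$ pairing) to all the $Z_{j,l}^\pm$. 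The first term defines the inner product on $H^m$; the second is a compact perturbation since $Z_{\bar r,\bar h,\bar y'',\lambda}^{m^*-2}$ multiplication followed by the Sobolev embedding $H^m\hookrightarrow L^{m^*}$ is compact on bounded domains and has small tail. So the operator on the left is Identity minus Compact on $\mathbb{H}$, and by the Fredholm alternative unique solvability for every right-hand side reduces to injectivity, i.e. to the a priori estimate $\|\phi\|_*\le C\|f\|_{**}$, which is precisely the content of Lemma \ref{xian} (applied with the right-hand side being the projection of $f$; note $f\in L^\infty$ and $\|f\|_{**}<\infty$ need to be reconciled, but for the reduction one only applies this with $f$ equal to the actual error and the finitely many correction terms, all of which have finite $\|\cdot\|_{**}$).

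\medskip

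\textbf{Key steps in order.} (1) Reformulate \eqref{lp} weakly on $\mathbb{H}$ and identify the bilinear form; verify the ``Identity $-$ Compact'' structure using the Sobolev embedding and the decay of $Z$. (2) Invoke the Fredholm alternative: a solution exists and is unique provided the homogeneous problem has only the trivial solution in $\mathbb{H}$; combine this with Lemma \ref{xian} (with $f=0$) to conclude injectivity, hence bijectivity, for $k\ge k_0$. (3) Derive the bound $\|L_k(f)\|_*\le C\|f\|_{**}$: argue by contradiction exactly as in Lemma \ref{xian}, or simply note that Lemma \ref{xian} together with the open mapping / uniform invertibility on the family already gives a uniform constant $C$ independent of $k$. (4) Estimate the multipliers $c_l$: test \eqref{lp} against $Z_{j,l}^\pm$ itself, so that the left-hand side pairs $\phi$ against $(-\Delta)^m Z_{j,l}^\pm + \dots$ and the right-hand side becomes a near-diagonal linear system $\sum_l c_l\, \langle \text{stuff}\rangle = \langle f, Z_{j,l}^\pm\rangle - \langle \text{linearized op applied to }\phi,\ Z_{j,l}^\pm\rangle$. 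Estimating each entry: the ``diagonal'' coefficient $\int Z_{x_j^\pm,\lambda}^{m^*-2}(Z_{j,l}^\pm)^2$ is of order $\lambda^{-2}$ for $l\ge 3$ (translation/parameter derivatives scale like $\lambda$ times a fixed profile, against $m^*-2$ power) and of a different order (giving the $\beta_1$ shift, $\eta_2=-\beta_1$) for $l=2$ because $\partial_\lambda$ carries an extra $\lambda^{-1}$; the off-diagonal terms are lower order by the interaction estimates in the Appendix; the right-hand side pairings are $O(\|f\|_{**}\cdot \lambda^{-\text{something}})$ by Hölder against the weighted norms. Solving the system and reading off the powers yields $|c_l|\le C\lambda^{-\eta_l}\|f\|_{**}$ with $\eta_2=-\beta_1$, $\eta_l=1$ otherwise.

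\medskip

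\textbf{Main obstacle.} The genuinely delicate point is step (4), the sharp estimate of the constants $c_l$, and in particular getting the exponents $\eta_l$ exactly right — this is where the splitting into regions $\Omega_j^\pm$, the precise decay rates of $U_{x_j^\pm,\lambda}$ and of its $\lambda$-, $\bar r$-, $\bar y''$-derivatives, and the mutual-interaction estimates from the Appendix all have to be combined carefully, and where the hypothesis $N>4m+1$ (via the choice of $\tau$ and $\nu$) is used to keep all tail integrals convergent and the error genuinely small. The existence/uniqueness part, by contrast, is essentially soft once Lemma \ref{xian} is in hand; the proof of that a priori estimate (which the excerpt defers to \cite[Lemma 2.1]{CW}) is itself a blow-up argument showing a nontrivial limit would solve the nondegenerate limiting equation $(-\Delta)^m\phi=(m^*-1)U_{0,1}^{m^*-2}\phi$ on $\R^N$ in the space orthogonal to the kernel $\{\partial_\lambda U, \partial_{x_i} U\}$, forcing $\phi\equiv 0$, a contradiction. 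So I would present (1)–(3) briefly as a standard Fredholm argument citing \cite{dFM} and Lemma \ref{xian}, and devote the real work to the bookkeeping in (4).
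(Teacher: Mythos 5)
Your strategy is essentially the one the paper uses: the paper's ``proof'' of Lemma~\ref{dc} is a single sentence deferring to Lemma~\ref{xian} and to \cite[Proposition~4.1]{dFM}, which is precisely the Fredholm-alternative-plus-a-priori-estimate scheme you describe (weak formulation on $\mathbb{H}$, Identity~$-$~Compact, injectivity from Lemma~\ref{xian}, then read off $c_l$ from a near-diagonal linear system obtained by testing against $Z_{j,l}^\pm$). Steps (1)--(3) are correct and match.

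One warning on step (4), which you yourself flag as the delicate part. Your heuristic for $\eta_2$ --- ``because $\partial_\lambda$ carries an extra $\lambda^{-1}$'' --- does not actually produce the stated exponent $\eta_2=-\beta_1$. If $|Z_{j,2}^\pm|\sim\lambda^{-1}Z_{x_j^\pm,\lambda}$, the diagonal entry $\int Z^{m^*-2}(Z_{j,2}^\pm)^2\sim\lambda^{-2}$ and the right-hand-side pairing $\int fZ_{j,2}^\pm\sim\lambda^{-1}\|f\|_{**}$ would give $|c_2|\lesssim\lambda\|f\|_{**}$, i.e.\ $\eta_2=-1$, not $-\beta_1$. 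The correct mechanism is Lemma~\ref{AppA4}: the points $x_j^\pm$ depend on $\lambda$ through $\bar h$ (in Case~1, $\sqrt{1-\bar h^2}=M_1\lambda^{-\beta_1}+o(\lambda^{-\beta_1})$), so the total $\lambda$-derivative is dominated by the chain-rule term $\lambda Z\cdot\partial_\lambda\sqrt{1-\bar h^2}\sim\lambda^{-\beta_1}Z$, which for $\beta_1<1$ is \emph{larger} than the naive $\lambda^{-1}Z$. Running your own step-(4) bookkeeping with this replacement does yield $|c_2|\lesssim\lambda^{\beta_1}\|f\|_{**}$, i.e.\ $\eta_2=-\beta_1$, and likewise $\eta_l=1$ for $l\ge3$ since $Z_{j,l}^\pm\sim\lambda Z$ there. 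So the approach is sound, but the $l=2$ scaling must come from Lemma~\ref{AppA4} rather than from the bare $\partial_\lambda$ of the fixed-center bubble.
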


Now, we consider the following problem
\begin{align}\label{pp}
 \left\{
  \begin{array}{ll}
  \ \ \ (-\Delta)^m (Z_{\bar{r},\bar{h},\bar{y}'',\lambda}+\phi)+V(r,y'')(Z_{\bar{r},\bar{h},\bar{y}'',\lambda}+\phi)\\
  =Q(r,y'')(Z_{\bar{r},\bar{h},\bar{y}'',\lambda}+\phi)_+^{m^*-1}+\sum\limits_{l=2}^Nc_l\sum\limits_{j=1}^k\Big
  (Z_{x_j^+,\lambda}^{m^*-2}Z_{j,l}^++Z_{x_j^-,\lambda}^{m^*-2}Z_{j,l}^-\Big),
  \quad  \mbox{in $\mathbb{R}^N$},\\
  \phi\in \mathbb{H}.
    \end{array}
    \right.
\end{align}

The main result of this section is as follows.
\begin{proposition}\label{fixed}
There exists an integer $k_0>0$, such that for any $k\geq k_0$, $\lambda\in \big[L_0k^{\frac{N-2m}{N-4m-\nu}},L_1k^{\frac{N-2m}{N-4m-\nu}}\big]$, $(\bar{r},\bar{h},\bar{y}'')$ satisfies \eqref{case1}, problem \eqref{pp} has a unique solution $\phi=\phi_{\bar{r},\bar{h},\bar{y}'',\lambda}$ satisfying
\begin{equation*}
  \|\phi\|_*\leq C\big(\frac{1}{\lambda}\big)^{\frac{2m+1-\beta_1}{2}+\varepsilon},\quad |c_l|\leq C\big(\frac{1}{\lambda}\big)^{\frac{2m+1-\beta_1}{2}+\eta_l+\varepsilon},
\end{equation*}
where $\varepsilon>0$ is a small constant.
\end{proposition}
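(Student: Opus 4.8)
The plan is to solve \eqref{pp} by a fixed point argument, rewriting it as an equation for $\phi$ in the constrained space $\mathbb{H}$ and applying the contraction mapping principle, using the invertibility of the linearized operator established in Lemma \ref{dc}. First I would rewrite \eqref{pp} in the form
\begin{equation*}
  \phi = L_k\big(N(\phi) + l_k\big) =: \mathcal{A}(\phi),
\end{equation*}
where $L_k$ is the inverse operator from Lemma \ref{dc},
\begin{equation*}
  l_k = Q(r,y'')Z_{\bar{r},\bar{h},\bar{y}'',\lambda}^{m^*-1} - \big((-\Delta)^m + V(r,y'')\big)Z_{\bar{r},\bar{h},\bar{y}'',\lambda}
\end{equation*}
is the error term (the discrepancy of the approximate solution), and
\begin{equation*}
  N(\phi) = Q(r,y'')\Big[(Z_{\bar{r},\bar{h},\bar{y}'',\lambda}+\phi)_+^{m^*-1} - Z_{\bar{r},\bar{h},\bar{y}'',\lambda}^{m^*-1} - (m^*-1)Z_{\bar{r},\bar{h},\bar{y}'',\lambda}^{m^*-2}\phi\Big]
\end{equation*}
is the superlinear remainder.

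The key steps, in order, are: (i) invoke the error estimate (Lemma \ref{err}, referenced in the excerpt) to get $\|l_k\|_{**} \leq C(1/\lambda)^{\frac{2m+1-\beta_1}{2}+\varepsilon}$; (ii) establish the estimate $\|N(\phi)\|_{**} \leq C\|\phi\|_*^{\min\{m^*-1,2\}}$ and the Lipschitz bound $\|N(\phi_1)-N(\phi_2)\|_{**} \leq C(\|\phi_1\|_*+\|\phi_2\|_*)^{\min\{m^*-2,1\}}\|\phi_1-\phi_2\|_*$, both following from the pointwise elementary inequality $|(a+b)_+^{p-1}-a^{p-1}-(p-1)a^{p-2}b| \leq C\min\{|b|^{p-1},a^{p-2}|b|^2\}$ together with the weighted norm bookkeeping for sums of bubbles (as in the appendix estimates); (iii) combine (i), (ii) with $\|L_k(f)\|_* \leq C\|f\|_{**}$ from Lemma \ref{dc} to show that $\mathcal{A}$ maps the ball
\begin{equation*}
  \mathcal{B} = \Big\{\phi \in \mathbb{H} : \|\phi\|_* \leq \rho_k\Big\}, \qquad \rho_k = C\big(\tfrac{1}{\lambda}\big)^{\frac{2m+1-\beta_1}{2}+\varepsilon}
\end{equation*}
into itself and is a contraction there, for $k$ large; (iv) conclude existence and uniqueness of the fixed point $\phi = \phi_{\bar{r},\bar{h},\bar{y}'',\lambda}$, and read off the bound on $\|\phi\|_*$; (v) obtain the bound on $|c_l|$ by applying the second estimate of Lemma \ref{dc} to $f = N(\phi)+l_k$, using that $\|N(\phi)+l_k\|_{**} \leq C\rho_k$.

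The main obstacle I expect is step (ii): getting the nonlinear estimates in the $\|\cdot\|_{**}$ norm requires careful handling of the cross terms between different bubbles $U_{x_j^\pm,\lambda}$, which are close together (separated by distances $\sim 1/\lambda \cdot \lambda^{\beta_1}$ when $\bar h\to 1$, since the top and bottom circles nearly coincide) and the interaction sums $\sum_j (1+\lambda|y-x_j^\pm|)^{-\frac{N-2m}{2}-\tau}$ must be controlled uniformly in $k$; the exponent $\tau = \frac{N-4m-\nu}{N-2m-\nu}$ and the restriction $N>4m+1$ are precisely what make these sums summable. This is where the delicate interplay between $N$, $m$, $\nu$, and $k$ enters, and where one leans on the basic estimates collected in Appendix \ref{AppA}. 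The remaining steps are standard once the norms are set up, and the argument is parallel to the $m=1$ treatments in \cite{HWW1,DMW} and the $Q\equiv1$ polyharmonic case in \cite{CW}, so I would cite those for the routine parts while carrying out the error and nonlinear estimates in detail.
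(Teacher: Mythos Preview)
Your proposal is correct and follows essentially the same route as the paper: the authors rewrite \eqref{pp} as the fixed point equation $\phi=T(\phi):=L_k(N(\phi)+E_k)$ (your $l_k$ is exactly their $E_k$), invoke Lemma~\ref{dc} for the bound on $L_k$, Lemma~\ref{non} for $\|N(\phi)\|_{**}\leq C\|\phi\|_*^{\min\{2,m^*-1\}}$, Lemma~\ref{err} for the error bound, and then run the contraction mapping argument on a ball in $\mathbb{H}$, deducing the estimates on $\|\phi\|_*$ and $|c_l|$ exactly as you outline. The only cosmetic difference is that the paper takes the ball with the slightly larger radius $C(1/\lambda)^{\frac{2m+1-\beta_1}{2}}$ (without the $+\varepsilon$), which makes the self-mapping step marginally cleaner, and they split the Lipschitz estimate for $N$ into the two cases $N\geq 6m$ and $4m+1<N\leq 6m-1$ rather than writing a single $\min\{m^*-2,1\}$ exponent; neither point affects the substance of the argument.
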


Rewrite \eqref{pp} as
\begin{align}\label{repp}
 \left\{
  \begin{array}{ll}
 \ \ \ (-\Delta)^m \phi+V(r,y'')\phi-(m^*-1)Q(r,y'')Z_{\bar{r},\bar{h},\bar{y}'',\lambda}^{m^*-2}\phi\\
  =N(\phi)+E_k+\sum\limits_{l=2}^Nc_l\sum\limits_{j=1}^k\Big
  (Z_{x_j^+,\lambda}^{m^*-2}Z_{j,l}^++Z_{x_j^-,\lambda}^{m^*-2}Z_{j,l}^-\Big),
  \quad  \mbox{in $\mathbb{R}^N$},\\
  \phi\in \mathbb{H},
    \end{array}
    \right.
\end{align}
where
\begin{equation*}
  N(\phi)=Q(r,y'')\Big((Z_{\bar{r},\bar{h},\bar{y}'',\lambda}+\phi)_+^{m^*-1}-Z_{\bar{r},\bar{h},\bar{y}'',\lambda}^{m^*-1}-(m^*-1)Z_{\bar{r},\bar{h},\bar{y}'',\lambda}^{m^*-2}\phi\Big),
\end{equation*}
and
\begin{align*}
  E_k=&\underbrace{Q(r,y'')Z_{\bar{r},\bar{h},\bar{y}'',\lambda}^{m^*-1}-\sum\limits_{j=1}^k\Big(\xi U_{x_j^+,\lambda}^{m^*-1}+\xi U_{x_j^-,\lambda}^{m^*-1}\Big)}_{:=I_1}-\underbrace{V(r,y'')Z_{\bar{r},\bar{h},\bar{y}'',\lambda}}_{:=I_2}-\underbrace{\sum\limits_{l=0}^{m-1}\binom m l (-\Delta)^{m-l}\xi(-\Delta)^lZ^*_{\bar{r},\bar{h},\bar{y}'',\lambda}}_{:=I_3}\\
  &+\underbrace{2m \sum\limits_{l=0}^{m-1}\binom {m-1} l \nabla \big((-\Delta)^{m-l-1}\xi\big)\cdot \nabla \big((-\Delta)^lZ^*_{\bar{r},\bar{h},\bar{y}'',\lambda}\big)}_{:=I_4}\\
  &+\underbrace{\sum\limits_{l=1}^{m-1}a_l\sum\limits^N_{i_1,i_2,\cdots,i_{m-l}=1}\sum\limits_{s=0}^{l-1}\binom {l-1} s \nabla\Big(\frac{\partial ^{m-l}\big((-\Delta)^sZ^*_{\bar{r},\bar{h},\bar{y}'',\lambda}\big)}{\partial y_{i_{m-l}}\cdots \partial y_{i_1}}\Big)\cdot \nabla \Big(\frac{\partial ^{m-l}\big((-\Delta)^{l-s-1}\xi\big)}{\partial y_{i_{m-l}}\cdots \partial y_{i_1}}\Big)}_{:=I_5},
\end{align*}
where $a_l$, $l=1,2,\cdots,m-1$ are some constants.

In the following, we will make use of the contraction mapping theorem to prove that \eqref{repp} is uniquely solvable under the condition that $\|\phi\|_*$ is small enough, so we need to estimate $N(\phi)$ and $E_k$, respectively.

\begin{lemma}\label{non}
If $N>4m+1$, then
\begin{equation*}
  \|N(\phi)\|_{**}\leq C\|\phi\|_*^{\min\{2,m^*-1\}}.
\end{equation*}
\end{lemma}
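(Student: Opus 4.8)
The plan is to prove the pointwise estimate $\|N(\phi)\|_{**}\le C\|\phi\|_*^{\min\{2,m^*-1\}}$ by an elementary analysis of the nonlinearity
\[
g(t)=t_+^{m^*-1},\qquad N(\phi)=Q(r,y'')\big(g(Z+\phi)-g(Z)-g'(Z)\phi\big),
\]
where I abbreviate $Z=Z_{\bar r,\bar h,\bar y'',\lambda}$. The starting point is the calculus inequality
\[
\big|g(a+b)-g(a)-g'(a)b\big|\le
\begin{cases}
C\,|b|^{m^*-1}, & m^*-1\le 2,\\[2pt]
C\big(|a|^{m^*-3}b^2+|b|^{m^*-1}\big), & m^*-1>2,
\end{cases}
\]
valid for all real $a$ and $b$ (the cut-off $\xi$ and the positivity truncation only help, since $0\le Z\le Z^*$ and $g$ is handled via $t_+$). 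Since $Q$ is bounded by $(A_1)$, it suffices to bound the corresponding expressions in the $\|\cdot\|_{**}$ norm.

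The main step is therefore to estimate, in the $\|\cdot\|_{**}$ norm, the two model quantities $|\phi|^{m^*-1}$ and (when $m^*-1>2$) $Z^{m^*-3}\phi^2$. For the first, I would use the definition of $\|\phi\|_*$ to write
\[
|\phi(y)|\le \|\phi\|_*\,\lambda^{\frac{N-2m}{2}}\sum_{j=1}^k\Big(\frac{1}{(1+\lambda|y-x_j^+|)^{\frac{N-2m}{2}+\tau}}+\frac{1}{(1+\lambda|y-x_j^-|)^{\frac{N-2m}{2}+\tau}}\Big),
\]
raise to the power $m^*-1=\frac{N+2m}{N-2m}$, and compare with the weight defining $\|\cdot\|_{**}$, namely $\lambda^{\frac{N+2m}{2}}\sum_j\big((1+\lambda|y-x_j^+|)^{-\frac{N+2m}{2}-\tau}+(\cdots)^-\big)$. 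The powers of $\lambda$ match exactly ($\frac{N-2m}{2}\cdot\frac{N+2m}{N-2m}=\frac{N+2m}{2}$), so what remains is the purely geometric statement that
\[
\Big(\sum_j \frac{1}{(1+\lambda|y-x_j^\pm|)^{\frac{N-2m}{2}+\tau}}\Big)^{m^*-1}\le C\sum_j\frac{1}{(1+\lambda|y-x_j^\pm|)^{\frac{N+2m}{2}+\tau}},
\]
which follows from the standard separation estimate for the configuration points $x_j^\pm$ (they are uniformly separated on the scale $1/\lambda$ after the geometric setup in \eqref{case1}); this is exactly the kind of lemma collected in Appendix \ref{AppA}, and I would invoke it directly. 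The term $Z^{m^*-3}\phi^2$ is handled the same way: bound $Z\le Z^*=\sum_j(U_{x_j^+,\lambda}+U_{x_j^-,\lambda})$, which in $\|\cdot\|_*$-type language is $\le C\lambda^{\frac{N-2m}{2}}\sum_j(1+\lambda|y-x_j^\pm|)^{-(N-2m)}$, multiply by $\phi^2$ using $\|\phi\|_*^2$, check that $\lambda^{\frac{N-2m}{2}(m^*-3)+\,(N-2m)}=\lambda^{\frac{N+2m}{2}}$, and then reduce again to a cross-term estimate of the form $\big(\sum_j a_j^{m^*-3}\big)\big(\sum_j a_j\big)^2\le C\sum_j a_j^{?}$ with the exponents chosen so that the decay rate at least matches $\frac{N+2m}{2}+\tau$; the room to absorb the extra $\tau$ comes from $N>4m+1$, which is where that hypothesis enters.

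The part I expect to require the most care is bookkeeping the exponents in the cross terms when $m^*-1>2$ (equivalently $N<6m$): one has to verify that distributing $Z^{m^*-3}$ among the different bubbles $U_{x_i^\pm,\lambda}$ and $\phi^2$ among $U_{x_j^\pm,\lambda}$, $U_{x_{j'}^\pm,\lambda}$ never produces a tail decaying slower than $\frac{N+2m}{2}+\tau$ in any of the $k$ regions $\Omega_j^\pm$, and that the resulting $k$-dependent constants stay bounded. This is where the precise value $\tau=\frac{N-4m-\nu}{N-2m-\nu}$ and the range of $\lambda$ are used, via the appendix estimates on $\sum_{i\ne j}(1+\lambda|x_i^\pm-x_j^\pm|)^{-\sigma}$. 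Modulo that bookkeeping, which parallels the corresponding step in \cite{CW}, the proof is a direct chain of the two model estimates above followed by the elementary nonlinearity inequality, and I would present it in that order: state the calculus inequality, reduce to the two model quantities, and finish each with the appendix separation lemmas.
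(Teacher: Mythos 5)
Your plan is correct and follows the standard route. The paper does not actually write out a proof of this lemma---it defers in one sentence to \cite[Lemma 2.3]{CW}---but the steps you describe (the elementary second-order Taylor bound for $t\mapsto t_+^{m^*-1}$, split according to whether $m^*-1\leq 2$ or $m^*-1>2$, i.e.\ $N\geq 6m$ or $4m+1<N<6m$, followed by weighted-norm bookkeeping using the H\"older inequality and the separation estimates of Appendix \ref{AppA}) are exactly the same mechanism the present paper deploys in its own contraction estimate inside the proof of Proposition \ref{fixed}, and they match what is carried out in detail in \cite{CW}. You also correctly flag the only nontrivial point, the exponent accounting for the term $Z^{m^*-3}\phi^2$ when $N<6m$, where the budget $4m+2\tau\geq \tfrac{N+2m}{2}+\tau$ is precisely what makes the estimate close.

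One minor imprecision worth correcting in your write-up: the points $x_j^{\pm}$ are not separated ``on the scale $1/\lambda$.'' Their pairwise distances are of order $\sqrt{1-\bar h^2}/k$, which is much larger than $1/\lambda$, and the exponent $\tau=\frac{N-4m-\nu}{N-2m-\nu}$ is tuned precisely so that $\sum_{j\geq 2}(\lambda|x_j^{\pm}-x_1^+|)^{-\tau}=O(1)$ (this is Lemma \ref{AppA5} combined with the scaling $k\sim\lambda^{(N-4m-\nu)/(N-2m)}$). Since you defer to the appendix lemmas for exactly this fact, it is only a misleading turn of phrase and not a gap in the argument.
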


\begin{proof}
The proof is similar as that of \cite[Lemma 2.3]{CW}, we omit it here.
  \end{proof}

  \begin{lemma}\label{err}
If $N>4m+1$, then there exists a small constant $\varepsilon>0$ such that
\begin{equation*}
  \|E_k\|_{**}\leq C\big(\frac{1}{\lambda}\big)^{\frac{2m+1-\beta_1}{2}+\varepsilon}.
\end{equation*}
\end{lemma}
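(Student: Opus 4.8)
\textbf{Proof proposal for Lemma \ref{err}.}

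The plan is to estimate the $\|\cdot\|_{**}$ norm of each of the five pieces $I_1,\dots,I_5$ appearing in the decomposition of $E_k$, and then combine. The dominant contributions will come from $I_1$ (the error from replacing the sum of bubbles by a single power and the correction coming from the nontrivial $Q$) and from $I_2$ (the linear term $V Z$), while $I_3$, $I_4$, $I_5$ are all supported in the annulus $\{\delta\le |(r,y'')-(r_0,y_0'')|\le 2\delta\}$ where $\xi$ and its derivatives are nonzero, hence are exponentially small in $\lambda$ (they live far from every concentration point $x_j^\pm$) and contribute a harmless $O(\lambda^{-A})$ for any $A$. So the real work is $I_1$ and $I_2$.

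For $I_1$, I would first split $\mathbb{R}^N$ into the regions $\Omega_j^{\pm}$ and work in a fixed $\Omega_j^+$ (the others being symmetric). In $\Omega_j^+$ the bubble $U_{x_j^+,\lambda}$ is the ``main'' one, and one writes $Q(r,y'') Z^{m^*-1} - \xi U_{x_j^+,\lambda}^{m^*-1} - (\text{other }\xi U^{m^*-1})$. Three sub-errors arise: (a) $Q(r,y'')-1$, which is where the hypothesis $D^\alpha Q(r_0,y_0'')=0$ for $|\alpha|\le 2m-1$ enters — near $x_j^+$ one has $Q(r,y'')-Q(r_0,y_0'')=O(|(r,y'')-(r_0,y_0'')|^{2m})$ by Taylor expansion, and combined with $|(\bar r,\bar y'')-(r_0,y_0'')|\le \lambda^{-(1-\vartheta)}$ and the decay of $U_{x_j^+,\lambda}$ this produces a factor $\lambda^{-2m}$ times bubble-type decay, which is comfortably better than $\lambda^{-(2m+1-\beta_1)/2-\varepsilon}$ for $N$ large and $\beta_1$ small; (b) the cross terms $\big(\sum U_{x_i^\pm,\lambda}\big)^{m^*-1}-\sum U_{x_i^\pm,\lambda}^{m^*-1}$, estimated by the standard interaction estimates for bubbles on the circles (distance between neighbouring points $\sim \bar r/k$, distance between the top and bottom circles $\sim \bar r\bar h$), using the Appendix estimates; (c) the difference between $\xi U^{m^*-1}$ and $U^{m^*-1}$, again supported away from concentration points and exponentially small. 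The condition $N>4m+1$ is exactly what makes the exponent $\beta_1=\nu/(N-2m)$ with $\nu=N-4m-\iota$ admit a positive $\iota$, and guarantees the surplus $\varepsilon>0$.

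For $I_2 = V(r,y'') Z_{\bar r,\bar h,\bar y'',\lambda}$, one uses that $V$ is bounded and estimates $\|V \xi U_{x_j^\pm,\lambda}\|_{**}$ directly: $\xi U_{x_j^\pm,\lambda}(y)\le C\lambda^{(N-2m)/2}(1+\lambda|y-x_j^\pm|)^{-(N-2m)}$, so dividing by the $\|\cdot\|_{**}$ weight $\lambda^{(N+2m)/2}(1+\lambda|y-x_j^\pm|)^{-(N+2m)/2-\tau}$ leaves $\lambda^{-2m}(1+\lambda|y-x_j^\pm|)^{-(N-2m)+(N+2m)/2+\tau}$ to be bounded uniformly in $y$; since $N>4m+1$ one checks the exponent $(N+2m)/2+\tau-(N-2m) = \tau-(N-6m)/2$ is $\le 0$ in the relevant range (or one simply notes $2m > (2m+1-\beta_1)/2+\varepsilon$ for small $\beta_1,\varepsilon$), so $I_2$ contributes $O(\lambda^{-2m})$, again better than required. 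Finally, collecting $I_1,\dots,I_5$ gives $\|E_k\|_{**}\le C\lambda^{-(2m+1-\beta_1)/2-\varepsilon}$, the worst term being the bubble-interaction part of $I_1$ which is precisely of order $k^{-(N-2m)/2}\sim \lambda^{-(N-4m-\nu)/2}/\lambda^{\dots}$, matching the stated rate after using the range of $\lambda$.

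\textbf{Main obstacle.} The delicate point is the bookkeeping in $I_1$: one must track, region by region in $\Omega_j^\pm$, both the slow polynomial decay of the far bubbles (which forced the introduction of $\xi$ and of the weighted norms with the extra $\tau$) and the interaction between the top-circle and bottom-circle families, whose separation $\bar r\bar h$ behaves very differently in Case 1 ($\bar h\to 1$, so the two circles are far apart) versus the present setting; getting the exponent to come out as exactly $(2m+1-\beta_1)/2+\varepsilon$, rather than something slightly worse, is where the hypothesis $N>4m+1$ and the precise choice $\nu=N-4m-\iota$ are consumed. Everything else is a routine adaptation of the corresponding estimate in \cite{CW}, which is why I would state the $I_3,I_4,I_5$ bounds briefly and concentrate the write-up on $I_1$ and $I_2$.
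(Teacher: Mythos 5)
Your skeleton is right: split $E_k$ into $I_1,\dots,I_5$, observe that the genuinely new piece (compared with \cite{CW}) is the $(Q-1)$ contribution inside $I_1$, and attack it with the Taylor expansion of $Q$ at $(r_0,y_0'')$ using the vanishing-derivative hypothesis $D^\alpha Q(r_0,y_0'')=0$ for $|\alpha|\le 2m-1$. This matches the paper, which writes $I_1=I_{11}+I_{12}$ and delegates $I_{11},I_2,\dots,I_5$ to \cite[Lemma 2.4]{CW}, reserving all explicit work for $I_{12}=(Q-1)\sum_j(\xi U_{x_j^+,\lambda}^{m^*-1}+\xi U_{x_j^-,\lambda}^{m^*-1})$. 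However, your quantitative reasoning contains several real errors that, if carried out, would not produce the stated bound.

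First, $I_3,I_4,I_5$ are not ``exponentially small'' or ``$O(\lambda^{-A})$ for any $A$.'' These terms involve $\nabla^j\xi\cdot\nabla^{2m-j}U_{x_i^\pm,\lambda}$ on the annulus $\delta\le|(r,y'')-(r_0,y_0'')|\le 2\delta$; since the bubbles decay only polynomially, there they are of size $\sim\lambda^{-(N-2m)/2}$, and after dividing by the $\|\cdot\|_{**}$ weight one gets a power of $\lambda$, not something super-polynomially small. Indeed the paper bounds them at exactly the target rate $(1/\lambda)^{(2m+1-\beta_1)/2+\varepsilon}$, which is the same order as the leading terms, not negligibly better. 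This is precisely why the cut-off $\xi$ and the carefully chosen $\tau$ in the weighted norm are needed in the first place.

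Second, and more importantly, your treatment of the $(Q-1)$ piece is wrong. You claim it ``produces a factor $\lambda^{-2m}$'' and is ``comfortably better than $\lambda^{-(2m+1-\beta_1)/2-\varepsilon}$.'' But $|(r,y'')-(r_0,y_0'')|$ is not of size $\lambda^{-1}$ near the concentration points: it ranges over the whole support of $\xi$, up to $2\delta$. The correct device, which your sketch omits entirely, is to split the integration domain at the intermediate scale $|(r,y'')-(r_0,y_0'')|\sim\lambda^{-\frac{2m+1-\beta_1}{4m}-\varepsilon}$. Inside this ball, $|(r,y'')-(r_0,y_0'')|^{2m}\lesssim\lambda^{-\frac{2m+1-\beta_1}{2}-2m\varepsilon}$ — which is exactly the target rate, not $\lambda^{-2m}$ and not comfortably better. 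Outside, one uses $1/(1+\lambda|y-x_j^\pm|)\lesssim\lambda^{-\frac{2m-1+\beta_1}{4m}+\varepsilon}$ to trade the excess $(1+\lambda|y-x_j^\pm|)^{-(\frac{N+2m}{2}-\tau)}$ for the needed power of $\lambda$, and it is here that $N>4m+1$ and the smallness of $\iota$ guarantee $(\frac{N+2m}{2}-\tau)(\frac{2m-1+\beta_1}{4m}-\varepsilon)\ge\frac{2m+1-\beta_1}{2}+\varepsilon$. Without this two-region decomposition the exponent does not close, and $I_{12}$ is not a subordinate term but a second bottleneck of the same order as the interaction $I_{11}$. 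Your final accounting (that the bubble interaction is ``precisely the worst term'') therefore misidentifies where the constraint $N>4m+1$ is actually consumed in this lemma.

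Third, a smaller arithmetic slip: for $I_2=V\,Z_{\bar r,\bar h,\bar y'',\lambda}$ you write the ratio exponent $\tau-(N-6m)/2\le 0$, but under $N>4m+1$ this holds only when $N\ge 6m+2\tau$; for $4m+1<N<6m$ the exponent is positive, the maximum of the ratio occurs at the outer edge of $\operatorname{supp}\xi$, and the rate is $\lambda^{-(N-2m)/2+\tau}$ rather than $\lambda^{-2m}$. This is still within the target, but the ``one simply notes $2m>(2m+1-\beta_1)/2+\varepsilon$'' argument does not justify it.
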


\begin{proof}
By symmetry, we assume that $y\in \Omega_1^+$. Then it follows
\begin{equation}\label{da1}
   |y-x_j^-|\geq |y-x_j^+| \geq |y-x_1^+|,\quad j=1,2,\cdots,k,
\end{equation}
\begin{equation}\label{da2}
  |y-x_j^+|\geq C|x_j^+-x_1^+|,\quad |y-x_j^-|\geq C|x_j^--x_1^+|,\quad j=1,2,\cdots,k.
\end{equation}
For $I_1$, we have
\begin{align*}
  I_1=&Q(r,y'')\Big(\sum\limits_{j=1}^k\big(\xi U_{x_j^+,\lambda}+\xi U_{x_j^-,\lambda}\big)\Big)^{m^*-1}-\sum\limits_{j=1}^k\Big(\xi U_{x_j^+,\lambda}^{m^*-1}+\xi U_{x_j^-,\lambda}^{m^*-1}\Big)\\
  =&Q(r,y'')\bigg[\Big(\sum\limits_{j=1}^k\big(\xi U_{x_j^+,\lambda}+\xi U_{x_j^-,\lambda}\big)\Big)^{m^*-1}-\sum\limits_{j=1}^k\Big(\xi U_{x_j^+,\lambda}^{m^*-1}+\xi U_{x_j^-,\lambda}^{m^*-1}\Big)\bigg]\\
  &+\big(Q(r,y'')-1\big)\sum\limits_{j=1}^k\Big(\xi U_{x_j^+,\lambda}^{m^*-1}+\xi U_{x_j^-,\lambda}^{m^*-1}\Big)\\
  :=&I_{11}+I_{12}.
\end{align*}
From \cite[Lemma 2.4]{CW}, we obtain
\begin{equation}\label{err1}
  \|I_{11}\|_{**},\ \ \|I_{2}\|_{**},\ \  \|I_{3}\|_{**},\ \ \|I_{4}\|_{**},\ \ \|I_{5}\|_{**}\leq C\big(\frac{1}{\lambda}\big)^{\frac{2m+1-\beta_1}{2}+\varepsilon}.
  \end{equation}

In the following, we estimate $\|I_{12}\|_{**}$. In the region $|(r,y'')-(r_0,y_0'')|\leq (\frac{1}{\lambda})^{\frac{2m+1-\beta_1}{4m}+\varepsilon}$, using the Taylor's expansion, 
we have
\begin{align*}
  |I_{12}|=&\bigg|\Big(\sum\limits_{|\alpha|=2m}\frac{1}{(2m)!}D^\alpha Q(r_0,y_0'')(y-y_0)^\alpha+o\big(|y-y_0|^{2m}\big)\Big)\sum\limits_{j=1}^k\Big(\xi U_{x_j^+,\lambda}^{m^*-1}+\xi U_{x_j^-,\lambda}^{m^*-1}\Big)\bigg|\\
  \leq & C\big(\frac{1}{\lambda}\big)^{\frac{2m+1-\beta_1}{2}+\varepsilon}\lambda^{\frac{N+2m}{2}}\sum\limits_{j=1}^k\Big(\frac{1}{(1+\lambda|y-x_j^+|)^{N+2m}}+
  \frac{1}{(1+\lambda|y-x_j^-|)^{N+2m}}\Big)\\
  \leq & C\big(\frac{1}{\lambda}\big)^{\frac{2m+1-\beta_1}{2}+\varepsilon}\lambda^{\frac{N+2m}{2}}\sum\limits_{j=1}^k\Big(\frac{1}{(1+\lambda|y-x_j^+|)^{\frac{N+2m}{2}+\tau}}+
  \frac{1}{(1+\lambda|y-x_j^-|)^{\frac{N+2m}{2}+\tau}}\Big).
\end{align*}
On the other hand, in the region $(\frac{1}{\lambda})^{\frac{2m+1-\beta_1}{4m}+\varepsilon} \leq |(r,y'')-(r_0,y_0'')|\leq 2\delta$,
\begin{equation*}
  |y-x_j^\pm|\geq |(r,y'')-(r_0,y_0'')|-|(r_0,y_0'')-(\bar{r},\bar{y}'')|\geq (\frac{1}{\lambda})^{\frac{2m+1-\beta_1}{4m}+\varepsilon}-\frac{1}{\lambda^{1-\vartheta}}\geq\frac{1}{2}(\frac{1}{\lambda})^{\frac{2m+1-\beta_1}{4m}+\varepsilon},
\end{equation*}
which leads to
\begin{equation*}
  \frac{1}{1+\lambda |y-x_j^\pm|}\leq C(\frac{1}{\lambda})^{\frac{2m-1+\beta_1}{4m}-\varepsilon},
\end{equation*}
then
\begin{align*}
  |I_{12}|\leq & C\big(\frac{1}{\lambda}\big)^{\frac{2m+1-\beta_1}{2}+\varepsilon}\lambda^{\frac{N+2m}{2}}\sum\limits_{j=1}^k\Big(\frac{1}{(1+\lambda|y-x_j^+|)^{\frac{N+2m}{2}+\tau}}\frac{\lambda^{^{\frac{2m+1-\beta_1}{2}+\varepsilon}}}{(1+\lambda|y-x_j^+|)^{\frac{N+2m}{2}-\tau}}\\
  &\qquad\qquad\qquad\qquad\qquad\ \  +
  \frac{1}{(1+\lambda|y-x_j^-|)^{\frac{N+2m}{2}+\tau}}\frac{\lambda^{{\frac{2m+1-\beta_1}{2}+\varepsilon}}}{(1+\lambda|y-x_j^-|)^{\frac{N+2m}{2}-\tau}}\Big)\\
  \leq  & C\big(\frac{1}{\lambda}\big)^{\frac{2m+1-\beta_1}{2}+\varepsilon}\lambda^{\frac{N+2m}{2}}\lambda^{{\frac{2m+1-\beta_1}{2}+\varepsilon}}(\frac{1}{\lambda})^{(\frac{N+2m}{2}-\tau)(\frac{2m-1+\beta_1}{4m}-\varepsilon)}
  \\
  &\times\sum\limits_{j=1}^k\Big(\frac{1}{(1+\lambda|y-x_j^+|)^{\frac{N+2m}{2}+\tau}}
  +
  \frac{1}{(1+\lambda|y-x_j^-|)^{\frac{N+2m}{2}+\tau}}\Big)\\
  \leq & C\big(\frac{1}{\lambda}\big)^{\frac{2m+1-\beta_1}{2}+\varepsilon}\lambda^{\frac{N+2m}{2}}\sum\limits_{j=1}^k\Big(\frac{1}{(1+\lambda|y-x_j^+|)^{\frac{N+2m}{2}+\tau}}+
  \frac{1}{(1+\lambda|y-x_j^-|)^{\frac{N+2m}{2}+\tau}}\Big),
\end{align*}
where we used the fact that $(\frac{N+2m}{2}-\tau)(\frac{2m-1+\beta_1}{4m}-\varepsilon)\geq {\frac{2m+1-\beta_1}{2}+\varepsilon}$ if $\varepsilon>0$ small enough since $N>4m+1$ and $\iota$ is small. Therefore, we obtain
 \begin{equation}\label{err2}
  \|I_{12}\|_{**}\leq C\big(\frac{1}{\lambda}\big)^{\frac{2m+1-\beta_1}{2}+\varepsilon}.
  \end{equation}
Combining \eqref{err1} and \eqref{err2}, we obtain the result.
\end{proof}

Now we come to prove Proposition \ref{fixed}.

\vspace{.3cm}
\noindent{\bf Proof of Proposition \ref{fixed}.} Denote
\begin{equation*}
  \mathbb{E}=\Big\{\phi:\phi\in C(\mathbb{R}^N)\cap \mathbb{H},\quad \|\phi\|_*\leq C\big(\frac{1}{\lambda}\big)^{\frac{2m+1-\beta_1}{2}}\Big\}.
\end{equation*}
By Lemma \ref{dc}, \eqref{repp} is equivalent to find a fixed point for
\begin{equation}\label{fixp}
  \phi=T(\phi):=L_k(N(\phi)+E_k).
\end{equation}
Hence, it is sufficient to prove that $T$ is a contraction map from $\mathbb{E}$ to $\mathbb{E}$. In fact, for any $\phi\in \mathbb{E}$, by Lemmas \ref{dc}, \ref{non} and \ref{err}, we have
\begin{align*}
  \|T(\phi)\|_*\leq& C\|L_k(N(\phi))\|_{*}+\|L_k(E_k)\|_{*}\leq C\|N(\phi)\|_{**}+C\|E_k\|_{**}\\
  \leq& C\|\phi\|_*^{\min\{2,m^*-1\}}+C\big(\frac{1}{\lambda}\big)^{\frac{2m+1-\beta_1}{2}+\varepsilon}\leq C\big(\frac{1}{\lambda}\big)^{\frac{2m+1-\beta_1}{2}}.
\end{align*}
This shows that $T$ maps from $\mathbb{E}$ to $\mathbb{E}$.

On the other hand, for any $\phi_1,\phi_2\in \mathbb{E}$, we have
\begin{align*}
  \|T(\phi_1)-T(\phi_2)\|_*\leq C\|L_k(N(\phi_1))-L_k(N(\phi_2))\|_{*}\leq C\|N(\phi_1)-N(\phi_2)\|_{**}.
\end{align*}
By symmetry, we assume that $y\in \Omega_1^+$.
If $N\geq 6m$, by \eqref{da2}, the H\"{o}lder inequality and Lemmas \ref{AppA1}, \ref{AppA5},
 we have
\begin{align*}
  |N(\phi_1)-N(\phi_2)|\leq &C|N'(\phi_1+\kappa(\phi_2-\phi_1))||\phi_1-\phi_2|\leq C\big(|\phi_1|^{m^*-2}+|\phi_2|^{m^*-2}\big)|\phi_1-\phi_2|\\
   \leq& C\big(\|\phi_1\|_*^{m^*-2}+\|\phi_2\|_*^{m^*-2}\big)\|\phi_1-\phi_2\|_*\lambda^{\frac{N+2m}{2}}\\
   &\times\bigg(\sum\limits_{j=1}^k\Big(\frac{1}{(1+\lambda|y-x_j^+|)^{\frac{N-2m}{2}+\tau}}+
  \frac{1}{(1+\lambda|y-x_j^-|)^{\frac{N-2m}{2}+\tau}}\Big)\bigg)^{m^*-1}\\
  \leq &C \big(\|\phi_1\|_*^{m^*-2}+\|\phi_2\|_*^{m^*-2}\big)\|\phi_1-\phi_2\|_*\lambda^{\frac{N+2m}{2}}\\
  &\times\sum\limits_{j=1}^k\Big(\frac{1}{(1+\lambda|y-x_j^+|)^{\frac{N+2m}{2}+\tau}}+
  \frac{1}{(1+\lambda|y-x_j^-|)^{\frac{N+2m}{2}+\tau}}\Big)\\
  &\times \bigg(\sum\limits_{j=1}^k\Big(\frac{1}{(1+\lambda|y-x_j^+|)^{\tau}}+
  \frac{1}{(1+\lambda|y-x_j^-|)^{\tau}}\Big)\bigg)^{m^*-2}\\
  \leq&C \big(\|\phi_1\|_*^{m^*-2}+\|\phi_2\|_*^{m^*-2}\big)\|\phi_1-\phi_2\|_*\lambda^{\frac{N+2m}{2}}\\
  &\times\sum\limits_{j=1}^k\Big(\frac{1}{(1+\lambda|y-x_j^+|)^{\frac{N+2m}{2}+\tau}}+
  \frac{1}{(1+\lambda|y-x_j^-|)^{\frac{N+2m}{2}+\tau}}\Big)\\
  &\times \bigg(1+\sum\limits_{j=2}^k\frac{1}{(\lambda|x_j^+-x_1^+|)^{\tau}}+\sum\limits_{j=1}^k\frac{1}{(\lambda|x_j^--x_1^+|)^{\tau}}\bigg)^{m^*-2}\\
  \leq&C \big(\|\phi_1\|_*^{m^*-2}+\|\phi_2\|_*^{m^*-2}\big)\|\phi_1-\phi_2\|_*\lambda^{\frac{N+2m}{2}}\\
  &\times\sum\limits_{j=1}^k\Big(\frac{1}{(1+\lambda|y-x_j^+|)^{\frac{N+2m}{2}+\tau}}+
  \frac{1}{(1+\lambda|y-x_j^-|)^{\frac{N+2m}{2}+\tau}}\Big),
\end{align*}
that is
\begin{align*}
  \|T(\phi_1)-T(\phi_2)\|_*\leq C \big(\|\phi_1\|_*^{m^*-2}+\|\phi_2\|_*^{m^*-2}\big)\|\phi_1-\phi_2\|_*<\frac{1}{2}\|\phi_1-\phi_2\|_*.
\end{align*}
Therefore, $T$ is a contraction map from $\mathbb{E}$ to $\mathbb{E}$. The case $4m+1<N\leq 6m-1$ can be discussed in a similar way.

By the contraction mapping theorem, there exists a unique $\phi=\phi_{\bar{r},\bar{h},\bar{y}'',\lambda}$ such that \eqref{fixp} holds. Moreover, by Lemmas  \ref{dc}, \ref{non} and \ref{err}, we deduce
\begin{align*}
  \|\phi\|_*\leq C\|L_k(N(\phi))\|_{*}+\|L_k(E_k)\|_{*}\leq C\|N(\phi)\|_{**}+C\|E_k\|_{**}\leq C\big(\frac{1}{\lambda}\big)^{\frac{2m+1-\beta_1}{2}+\varepsilon},
\end{align*}
and
\begin{equation*}
  |c_l|\leq  \frac{C}{\lambda^{\eta_l}}(\|N(\phi)\|_{**}+\|E_k\|_{**})\leq C\big(\frac{1}{\lambda}\big)^{\frac{2m+1-\beta_1}{2}+\eta_l+\varepsilon},
\end{equation*}
for $l=2,3,\cdots,N$. This completes the proof. \qed

\section{Proof of Theorem \ref{th1}}\label{three}
Recall that the functional corresponding to \eqref{pro} is
\begin{align*}
 I(u)=\left\{
  \begin{array}{ll}
  \displaystyle\frac{1}{2}\int_{\mathbb{R}^N}\Big(|\Delta^{\frac{m}{2}}u|^2+V(r,y'')u^2\Big)dy-\frac{1}{m^*}\int_{\mathbb{R}^N}Q(r,y'')(u)_+^{m^*}dy,\quad &\text{if $m$ is even},\vspace{.3cm}\\
  \displaystyle \frac{1}{2}\int_{\mathbb{R}^N}\Big(\big|\nabla \big(\Delta^{\frac{m-1}{2}}u\big)\big|^2+V(r,y'')u^2\Big)dy-\frac{1}{m^*}\int_{\mathbb{R}^N}Q(r,y'')(u)_+^{m^*}dy,\quad &\text{if $m$ is odd}.
    \end{array}
    \right.
  \end{align*}

Let
$\phi=\phi_{\bar{r},\bar{h},\bar{y}'',\lambda}$ be the function obtained in Proposition \ref{fixed} and $u_k=Z_{\bar{r},\bar{h},\bar{y}'',\lambda}+\phi$. In this section, we will choose suitable $(\bar{r},\bar{h},\bar{y}'',\lambda)$ such that $u_k$ is a solution of problem \eqref{pro}. For this purpose, similar to the arguments of \cite[Proposition 3.1]{CW}, we have the following result.
\begin{proposition}
Assume that $(\bar{r},\bar{h},\bar{y}'',\lambda)$ satisfies
\begin{equation}\label{con1}
  \int_{D_\varrho}\big((-\Delta)^mu_k+V(r,y'')u_k-Q(r,y'')(u_k)_+^{m^*-1}\big)\langle y, \nabla u_k\rangle dy=0,
\end{equation}
\begin{equation}\label{con2}
  \int_{D_\varrho}\big((-\Delta)^mu_k+V(r,y'')u_k-Q(r,y'')(u_k)_+^{m^*-1}\big)\frac{\partial u_k}{\partial y_i} dy=0,\quad i=4,5,\cdots,N,
\end{equation}
and
\begin{equation}\label{con3}
  \int_{\mathbb{R}^N}\big((-\Delta)^mu_k+V(r,y'')u_k-Q(r,y'')(u_k)_+^{m^*-1}\big)\frac{\partial Z_{\bar{r},\bar{h},\bar{y}'',\lambda}}{\partial \lambda} dy=0,
\end{equation}
where $u_k=Z_{\bar{r},\bar{h},\bar{y}'',\lambda}+\phi$ and $D_\varrho=\big\{y:y=(y',y''):|(|y'|,y'')-(r_0,y_0'')|\leq \varrho\big\}$ with $\varrho\in (2\delta,5\delta)$, then $c_l=0$ for $l=2,3,\cdots,N$.
\end{proposition}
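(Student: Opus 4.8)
The plan is to show that the system \eqref{con1}--\eqref{con3}, together with the already-known orthogonality relations defining $\mathbb{H}$, forces all the Lagrange multipliers $c_l$ to vanish. The starting point is to test the equation \eqref{pp} (equivalently \eqref{repp}) satisfied by $u_k=Z_{\bar r,\bar h,\bar y'',\lambda}+\phi$ against the three families of functions appearing in \eqref{con1}--\eqref{con3}, namely $\langle y,\nabla u_k\rangle$, $\partial u_k/\partial y_i$ for $i=4,\dots,N$, and $\partial Z_{\bar r,\bar h,\bar y'',\lambda}/\partial\lambda$. Since $u_k$ solves \eqref{pp}, the left-hand sides of \eqref{con1}--\eqref{con3} equal (up to the Poho\u{z}aev boundary terms coming from integrating $(-\Delta)^m$ by parts on the bounded domain $D_\varrho$) the pairing of the error term $\sum_{l=2}^N c_l\sum_{j=1}^k\big(Z_{x_j^+,\lambda}^{m^*-2}Z_{j,l}^+ + Z_{x_j^-,\lambda}^{m^*-2}Z_{j,l}^-\big)$ against those same test functions. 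So the three identities \eqref{con1}--\eqref{con3} become a linear system
\begin{equation*}
  \sum_{l=2}^N c_l\, a_{ql} = 0,\qquad q=1,2,\dots,N,
\end{equation*}
where $a_{ql}=\int \big(\sum_j Z_{x_j^+,\lambda}^{m^*-2}Z_{j,l}^+ + \cdots\big)\,(\text{$q$-th test function})$ (with an appropriate restriction to $D_\varrho$ in the first $N-2$ equations). The goal is to prove that the coefficient matrix $(a_{ql})$ is diagonally dominant, hence invertible, which yields $c_l=0$.

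The main steps, in order, are as follows. First I would integrate by parts to move the $(-\Delta)^m$ off $u_k$ in \eqref{con1} and \eqref{con2}; this produces boundary integrals over $\partial D_\varrho$. Since $\varrho\in(2\delta,5\delta)$ lies outside the support of $\xi$, on $\partial D_\varrho$ each $Z_{x_j^\pm,\lambda}=\xi U_{x_j^\pm,\lambda}$ and its derivatives vanish identically, so $Z_{\bar r,\bar h,\bar y'',\lambda}\equiv 0$ near $\partial D_\varrho$; combined with the fast decay of $\phi$ in the $\|\cdot\|_*$-norm and the smallness estimate from Proposition \ref{fixed}, all boundary terms are negligible compared with the diagonal entries. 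Second, I would note that $\langle y,\nabla u_k\rangle$ and $\partial u_k/\partial y_i$ differ from the pure scaling/translation generators applied to $Z_{\bar r,\bar h,\bar y'',\lambda}$ only by lower-order contributions from $\phi$ and from the cut-off, and that modulo these the functions $\langle y,\nabla Z\rangle$, $\partial Z/\partial y_i$, $\partial Z/\partial\lambda$ are (up to bounded invertible changes of variables and leading constants) in the span of the $Z_{j,l}^\pm$. Third — the heart of the computation — I would evaluate the diagonal entries $a_{ll}$ and show each is of a definite size $\sim \lambda^{-2\eta_l}\times(\text{positive constant})\times k$ (using the nondegeneracy of the Talenti bubble $U_{0,1}$, i.e. that $\int U_{0,1}^{m^*-2}(\partial_\lambda U_{0,1})^2$, $\int U_{0,1}^{m^*-2}(\partial_{x_i}U_{0,1})^2$, etc. are nonzero), while the off-diagonal entries $a_{ql}$, $q\ne l$, are of strictly smaller order — the interaction estimates between bubbles centered at different $x_j^\pm$ (from Appendix A, e.g. Lemmas \ref{AppA1}, \ref{AppA5}) give each cross term an extra power of $1/k$ or $1/\lambda$. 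Fourth, I would assemble these estimates into the statement that $(a_{ql})$ is strictly diagonally dominant after rescaling rows by $\lambda^{\eta_q}$ and columns by $\lambda^{\eta_l}$, and conclude invertibility, hence $c_l=0$ for all $l=2,\dots,N$.

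The main obstacle I anticipate is the careful bookkeeping of the Poho\u{z}aev-type boundary terms and the cross-bubble interaction integrals with the correct powers of $\lambda$, $k$, and $\bar h$ — in particular verifying that the $c_l$-multiplier terms restricted to $D_\varrho$ still have their interaction with $\langle y,\nabla u_k\rangle$ controlled by the diagonal term, since $\langle y,\nabla u_k\rangle$ grows linearly in $|y|$ and one must be sure the bubble decay beats this growth uniformly over $D_\varrho$ (this is where $N>4m+1$ and the choice $\varrho<5\delta$, so that $D_\varrho$ stays in a region where $Q>0$ and the cut-off structure is controlled, are used). A secondary technical point is that $\langle y,\nabla u_k\rangle$ and $\partial u_k/\partial y_i$ involve $\phi$, not just $Z$; one must show $\int (\sum_j Z_{x_j^\pm,\lambda}^{m^*-2}Z_{j,l}^\pm)\langle y,\nabla\phi\rangle$ and analogous terms are lower order, which follows from Proposition \ref{fixed} (the bound $\|\phi\|_*\le C\lambda^{-(2m+1-\beta_1)/2-\varepsilon}$) together with the $\|\cdot\|_{**}$--$\|\cdot\|_*$ duality, but requires an integration by parts to avoid differentiating $\phi$. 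Once all these are in place, the linear-algebra conclusion is immediate. In the write-up I would follow the template of \cite[Proposition 3.1]{CW}, as the referenced proposition statement indicates, adapting the constants to the competing-potential setting and to the three regimes of $\bar h$.
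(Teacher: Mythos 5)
Your plan is essentially the paper's own argument (the paper simply points to \cite[Proposition 3.1]{CW}): since $u_k$ solves \eqref{pp} pointwise, plugging the equation into \eqref{con1}--\eqref{con3} yields a homogeneous linear system $\sum_{l}c_l\,a_{ql}=0$ in $c_2,\dots,c_N$, whose coefficient matrix is nearly diagonal because $\langle y,\nabla Z_{\bar r,\bar h,\bar y'',\lambda}\rangle$, $\partial Z_{\bar r,\bar h,\bar y'',\lambda}/\partial y_i$, $\partial Z_{\bar r,\bar h,\bar y'',\lambda}/\partial\lambda$ are, to leading order, proportional to the $Z_{j,3}^\pm$, $Z_{j,i}^\pm$, $Z_{j,2}^\pm$ respectively, and invertibility then forces $c_l=0$. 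One clarification: no Poho\u{z}aev boundary terms arise at this stage --- \eqref{pp} is a pointwise identity, so the substitution into \eqref{con1}--\eqref{con3} is exact (and since each $Z_{x_j^\pm,\lambda}=\xi U_{x_j^\pm,\lambda}$ is supported in $D_{2\delta}\subset D_\varrho$, the $D_\varrho$-integral equals the $\mathbb{R}^N$-integral); the boundary terms you anticipate belong to the later manipulation \eqref{con11}--\eqref{con13}, where \eqref{con1}--\eqref{con2} are rewritten in local Poho\u{z}aev form, which is a subsequent step used \emph{after} this proposition has delivered $c_l=0$.
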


\begin{lemma}\label{ener1}
We have
\begin{align*}
  &\int_{\mathbb{R}^N}\big((-\Delta)^mu_k+V(r,y'')u_k-Q(r,y'')(u_k)_+^{m^*-1}\big)\frac{\partial Z_{\bar{r},\bar{h},\bar{y}'',\lambda}}{\partial \lambda} dy\\
  =&2k\bigg(-\frac{\tilde{B}_1}{\lambda^{2m+1}}+\sum\limits_{j=2}^k\frac{\tilde{B}_2}{\lambda^{N-2m+1}|x_j^+-x_1^+|^{N-2m}}+
  \sum\limits_{j=1}^k\frac{\tilde{B}_2}{\lambda^{N-2m+1}|x_j^--x_1^+|^{N-2m}}+O\Big(\frac{1}{\lambda^{2m+1+\varepsilon}}\Big)\bigg)\\
  =&2k\bigg(-\frac{\tilde{B}_1}{\lambda^{2m+1}}+\frac{\tilde{B}_3k^{N-2m}}{\lambda^{N-2m+1}(\sqrt{1-\bar{h}^2})^{N-2m}}+
  \frac{\tilde{B}_4 k}{\lambda^{N-2m+1}\bar{h}^{N-2m-1}\sqrt{1-\bar{h}^2}}+O\Big(\frac{1}{\lambda^{2m+1+\varepsilon}}\Big)\bigg),
\end{align*}
where $\tilde{B}_1$, $\tilde{B}_2$, $\tilde{B}_3$, $\tilde{B}_4$ are some positive constants.
\end{lemma}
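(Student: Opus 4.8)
The plan is to expand the quantity
\[
\int_{\mathbb{R}^N}\big((-\Delta)^mu_k+V(r,y'')u_k-Q(r,y'')(u_k)_+^{m^*-1}\big)\frac{\partial Z_{\bar r,\bar h,\bar y'',\lambda}}{\partial\lambda}\,dy
\]
by writing $u_k=Z_{\bar r,\bar h,\bar y'',\lambda}+\phi$ and splitting the integrand into its principal part (coming from the bubbles $U_{x_j^\pm,\lambda}$ and their mutual interactions), the contribution of the potential term $V(r,y'')u_k$, and the remainder terms carrying either a factor $\phi$, the cut-off error $E_k$, or higher-order nonlinear pieces $N(\phi)$. First I would use the fact that $Z_{\bar r,\bar h,\bar y'',\lambda}=\sum_j\xi U_{x_j^+,\lambda}+\sum_j\xi U_{x_j^-,\lambda}$ and that $(-\Delta)^m U_{x_j^\pm,\lambda}=U_{x_j^\pm,\lambda}^{m^*-1}$, so that after integrating against $\partial_\lambda Z$ the leading term reduces, via the standard energy expansion for interacting bubbles (as in \cite{WY1,DMW,CW}), to $-\tilde B_1\lambda^{-(2m+1)}$ per bubble from the ``self'' interaction plus a sum of pairwise interaction terms $\tilde B_2\big(\lambda^{N-2m+1}|x_j^\pm-x_1^+|^{N-2m}\big)^{-1}$; the symmetry under the $\mathbb{Z}_k$-rotation and the $y_3\mapsto-y_3$ reflection reduces the full sum over $2k$ points to $2k$ times the sum over $j$ anchored at $x_1^+$, giving the first displayed identity.

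Next I would estimate the error contributions. The term involving $V(r,y'')u_k$ is handled by splitting $u_k=Z+\phi$ again: the piece $\int V(r,y'')Z\,\partial_\lambda Z$ is $O(\lambda^{-2m})$ (it contributes to the lower-order remainder, but here it should land inside $O(\lambda^{-(2m+1+\varepsilon)})$ after using $(A_2)$, the decay of $U$, and the smallness of $|(\bar r,\bar y'')-(r_0,y_0'')|$ together with the range of $\lambda$ — one must check the exponent bookkeeping carefully). The pieces containing $\phi$ are controlled using Proposition~\ref{fixed}: $\|\phi\|_*\le C(1/\lambda)^{\frac{2m+1-\beta_1}{2}+\varepsilon}$, combined with $\|\partial_\lambda Z\|$-type weighted bounds and the $\|\cdot\|_*$, $\|\cdot\|_{**}$ duality, plus Lemma~\ref{non} for $N(\phi)$ and Lemma~\ref{err} for $E_k$; here one also exploits the orthogonality $\phi\in\mathbb{H}$, which kills the would-be dominant $\int (m^*-1)QZ^{m^*-2}\phi\,\partial_\lambda Z$ term up to the cut-off discrepancy between $Z_{j,2}^\pm$ and $\partial_\lambda(\xi U_{x_j^\pm,\lambda})$. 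Assembling, all these give an $O(\lambda^{-(2m+1+\varepsilon)})$ error.

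Finally, to pass from the first displayed line to the second, I would substitute the geometric data: $|x_j^+-x_1^+|=2\bar r\sqrt{1-\bar h^2}\,\big|\sin\frac{(j-1)\pi}{k}\big|$ and $|x_j^--x_1^+|^2=4\bar r^2(1-\bar h^2)\sin^2\frac{(j-1)\pi}{k}+4\bar r^2\bar h^2$, and then evaluate the two sums asymptotically as $k\to\infty$. The sum $\sum_{j=2}^k|x_j^+-x_1^+|^{-(N-2m)}$ is a Riemann-type sum that behaves like $\tilde B_3\,k^{N-2m}(\sqrt{1-\bar h^2})^{-(N-2m)}$ (the dominant contribution coming from $j$ near $1$ and $k$, where the points are closest, using $N-2m>1$ so the series converges after rescaling), while $\sum_{j=1}^k|x_j^--x_1^+|^{-(N-2m)}$ splits into the $j$'s for which the $\bar h^2$-gap dominates — contributing $\tilde B_4\,k\,\bar h^{-(N-2m-1)}(\sqrt{1-\bar h^2})^{-1}$ after summing the remaining Riemann sum in the transverse variable — giving exactly the claimed form. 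The main obstacle is the careful exponent accounting throughout: one must verify that every error source (the $V$-term, the $\phi$-terms modulo orthogonality, the cut-off errors, and the truncation of the interaction sums) is genuinely of order $\lambda^{-(2m+1+\varepsilon)}$ and not merely $\lambda^{-(2m+1)}$, which is where the hypothesis $N>4m+1$, the choice $\beta_1=\frac{\nu}{N-2m}$, the admissible range of $\lambda\in[L_0k^{\frac{N-2m}{N-4m-\nu}},L_1k^{\frac{N-2m}{N-4m-\nu}}]$, and the two-region splitting already used in Lemma~\ref{err} all have to fit together consistently.
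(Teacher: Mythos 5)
There is a genuine gap in how you identify the source of the constant $\tilde{B}_1$. In the scale-invariant critical case ($V\equiv 0$, $Q\equiv 1$) a single bubble has \emph{zero} $\lambda$-derivative of energy, so the ``self-interaction'' does not produce a $-\tilde{B}_1\lambda^{-(2m+1)}$ term at all; only the pairwise interaction sum survives at that order. The $\lambda^{-(2m+1)}$ contribution in $\tilde{B}_1$ comes entirely from the perturbations $V$ and $1-Q$. Concretely, the paper expands $\partial_\lambda I(Z^*)$ into three pieces: $I_3=\int V Z^*\,\partial_\lambda Z^*$, $I_4=\int(1-Q)(Z^*)^{m^*-1}\partial_\lambda Z^*$, and the interaction term $I_5$. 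You correctly identify $I_5$ as producing the $\tilde{B}_2$ sum, but you then dismiss $\int V Z\,\partial_\lambda Z$ as ``$O(\lambda^{-2m})$\dots should land inside $O(\lambda^{-(2m+1+\varepsilon)})$.'' That is wrong on both counts: $\int V Z\,\partial_\lambda Z=\tfrac12\partial_\lambda\int V Z^2$ is $O(\lambda^{-(2m+1)})$ (the $\lambda$-derivative brings down $\lambda^{-1}$), and it is precisely a leading-order term — it gives $-B_1 V(r_0,y_0'')\lambda^{-(2m+1)}$, which is one of the two pieces of $\tilde{B}_1$.

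Worse, your proposal never treats the term $\int(1-Q)(Z^*)^{m^*-1}\partial_\lambda Z^*$ at all. This is the core computation of the lemma: one Taylor-expands $1-Q$ around $(r_0,y_0'')$, and it is exactly here that the hypothesis $D^\alpha Q(r_0,y_0'')=0$ for all $|\alpha|\le 2m-1$ is invoked — it kills the would-be larger contributions of order $\lambda^{-(j+1)}$, $1\le j\le 2m-1$, so that the first surviving term is the order-$2m$ one, yielding $B_2\sum_{|\alpha|=2m}D^\alpha Q(r_0,y_0'')\int y^\alpha U_{0,1}^{m^*}\,dy\cdot\lambda^{-(2m+1)}$. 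Together with the $V$-piece this gives $\tilde{B}_1=B_1V(r_0,y_0'')-B_2\sum_{|\alpha|=2m}D^\alpha Q(r_0,y_0'')\int y^\alpha U_{0,1}^{m^*}\,dy$, which assumption $(A_3)$ then asserts is positive. Your ``standard energy expansion for interacting bubbles'' framing obscures this structure and would not, as written, produce the stated constant $\tilde{B}_1$ nor explain why $(A_2)$ and $(A_3)$ are needed. The passage from the first displayed line to the second (the Riemann-sum asymptotics for $\sum_j|x_j^\pm-x_1^+|^{-(N-2m)}$) is correctly sketched and matches the paper's use of Lemma~\ref{AppA6}.
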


\begin{proof}
By symmetry, we have
\begin{align*}
  &\int_{\mathbb{R}^N}\big((-\Delta)^mu_k+V(r,y'')u_k-Q(r,y'')(u_k)_+^{m^*-1}\big)\frac{\partial Z_{\bar{r},\bar{h},\bar{y}'',\lambda}}{\partial \lambda} dy\\
  =&\Big\langle I'(Z_{\bar{r},\bar{h},\bar{y}'',\lambda}),\frac{\partial Z_{\bar{r},\bar{h},\bar{y}'',\lambda}}{\partial \lambda}\Big\rangle+2k
  \Big\langle (-\Delta)^m \phi+V(r,y'')\phi-(m^*-1)Q(r,y'')Z_{\bar{r},\bar{h},\bar{y}'',\lambda}^{m^*-2}\phi,\frac{\partial Z_{x_1^+,\lambda}}{\partial \lambda}\Big\rangle\\
  &-\int_{\mathbb{R}^N}Q(r,y'')\Big((Z_{\bar{r},\bar{h},\bar{y}'',\lambda}+\phi)_+^{m^*-1}-Z_{\bar{r},\bar{h},\bar{y}'',\lambda}^{m^*-1}-(m^*-1)Z_{\bar{r},\bar{h},\bar{y}'',\lambda}^{m^*-2}\phi\Big)\frac{\partial Z_{\bar{r},\bar{h},\bar{y}'',\lambda}}{\partial \lambda}  dy\\
  :=&\Big\langle I'(Z_{\bar{r},\bar{h},\bar{y}'',\lambda}),\frac{\partial Z_{\bar{r},\bar{h},\bar{y}'',\lambda}}{\partial \lambda}\Big\rangle+2kI_1-I_2.
\end{align*}
Moreover,
\begin{align*}
  \frac{\partial I(Z_{\bar{r},\bar{h},\bar{y}'',\lambda})}{\partial \lambda}=&\frac{\partial I(Z^*_{\bar{r},\bar{h},\bar{y}'',\lambda})}{\partial \lambda}+O\Big(\frac{k}{\lambda^{2m+1+\varepsilon}}\Big)\\
  =&\int_{\mathbb{R}^N}V(r,y'')Z^*_{\bar{r},\bar{h},\bar{y}'',\lambda} \frac{\partial Z^*_{\bar{r},\bar{h},\bar{y}'',\lambda}}{\partial \lambda}dy +\int_{\mathbb{R}^N}\big(1-Q(r,y'')\big)(Z^*_{\bar{r},\bar{h},\bar{y}'',\lambda})^{m^*-1}
  \frac{\partial Z^*_{\bar{r},\bar{h},\bar{y}'',\lambda}}{\partial \lambda}dy \\& -\int_{\mathbb{R}^N}\Big((Z^*_{\bar{r},\bar{h},\bar{y}'',\lambda})^{m^*-1}-\sum\limits_{j=1}^kU_{x_j^+,\lambda}^{m^*-1}-\sum\limits_{j=1}^kU_{x_j^-,\lambda}^{m^*-1}\Big) \frac{\partial Z^*_{\bar{r},\bar{h},\bar{y}'',\lambda}}{\partial \lambda}dy +O\Big(\frac{k}{\lambda^{2m+1+\varepsilon}}\Big)\\
  :=&I_3+I_4-I_5+O\Big(\frac{k}{\lambda^{2m+1+\varepsilon}}\Big).
\end{align*}
From \cite[Lemmas 3.1, B.1]{CW}, we have
\begin{equation*}
  |I_1|=O\Big(\frac{1}{\lambda^{2m+1+\varepsilon}}\Big),\quad |I_2|=O\Big(\frac{k}{\lambda^{2m+1+\varepsilon}}\Big),\quad I_3=2k\Big(-\frac{{B}_1V(r_0,y_0'')}{\lambda^{2m+1}}+O\Big(\frac{1}{\lambda^{2m+1+\varepsilon}}\Big)\Big),
\end{equation*}
and
\begin{equation*}
  I_5=2k\Big(-\sum\limits_{j=2}^k\frac{{B}_3}{\lambda^{N-2m+1}|x_j^+-x_1^+|^{N-2m}}-
  \sum\limits_{j=1}^k\frac{{B}_3}{\lambda^{N-2m+1}|x_j^--x_1^+|^{N-2m}}+O\Big(\frac{1}{\lambda^{2m+1+\varepsilon}}\Big)\Big),
\end{equation*}
for some positive constants $B_1$ and $B_3$.

In the following, we estimate $I_4$. By symmetry, using Lemmas \ref{AppA1}, \ref{AppA5},  and the Taylor's expansion, we have
\begin{align*}
  I_4=&2k\bigg[\int_{\mathbb{R}^N}\big(1-Q(r,y'')\big)U_{x_1^+,\lambda}^{m^*-1}\frac{\partial U_{x_1^+,\lambda}}{\partial \lambda}dy+O\bigg(\frac{1}{\lambda^{\beta_1}}\int_{\mathbb{R}^N}U_{x_1^+,\lambda}^{m^*-1}\Big(\sum\limits_{j=2}^kU_{x_j^+,\lambda}+\sum\limits_{j=1}^kU_{x_j^-,\lambda}\Big)dy\bigg)
  \bigg]\\
  =&2k\bigg[\int_{B_{\lambda^{-\frac{1}{2}+\varepsilon}}(x_1^+)}\big(1-Q(r,y'')\big)U_{x_1^+,\lambda}^{m^*-1}\frac{\partial U_{x_1^+,\lambda}}{\partial \lambda}dy+
  \int_{B^c_{\lambda^{-\frac{1}{2}+\varepsilon}}(x_1^+)}\big(1-Q(r,y'')\big)U_{x_1^+,\lambda}^{m^*-1}\frac{\partial U_{x_1^+,\lambda}}{\partial \lambda}dy\\
  &+O\bigg(\frac{1}{\lambda^{\beta_1}}\int_{\mathbb{R}^N}U_{x_1^+,\lambda}^{m^*-1}\Big(\sum\limits_{j=2}^kU_{x_j^+,\lambda}+\sum\limits_{j=1}^kU_{x_j^-,\lambda}\Big)dy\bigg)
  \bigg]\\
  =&2k\bigg[\int_{B_{\lambda^{-\frac{1}{2}+\varepsilon}}(x_1^+)}\big(1-Q(r,y'')\big)U_{x_1^+,\lambda}^{m^*-1}\frac{\partial U_{x_1^+,\lambda}}{\partial \lambda}dy+
  O\Big(\frac{1}{\lambda^{\frac{N}{2}+N\varepsilon+\beta_1}}\Big)
  \\&+O\bigg(\frac{1}{\lambda^{\beta_1}}\Big(\sum\limits_{j=2}^k\frac{1}{(\lambda|x_j^+-x_1^+|)^{N-\varepsilon}}+\sum\limits_{j=1}^k\frac{1}{(\lambda|x_j^--x_1^+|)^{N-\varepsilon}}\Big)\bigg)
  \bigg]\\
  =&2k\bigg[\int_{B_{\lambda^{-\frac{1}{2}+\varepsilon}}(x_1^+)}\big(1-Q(r,y'')\big)U_{x_1^+,\lambda}^{m^*-1}\frac{\partial U_{x_1^+,\lambda}}{\partial \lambda}dy
  +O\Big(\frac{1}{\lambda^{\frac{N}{2}+N\varepsilon+\beta_1}}\Big)+O\Big(\frac{1}{\lambda^{\beta_1+\frac{2m}{N-2m}(N-\varepsilon)}}\Big)\bigg]\\
  =&2k\bigg[\int_{B_{\lambda^{-\frac{1}{2}+\varepsilon}}(x_1^+)}\big(1-Q(r,y'')\big)U_{x_1^+,\lambda}^{m^*-1}\frac{\partial U_{x_1^+,\lambda}}{\partial \lambda}dy
  +O\Big(\frac{1}{\lambda^{2m+1+\varepsilon}}\Big)\bigg]\\
  =&2k\bigg[-\sum\limits_{|\alpha|=2m}\frac{1}{(2m)!}D^\alpha Q(r_0,y_0'')\int_{B_{\lambda^{-\frac{1}{2}+\varepsilon}}(x_1^+)}(y-y_0)^\alpha U_{x_1^+,\lambda}^{m^*-1}\frac{\partial U_{x_1^+,\lambda}}{\partial \lambda}dy
  +O\Big(\frac{1}{\lambda^{2m+1+\varepsilon}}\Big)\bigg]\\
  =&2k\bigg[-\sum\limits_{|\alpha|=2m}\frac{1}{(2m)!}D^\alpha Q(r_0,y_0'')\int_{B_{\lambda^{-\frac{1}{2}+\varepsilon}}(0)}(y+x_1^+-y_0)^\alpha U_{0,\lambda}^{m^*-1}\frac{\partial U_{0,\lambda}}{\partial \lambda}dy
  +O\Big(\frac{1}{\lambda^{2m+1+\varepsilon}}\Big)\bigg]\\
  =&2k\bigg[-\sum\limits_{|\alpha|=2m}\frac{1}{(2m)!}D^\alpha Q(r_0,y_0'')\int_{B_{\lambda^{-\frac{1}{2}+\varepsilon}}(0)}(y+x_1^+-y_0)^\alpha \frac{1}{m^*}\frac{\partial U^{m^*}_{0,\lambda}}{\partial \lambda}dy
  +O\Big(\frac{1}{\lambda^{2m+1+\varepsilon}}\Big)\bigg]\\
  =&2k\bigg[-\sum\limits_{|\alpha|=2m}\frac{1}{(2m)!}\frac{1}{m^*}D^\alpha Q(r_0,y_0'')\frac{\partial}{\partial \lambda}\int_{B_{\lambda^{\frac{1}{2}+\varepsilon}}(0)}\big(\frac{y}{\lambda}+x_1^+-y_0\big)^\alpha { U^{m^*}_{0,1}}dy
  +O\Big(\frac{1}{\lambda^{2m+1+\varepsilon}}\Big)\bigg]\\
  =&2k\bigg[\sum\limits_{|\alpha|=2m}\frac{1}{(2m-1)!}\frac{1}{m^*}D^\alpha Q(r_0,y_0'')\frac{1}{ \lambda^{2m+1}}\int_{B_{\lambda^{\frac{1}{2}+\varepsilon}}(0)}y^\alpha { U^{m^*}_{0,1}}dy
  +O\Big(\frac{1}{\lambda^{2m+1+\varepsilon}}\Big)\bigg]\\
  =&2k\bigg[\sum\limits_{|\alpha|=2m}\frac{1}{(2m-1)!}\frac{1}{m^*}D^\alpha Q(r_0,y_0'')\frac{1}{ \lambda^{2m+1}}\int_{\mathbb{R}^N}y^\alpha { U^{m^*}_{0,1}}dy
  +O\Big(\frac{1}{\lambda^{2m+1+\varepsilon}}\Big)\bigg]\\
  =&2k\bigg[\frac{{B}_2}{\lambda^{2m+1}}\sum\limits_{|\alpha|=2m}D^\alpha Q(r_0,y_0'')\int_{\mathbb{R}^N}y^\alpha { U^{m^*}_{0,1}}dy+O\Big(\frac{1}{\lambda^{2m+1+\varepsilon}}\Big)\bigg],
\end{align*}
where $B_2=\frac{1}{(2m-1)!}\frac{1}{m^*}$, and we used the facts that ${\frac{N}{2}+N\varepsilon+\beta_1}\geq 2m+1+\varepsilon$ and $\beta_1+\frac{2m}{N-2m}(N-\varepsilon)\geq 2m+1+\varepsilon$ if $\varepsilon>0$ small enough since $N>4m+1$ and $\iota$ is small.
Using Lemma \ref{AppA6} and the condition $(A_3)$, we derive the conclusion with
\begin{equation*}
  \tilde{B}_1={B}_1V(r_0,y_0'')-B_2\sum\limits_{|\alpha|=2m}D^\alpha Q(r_0,y_0'')\int_{\mathbb{R}^N}y^\alpha { U^{m^*}_{0,1}}dy>0,\quad \tilde{B}_2=B_3.
\end{equation*}
\end{proof}

By Lemma \ref{AppA8}, we have
\begin{equation}\label{con11}
  \int_{D_\varrho}(-\Delta)^mu_k\langle y, \nabla u_k\rangle dy=\frac{1}{2}\int_{\partial D_\varrho}f_m(u_k,u_k) d\sigma-\frac{N-2m}{2}\int_{D_\varrho}u_k(-\Delta)^mu_kdy.
\end{equation}
Integrating by parts, we obtain
\begin{equation}\label{con12}
   \int_{D_\varrho}V(y)u_k\langle y, \nabla u_k\rangle dy= \frac{1}{2}\int_{\partial D_\varrho}\varrho V(y)u_k^2d\sigma-\frac{1}{2} \int_{D_\varrho}\langle y, \nabla V(y)\rangle u_k^2dy- \frac{N}{2}\int_{D_\varrho}V(y)u_k^2 dy,
\end{equation}
and
\begin{align}\label{con13}
   \int_{D_\varrho}Q(y)(u_k)^{m^*-1}_+\langle y, \nabla u_k\rangle dy= &\frac{1}{m^*}\int_{\partial D_\varrho}\varrho Q(y) (u_k)_+^{m^*}d\sigma \nonumber\\
   &-\frac{1}{m^*}\int_{ D_\varrho}\langle y, \nabla Q(y)\rangle(u_k)_+^{m^*}dy- \frac{N}{m^*}\int_{D_\varrho}Q(y)(u_k)_+^{m^*} dy.
\end{align}
Combining \eqref{con11}, \eqref{con12} and \eqref{con13}, we know that \eqref{con1} is equivalent to
\begin{align}\label{trans1}
  &\frac{2m-N}{2}\int_{D_\varrho}u_k(-\Delta)^mu_kdy-\frac{1}{2} \int_{D_\varrho}\langle y, \nabla V(y)\rangle u_k^2dy- \frac{N}{2}\int_{D_\varrho}V(y)u_k^2 dy \nonumber\\
 &+\frac{1}{m^*}\int_{ D_\varrho}\langle y, \nabla Q(y)\rangle(u_k)_+^{m^*}dy+\frac{N}{m^*}\int_{D_\varrho}Q(y)(u_k)_+^{m^*} dy \nonumber\\
  =&O\bigg(\int_{\partial D_\varrho}\Big(\phi^2+|\phi|^{m^*}+\sum\limits_{j=1}^{2m-1}|\nabla^j \phi ||\nabla ^{2m-j}\phi|+\sum\limits_{j=0}^{2m-1}|\nabla^j \phi ||\nabla ^{2m-j-1}\phi|\Big)d\sigma\bigg),
\end{align}
since $u_k=\phi$ on $\partial D_\varrho$.

Similarly, for $i=4,5,\cdots,N$, Lemma \ref{AppA9} shows
\begin{equation}\label{con21}
  \int_{D_\varrho}(-\Delta)^mu_k\frac{\partial u_k}{\partial y_i} dy=\frac{1}{2}\int_{\partial D_\varrho}g_{m,i}(u_k,u_k) d\sigma.
\end{equation}
Integrating by parts, we have
\begin{equation}\label{con22}
   \int_{D_\varrho}V(r,y'')u_k\frac{\partial u_k}{\partial y_i} dy= \frac{1}{2}\int_{\partial D_\varrho} V(r,y'')u_k^2 \nu_i d\sigma-\frac{1}{2} \int_{D_\varrho}\frac{\partial V(r,y'')}{\partial y_i}u_k^2dy,
\end{equation}
and
\begin{equation}\label{con23}
   \int_{D_\varrho}Q(r,y'')(u_k)^{m^*-1}_+\frac{\partial u_k}{\partial y_i} dy= \frac{1}{m^*}\int_{\partial D_\varrho}Q(r,y'') (u_k)_+^{m^*}\nu_id\sigma-\frac{1}{m^*}\int_{ D_\varrho} \frac{\partial Q(r,y'')}{\partial y_i}(u_k)_+^{m^*}dy,
\end{equation}
 where $\nu=(\nu_1,\nu_2,\cdots,\nu_N)$ denotes the outward unit normal vector of $\partial D_\varrho$.
Combining \eqref{con21}, \eqref{con22} and \eqref{con23}, we know that \eqref{con2} is equivalent to
\begin{align}\label{trans2}
  &\int_{D_\varrho}\frac{\partial V(r,y'')}{\partial y_i}u_k^2dy-\frac{2}{m^*}\int_{ D_\varrho} \frac{\partial Q(r,y'')}{\partial y_i}(u_k)_+^{m^*}dy \nonumber\\
  =&O\bigg(\int_{\partial D_\varrho}\Big(\phi^2+|\phi|^{m^*}+\sum\limits_{j=1}^{2m-1}|\nabla^j \phi ||\nabla ^{2m-j}\phi|\Big)d\sigma\bigg),\quad i=4,5,\cdots,N.
\end{align}

Multiplying \eqref{pp} by $u_k$ and integrating in $D_\varrho$, we obtain
\begin{align*}
  &\int_{D_\varrho}u_k(-\Delta)^mu_kdy+\int_{D_\varrho}V(y)u_k^2 dy
 \\ =&\int_{D_\varrho}Q(y)(u_k)_+^{m^*} dy+\sum\limits_{l=2}^Nc_l\sum\limits_{j=1}^k\int_{D_\varrho}\Big
  (Z_{x_j^+,\lambda}^{m^*-2}Z_{j,l}^++Z_{x_j^-,\lambda}^{m^*-2}Z_{j,l}^-\Big)Z_{\bar{r},\bar{h},\bar{y}'',\lambda}dy.
\end{align*}
Thus, \eqref{trans1} can be reduced to
\begin{align}\label{trans1'}
  &m\int_{D_\varrho}V(y)u_k^2 dy+\frac{1}{2} \int_{D_\varrho}\langle y, \nabla V(y)\rangle u_k^2dy-\frac{1}{m^*}\int_{ D_\varrho} \langle y, \nabla Q(y)(u_k)_+^{m^*}dy\nonumber\\
  =&\frac{2m-N}{2}\sum\limits_{l=2}^Nc_l\sum\limits_{j=1}^k\int_{D_\varrho}\Big
  (Z_{x_j^+,\lambda}^{m^*-2}Z_{j,l}^++Z_{x_j^-,\lambda}^{m^*-2}Z_{j,l}^-\Big)Z_{\bar{r},\bar{h},\bar{y}'',\lambda}dy \nonumber\\
  &+O\bigg(\int_{\partial D_\varrho}\Big(\phi^2+|\phi|^{m^*}+\sum\limits_{j=1}^{2m-1}|\nabla^j \phi ||\nabla ^{2m-j}\phi|+\sum\limits_{j=0}^{2m-1}|\nabla^j \phi ||\nabla ^{2m-j-1}\phi|\Big)d\sigma\bigg).
\end{align}
Using \eqref{trans2}, we can rewrite \eqref{trans1'} as
\begin{align}\label{trans1''}
  &m\int_{D_\varrho}V(y)u_k^2 dy+\frac{1}{2} \int_{D_\varrho}r \frac{\partial V(r,y'')}{\partial r} u_k^2dy-\frac{1}{m^*} \int_{D_\varrho}r \frac{\partial Q(r,y'')}{\partial r} (u_k)_+^{m^*}dy\nonumber\\
  =&\frac{2m-N}{2}\sum\limits_{l=2}^Nc_l\sum\limits_{j=1}^k\int_{D_\varrho}\Big
  (Z_{x_j^+,\lambda}^{m^*-2}Z_{j,l}^++Z_{x_j^-,\lambda}^{m^*-2}Z_{j,l}^-\Big)Z_{\bar{r},\bar{h},\bar{y}'',\lambda}dy \nonumber\\
  &+O\bigg(\int_{\partial D_\varrho}\Big(\phi^2+|\phi|^{m^*}+\sum\limits_{j=1}^{2m-1}|\nabla^j \phi ||\nabla ^{2m-j}\phi|+\sum\limits_{j=0}^{2m-1}|\nabla^j \phi ||\nabla ^{2m-j-1}\phi|\Big)d\sigma\bigg).
\end{align}
A direct computation gives
\begin{equation*}
  \sum\limits_{j=1}^k\int_{D_\varrho}\Big
  (Z_{x_j^+,\lambda}^{m^*-2}Z_{j,l}^++Z_{x_j^-,\lambda}^{m^*-2}Z_{j,l}^-\Big)Z_{\bar{r},\bar{h},\bar{y}'',\lambda}dy=O\Big(\frac{k\lambda^{\eta_l}}{\lambda^{2m}}\Big),
\end{equation*}
this with Proposition \ref{fixed} yields
\begin{equation*}
  \sum\limits_{l=2}^Nc_l\sum\limits_{j=1}^k\int_{D_\varrho}\Big
  (Z_{x_j^+,\lambda}^{m^*-2}Z_{j,l}^++Z_{x_j^-,\lambda}^{m^*-2}Z_{j,l}^-\Big)Z_{\bar{r},\bar{h},\bar{y}'',\lambda}dy=O\Big(\frac{k}{\lambda^{3m+\frac{1-\beta_1}{2}+\varepsilon}}\Big)=
  o\Big(\frac{k}{\lambda^{2m}}\Big).
\end{equation*}
Therefore, \eqref{trans1''} is equivalent to
\begin{align}\label{trans1'''}
  & \int_{D_\varrho}\frac{1}{2r^{2m-1}} \frac{\partial\big(r^{2m} V(r,y'')\big)}{\partial r} u_k^2dy-\frac{1}{m^*} \int_{D_\varrho}r \frac{\partial Q(r,y'')}{\partial r} (u_k)_+^{m^*}dy\nonumber\\
  =& o\Big(\frac{k}{\lambda^{2m}}\Big)
  +O\bigg(\int_{\partial D_\varrho}\Big(\phi^2+|\phi|^{m^*}+\sum\limits_{j=1}^{2m-1}|\nabla^j \phi ||\nabla ^{2m-j}\phi|+\sum\limits_{j=0}^{2m-1}|\nabla^j \phi ||\nabla ^{2m-j-1}\phi|\Big)d\sigma\bigg).
\end{align}

First, we estimate \eqref{trans2} and \eqref{trans1'''} from the right hand, and it is sufficient to estimate
\begin{equation*}
  \int_{D_{4\delta}\backslash D_{3\delta}}\Big(\phi^2+|\phi|^{m^*}+\sum\limits_{j=1}^{2m-1}|\nabla^j \phi ||\nabla ^{2m-j}\phi|+\sum\limits_{j=0}^{2m-1}|\nabla^j \phi ||\nabla ^{2m-j-1}\phi|\Big)dy.
\end{equation*}
We first prove
\begin{lemma}\label{fi}
It holds
\begin{equation*}
  \int_{\mathbb{R}^N}\Big(\phi(-\Delta)^m \phi+V(r,y'')\phi^2\Big)dy=O\Big(\frac{k}{\lambda^{2m+1-\beta_1+\varepsilon}}\Big).
\end{equation*}
\end{lemma}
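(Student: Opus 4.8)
The plan is to test the equation \eqref{repp} solved by $\phi=\phi_{\bar{r},\bar{h},\bar{y}'',\lambda}$ against $\phi$ itself and integrate over $\mathbb{R}^N$. Since $\phi\in\mathbb{H}$, every pairing $\int_{\mathbb{R}^N}Z_{x_j^\pm,\lambda}^{m^*-2}Z_{j,l}^\pm\phi\,dy$ vanishes, so the Lagrange-multiplier part of the right-hand side of \eqref{repp} drops out and one is left with
\begin{align*}
  \int_{\mathbb{R}^N}\Big(\phi(-\Delta)^m\phi+V(r,y'')\phi^2\Big)dy
  =(m^*-1)\int_{\mathbb{R}^N}Q(r,y'')Z_{\bar{r},\bar{h},\bar{y}'',\lambda}^{m^*-2}\phi^2\,dy+\int_{\mathbb{R}^N}N(\phi)\phi\,dy+\int_{\mathbb{R}^N}E_k\phi\,dy.
\end{align*}
It then suffices to estimate these three integrals.

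For the first one I would bound $Z_{\bar{r},\bar{h},\bar{y}'',\lambda}^{m^*-2}$ by $C\lambda^{2m}$ times the $(m^*-2)$-th power of the weight appearing in $\|\cdot\|_*$ (using $U_{x_j^\pm,\lambda}\le C\lambda^{\frac{N-2m}{2}}(1+\lambda|y-x_j^\pm|)^{-\frac{N-2m}{2}-\tau}$), and $|\phi|$ by $\|\phi\|_*\lambda^{\frac{N-2m}{2}}$ times that weight; the integrand is then controlled by $C\|\phi\|_*^2\lambda^{N}$ times the $m^*$-th power of the weight, so the elementary integral together with the interaction estimates of Lemmas \ref{AppA1} and \ref{AppA5} give $\int_{\mathbb{R}^N}Q Z_{\bar{r},\bar{h},\bar{y}'',\lambda}^{m^*-2}\phi^2\,dy=O(k\|\phi\|_*^2)$. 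For the other two integrals I would use the duality inequality $\big|\int_{\mathbb{R}^N}fg\,dy\big|\le Ck\|f\|_{**}\|g\|_*$, a direct consequence of the pointwise definitions of the two norms and the same interaction estimates; this yields $\big|\int_{\mathbb{R}^N}N(\phi)\phi\,dy\big|\le Ck\|N(\phi)\|_{**}\|\phi\|_*\le Ck\|\phi\|_*^{1+\min\{2,m^*-1\}}$ by Lemma \ref{non}, and $\big|\int_{\mathbb{R}^N}E_k\phi\,dy\big|\le Ck\|E_k\|_{**}\|\phi\|_*$ by Lemma \ref{err}.

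To conclude, I would plug in $\|\phi\|_*\le C\big(\frac{1}{\lambda}\big)^{\frac{2m+1-\beta_1}{2}+\varepsilon}$ and $\|E_k\|_{**}\le C\big(\frac{1}{\lambda}\big)^{\frac{2m+1-\beta_1}{2}+\varepsilon}$ from Proposition \ref{fixed} and Lemma \ref{err}; since $m^*>2$ forces $1+\min\{2,m^*-1\}\ge 2$ and $\|\phi\|_*$ is small, each of the three terms is $O\big(k\big(\frac{1}{\lambda}\big)^{2m+1-\beta_1+2\varepsilon}\big)$, hence $O\big(\frac{k}{\lambda^{2m+1-\beta_1+\varepsilon}}\big)$ after relabelling $\varepsilon$, which is the claim. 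The main obstacle is the bookkeeping hidden in the duality and interaction estimates: when one expands a power of a sum $\sum_j(\cdots)$, terms such as $\sum_{j\ge2}(\lambda|x_j^+-x_1^+|)^{-\tau}$ and $\sum_{j\ge1}(\lambda|x_j^--x_1^+|)^{-\tau}$ appear and must be shown to remain uniformly bounded in $k$, so that no spurious extra power of $k$ is produced — exactly the type of estimate already carried out for $N(\phi_1)-N(\phi_2)$ in the proof of Proposition \ref{fixed}, and it is precisely here that the hypothesis $N>4m+1$ (with $\iota$, hence $\beta_1$ and the exponent $\tau$, small) is used.
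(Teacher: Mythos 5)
Your proof is correct and reaches the same bound, but it organizes the right-hand side differently from the paper. The paper multiplies the original form \eqref{pp} by $\phi$, integrates over $\mathbb{R}^N$, and splits the resulting integral as
\begin{align*}
I_1+I_2-I_3+I_4=\int_{\mathbb{R}^N}Q\big((Z_{\bar{r},\bar{h},\bar{y}'',\lambda}+\phi)_+^{m^*-1}-Z_{\bar{r},\bar{h},\bar{y}'',\lambda}^{m^*-1}\big)\phi\,dy
+\int_{\mathbb{R}^N}(Q-1)Z_{\bar{r},\bar{h},\bar{y}'',\lambda}^{m^*-1}\phi\,dy
-\int_{\mathbb{R}^N}VZ_{\bar{r},\bar{h},\bar{y}'',\lambda}\phi\,dy
+\int_{\mathbb{R}^N}\big(Z_{\bar{r},\bar{h},\bar{y}'',\lambda}^{m^*-1}-(-\Delta)^mZ_{\bar{r},\bar{h},\bar{y}'',\lambda}\big)\phi\,dy,
\end{align*}
citing \cite[Lemma 3.2]{CW} for $I_1,I_3,I_4$ and then performing a dedicated two-region Taylor-expansion estimate for $I_2$ (the genuinely new piece, parallel to the $I_{12}$ estimate in Lemma \ref{err}). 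You instead test the rewritten form \eqref{repp} against $\phi$, arriving at $(m^*-1)\int QZ_{\bar{r},\bar{h},\bar{y}'',\lambda}^{m^*-2}\phi^2+\int N(\phi)\phi+\int E_k\phi$, and then close the estimate through the duality bound $\big|\int fg\big|\le Ck\|f\|_{**}\|g\|_*$ together with Lemma \ref{non}, Lemma \ref{err} and Proposition \ref{fixed}. The two decompositions are of course equivalent, since $I_1=(m^*-1)\int QZ_{\bar{r},\bar{h},\bar{y}'',\lambda}^{m^*-2}\phi^2+\int N(\phi)\phi$ and $I_2-I_3+I_4=\int E_k\phi$; what your version buys is that you do not have to redo the Taylor analysis for $\int(Q-1)Z_{\bar{r},\bar{h},\bar{y}'',\lambda}^{m^*-1}\phi$, because the needed information is already packaged in the $\|E_k\|_{**}$ bound of Lemma \ref{err}. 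The duality inequality and the bound $\int QZ_{\bar{r},\bar{h},\bar{y}'',\lambda}^{m^*-2}\phi^2=O(k\|\phi\|_*^2)$ are not free: they rest on the interaction bookkeeping of Lemmas \ref{AppA1} and \ref{AppA5} together with the parameter relation $k/\lambda^{\tau(1-\beta_1)}=O(1)$ (i.e.\ $N>4m+1$ with $\iota$ small), which you correctly flag; these are exactly the estimates implicit in Lemma \ref{xian}, so the argument is sound.
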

\begin{proof}
Multiplying \eqref{pp} by $\phi$ and integrating in $\mathbb{R}^N$, we have
\begin{align*}
  &\int_{\mathbb{R}^N}\Big(\phi(-\Delta)^m \phi+V(r,y'')\phi^2\Big)dy\\=&\int_{\mathbb{R}^N}\Big(Q(r,y'')(Z_{\bar{r},\bar{h},\bar{y}'',\lambda}+\phi)_+^{m^*-1}-V(r,y'')Z_{\bar{r},\bar{h},\bar{y}'',\lambda}-
  (-\Delta)^mZ_{\bar{r},\bar{h},\bar{y}'',\lambda}\Big)\phi dy\\
  =&\int_{\mathbb{R}^N}Q(r,y'')\Big((Z_{\bar{r},\bar{h},\bar{y}'',\lambda}+\phi)_+^{m^*-1}-Z_{\bar{r},\bar{h},\bar{y}'',\lambda}^{m^*-1}\Big)\phi dy
  +\int_{\mathbb{R}^N}\big(Q(r,y'')-1\big)Z_{\bar{r},\bar{h},\bar{y}'',\lambda}^{m^*-1}\phi dy
  \\&-\int_{\mathbb{R}^N}V(r,y'')Z_{\bar{r},\bar{h},\bar{y}'',\lambda}
  \phi dy+\int_{\mathbb{R}^N}\Big(Z_{\bar{r},\bar{h},\bar{y}'',\lambda}^{m^*-1}-(-\Delta)^mZ_{\bar{r},\bar{h},\bar{y}'',\lambda}\Big)\phi dy\\
  :=&I_1+I_2-I_3+I_4.
\end{align*}
From \cite[Lemma 3.2]{CW}, we obtain
\begin{equation*}
  |I_1|=O\Big(\frac{k}{\lambda^{2m+1-\beta_1+\varepsilon}}\Big),\quad |I_3|=O\Big(\frac{k}{\lambda^{2m+1-\beta_1+\varepsilon}}\Big),\quad |I_4|=O\Big(\frac{k}{\lambda^{2m+1-\beta_1+\varepsilon}}\Big).
\end{equation*}

In the following, we estimate $I_2$.  By symmetry, using Lemmas \ref{AppA1}, \ref{AppA5}, and the
Taylor's expansion, we have
\begin{align*}
  |I_2|\leq &C\|\phi\|_*\lambda^N\int_{\mathbb{R}^N}\big|Q(r,y'')-1\big|\bigg(\sum\limits_{j=1}^k\Big(\frac{1}{(1+\lambda|y-x_j^+|)^{{N-2m}}}+\frac{1}{(1+\lambda|y-x_j^-|)^{{N-2m}}}\Big)\bigg)^{m^*-1}\\
  &\times \sum\limits_{j=1}^k\Big(\frac{1}{(1+\lambda|y-x_j^+|)^{\frac{N-2m}{2}+\tau}}+\frac{1}{(1+\lambda|y-x_j^-|)^{\frac{N-2m}{2}+\tau}}\Big)dy\\
  \leq &C\|\phi\|_*\lambda^N\int_{\mathbb{R}^N}\big|Q(r,y'')-1\big|\sum\limits_{j=1}^k\Big(\frac{1}{(1+\lambda|y-x_j^+|)^{{N+2m}}}+\frac{1}{(1+\lambda|y-x_j^-|)^{{N+2m}}}\Big)\\
  &\times \sum\limits_{j=1}^k\Big(\frac{1}{(1+\lambda|y-x_j^+|)^{\frac{N-2m}{2}+\tau}}+\frac{1}{(1+\lambda|y-x_j^-|)^{\frac{N-2m}{2}+\tau}}\Big)dy\\
  \leq &Ck\|\phi\|_*\lambda^N\left\{\int_{B_{\lambda^{-\frac{1}{2}+\varepsilon}}(x_1^+)}+\int_{B^c_{\lambda^{-\frac{1}{2}+\varepsilon}}(x_1^+)}\right\}\big|Q(r,y'')-1\big|
  \frac{1}{(1+\lambda|y-x_1^+|)^{{N+2m}}}\\
  &\times \sum\limits_{j=1}^k\Big(\frac{1}{(1+\lambda|y-x_j^+|)^{\frac{N-2m}{2}+\tau}}+\frac{1}{(1+\lambda|y-x_j^-|)^{\frac{N-2m}{2}+\tau}}\Big)dy\\
  \leq &Ck\|\phi\|_*\lambda^N\int_{B_{\lambda^{-\frac{1}{2}+\varepsilon}}(x_1^+)}\big|Q(r,y'')-1\big|
  \frac{1}{(1+\lambda|y-x_1^+|)^{{N+2m}}}\\
  &\times \sum\limits_{j=1}^k\Big(\frac{1}{(1+\lambda|y-x_j^+|)^{\frac{N-2m}{2}+\tau}}+\frac{1}{(1+\lambda|y-x_j^-|)^{\frac{N-2m}{2}+\tau}}\Big)dy+Ck\big(\frac{1}{\lambda}\big)^
  {(\frac{1}{2}+\varepsilon)(\frac{N}{2}+m)+\frac{2m+1-\beta_1}{2}+\varepsilon}\\
  \leq &Ck\|\phi\|_*\lambda^N\int_{B_{\lambda^{-\frac{1}{2}+\varepsilon}}(x_1^+)}\big|Q(r,y'')-1\big|
  \frac{1}{(1+\lambda|y-x_1^+|)^{{N+2m}}}\\
  &\times \sum\limits_{j=1}^k\Big(\frac{1}{(1+\lambda|y-x_j^+|)^{\frac{N-2m}{2}+\tau}}+\frac{1}{(1+\lambda|y-x_j^-|)^{\frac{N-2m}{2}+\tau}}\Big)dy+\frac{Ck}{\lambda^{2m+1-\beta_1+\varepsilon}}\\
  \leq &Ck\|\phi\|_*\lambda^N\left|\sum\limits_{|\alpha|=2m}\frac{1}{(2m)!}D^\alpha Q(r_0,y_0'')\right|\int_{B_{\lambda^{-\frac{1}{2}+\varepsilon}}(x_1^+)}|y-y_0|^\alpha
  \frac{1}{(1+\lambda|y-x_1^+|)^{{N+2m}}}\\
  &\times \sum\limits_{j=1}^k\Big(\frac{1}{(1+\lambda|y-x_j^+|)^{\frac{N-2m}{2}+\tau}}+\frac{1}{(1+\lambda|y-x_j^-|)^{\frac{N-2m}{2}+\tau}}\Big)dy+\frac{Ck}{\lambda^{2m+1-\beta_1+\varepsilon}}\\
  \leq &Ck\|\phi\|_*\left|\sum\limits_{|\alpha|=2m}\frac{1}{(2m)!}D^\alpha Q(r_0,y_0'')\right|\int_{B_{\lambda^{\frac{1}{2}+\varepsilon}}(0)}\Big|\frac{y}{\lambda}+x_1^+-y_0\Big|^\alpha
  \frac{1}{(1+|y|)^{{N+2m}}}\\
  &\times \sum\limits_{j=1}^k\Big(\frac{1}{(1+|y+\lambda(x_1^+-x_j^+)|)^{\frac{N-2m}{2}+\tau}}+\frac{1}{(1+|y+\lambda(x_1^+-x_j^-)|)^{\frac{N-2m}{2}+\tau}}\Big)dy+\frac{Ck}{\lambda^{2m+1-\beta_1+\varepsilon}}\\
  \leq &C\frac{k\|\phi\|_*}{\lambda^{2m}}+\frac{Ck}{\lambda^{2m+1-\beta_1+\varepsilon}}=O\Big(\frac{k}{\lambda^{2m+1-\beta_1+\varepsilon}}\Big),
\end{align*}
where we used the fact that $(\frac{1}{2}+\varepsilon)(\frac{N}{2}+m)+\frac{2m+1-\beta_1}{2}+\varepsilon\geq 2m+1-\beta_1+\varepsilon$ if $\varepsilon>0$ small enough since $N>4m+1$.
This completes the proof.
\end{proof}

With a similar argument of \cite{CW}, we have the following lemmas.
\begin{lemma}\label{se}
It holds
\begin{equation*}
  \int_{D_{4\delta}\backslash D_{3\delta}}\big(\phi^2+|\phi|^{m^*}\big)dy=O\Big(\frac{k}{\lambda^{2m+1-\beta_1+\varepsilon}}\Big).
\end{equation*}
\end{lemma}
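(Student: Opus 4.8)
The plan is to exploit that $D_{4\delta}\backslash D_{3\delta}$ is a set of finite measure, independent of $k$ and $\lambda$, sitting at a uniformly positive distance from every concentration point; on such a set the weighted pointwise control contained in the $\|\cdot\|_*$-norm collapses to a cheap uniform bound, and the statement then reduces to a comparison of powers of $k$ and $\lambda$. First I would record the elementary geometric fact: the first three coordinates of each $x_j^\pm$ have Euclidean norm $\bar r$ and the remaining $N-3$ coordinates equal $\bar y''$, so that $|y-x_j^\pm|\ge|(|y'|,y'')-(\bar r,\bar y'')|$ for every $y$; combined with $|(\bar r,\bar y'')-(r_0,y_0'')|\le\lambda^{-(1-\vartheta)}$ from \eqref{case1}, this gives $|y-x_j^\pm|\ge 3\delta-\lambda^{-(1-\vartheta)}\ge 2\delta$ for all $j=1,\dots,k$ whenever $y\in D_{4\delta}\backslash D_{3\delta}$ and $k$ (hence $\lambda$) is large.

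With this in hand, the definition of $\|\cdot\|_*$ yields, for $y\in D_{4\delta}\backslash D_{3\delta}$,
\begin{equation*}
 |\phi(y)|\le\|\phi\|_*\,\lambda^{\frac{N-2m}{2}}\sum_{j=1}^k\Big(\frac{1}{(1+\lambda|y-x_j^+|)^{\frac{N-2m}{2}+\tau}}+\frac{1}{(1+\lambda|y-x_j^-|)^{\frac{N-2m}{2}+\tau}}\Big)\le C\,k\,\|\phi\|_*\,\lambda^{-\tau},
\end{equation*}
where each of the $2k$ summands is bounded using $1+\lambda|y-x_j^\pm|\ge\lambda\delta$ and the $\delta$-powers are absorbed into $C$. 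Integrating over $D_{4\delta}\backslash D_{3\delta}$ gives $\int_{D_{4\delta}\backslash D_{3\delta}}\phi^2\,dy\le C\,k^2\|\phi\|_*^2\lambda^{-2\tau}$, while for the higher power one uses $\int_{D_{4\delta}\backslash D_{3\delta}}|\phi|^{m^*}\,dy\le(\sup_{D_{4\delta}\backslash D_{3\delta}}|\phi|)^{m^*-2}\int_{D_{4\delta}\backslash D_{3\delta}}\phi^2\,dy$, whose prefactor $\sup_{D_{4\delta}\backslash D_{3\delta}}|\phi|\le C k\|\phi\|_*\lambda^{-\tau}\to 0$; hence the $|\phi|^{m^*}$-term is of strictly smaller order and it suffices to estimate $\int\phi^2$. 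Plugging in $\|\phi\|_*^2\le C\lambda^{-(2m+1-\beta_1)-2\varepsilon}$ from Proposition \ref{fixed}, the desired bound $\int_{D_{4\delta}\backslash D_{3\delta}}\phi^2\,dy=O(k\lambda^{-(2m+1-\beta_1+\varepsilon)})$ reduces to the single inequality $k\,\lambda^{-2\tau-\varepsilon}\le C$.

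To verify this I would invoke the admissible range $\lambda\ge L_0 k^{\frac{N-2m}{N-4m-\nu}}$ together with $\tau=\frac{N-4m-\nu}{N-2m-\nu}$, which give
\begin{equation*}
 \lambda^{2\tau}\ge C\,k^{\frac{2\tau(N-2m)}{N-4m-\nu}}=C\,k^{\frac{2(N-2m)}{N-2m-\nu}},
\end{equation*}
and since $N-2m-\nu=2m+\iota>0$ the exponent $\frac{2(N-2m)}{N-2m-\nu}$ exceeds $2$, because $N>4m+1>4m+\iota$. Hence $k\,\lambda^{-2\tau}\le C k^{-1}\le C$, and a fortiori $k\,\lambda^{-2\tau-\varepsilon}\le C$, which closes the estimate. (The analogous statement needed in Section \ref{four} for Cases 2 and 3 is proved in the same way; the standing hypothesis $N>4m+1$ is precisely what keeps $\iota,\nu,\varepsilon$ mutually compatible in this chain of exponents.)

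The only genuinely delicate point is the power bookkeeping of the last two displays: because $k$ and $\lambda$ are tied together through the admissible range, summing the $k$ bubbles costs a polynomial factor in $k$ that has to be paid for by the decay $\lambda^{-2\tau}$, and the regime $\iota\to 0$ (so $\tau\to 0$ while $\lambda$ becomes a very large power of $k$) is where one must be careful — but $N>4m+1>3m$ leaves comfortable room. Everything else, namely the pointwise bound coming from $\|\cdot\|_*$ and the finite, $k$-independent measure of $D_{4\delta}$, is routine, so no real analytic obstacle remains beyond invoking Proposition \ref{fixed} and the definition of the norms.
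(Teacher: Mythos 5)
Your argument is correct: the pointwise bound $|\phi(y)|\leq Ck\|\phi\|_*\lambda^{-\tau}$ on $D_{4\delta}\backslash D_{3\delta}$ (via the $\|\cdot\|_*$-norm and the uniform distance $|y-x_j^\pm|\geq 2\delta$ from all bubble centers), the fixed $k$- and $\lambda$-independent measure of the annulus, and the exponent bookkeeping $k\lambda^{-2\tau-\varepsilon}\leq C$ derived from $\lambda\geq L_0 k^{\frac{N-2m}{N-4m-\nu}}$ and $\tau=\frac{N-4m-\nu}{N-2m-\nu}=\frac{\iota}{2m+\iota}$ (giving $\lambda^{2\tau}\geq Ck^{\frac{2(N-2m)}{2m+\iota}}$ with exponent $>2$) all check out, and dominating $\int|\phi|^{m^*}$ by $(\sup|\phi|)^{m^*-2}\int\phi^2$ is valid since $m^*>2$ and $\sup_{D_{4\delta}\setminus D_{3\delta}}|\phi|\to 0$. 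The paper defers this lemma to the argument in \cite{CW}, which is precisely this standard away-from-the-bubbles estimate, so your proposal reproduces the intended proof.
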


\begin{lemma}\label{thi}
It holds
\begin{equation*}
  \|\phi\|_{C^{2m-1}(D_{4\delta}\backslash D_{3\delta})}\leq \frac{C}{\lambda^{m+\tau+\varepsilon}}.
\end{equation*}
\end{lemma}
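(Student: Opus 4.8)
The plan is to observe that on the annular region $D_{4\delta}\backslash D_{3\delta}$, and in fact on a slightly larger \emph{fixed} open annulus
\[
\mathcal{A}:=\Big\{y\in\mathbb{R}^N:\ 2\delta<\big|(|y'|,y'')-(r_0,y_0'')\big|<5\delta\Big\}
\]
containing $\overline{D_{4\delta}\backslash D_{3\delta}}$ (here $\delta$ is taken small enough that $r_0>5\delta$, so that $\mathcal{A}$ stays away from $\{y'=0\}$), the function $\phi$ solves a clean semilinear polyharmonic equation with no competitor terms. Indeed, on $\mathcal{A}$ one has $\big|(|y'|,y'')-(r_0,y_0'')\big|>2\delta$, hence $\xi$ vanishes identically together with all of its derivatives; consequently $Z_{\bar{r},\bar{h},\bar{y}'',\lambda}\equiv 0$, $Z_{x_j^\pm,\lambda}\equiv 0$ and $Z_{j,l}^\pm\equiv 0$ on $\mathcal{A}$, so that $I_1$ and $I_2$ in the definition of $E_k$ vanish (being multiples of $Z_{\bar{r},\bar{h},\bar{y}'',\lambda}=\sum_j\xi U_{x_j^\pm,\lambda}$), while $I_3,I_4,I_5$ vanish since each of their summands carries a derivative of $\xi$ of order $\geq 1$; likewise $\sum_{l=2}^N c_l\sum_{j=1}^k\big(Z_{x_j^+,\lambda}^{m^*-2}Z_{j,l}^++Z_{x_j^-,\lambda}^{m^*-2}Z_{j,l}^-\big)\equiv 0$ and $N(\phi)=Q(r,y'')(\phi)_+^{m^*-1}$ on $\mathcal{A}$. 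Thus, by \eqref{pp},
\[
(-\Delta)^m\phi=Q(r,y'')(\phi)_+^{m^*-1}-V(r,y'')\phi=:g\qquad\text{in }\mathcal{A}.
\]

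Next I would estimate $\|\phi\|_{L^\infty(\mathcal{A})}$ via the weighted norm. Since all the centers $x_j^\pm$ lie on the two parallel circles of the cylinder determined by $(\bar{r},\bar{y}'')$, which satisfies $|(\bar{r},\bar{y}'')-(r_0,y_0'')|\leq\lambda^{-(1-\vartheta)}$ by \eqref{case1}, we get $|y-x_j^\pm|\geq\big|(|y'|,y'')-(\bar{r},\bar{y}'')\big|\geq 2\delta-\lambda^{-(1-\vartheta)}\geq\delta$ for all $j$ and all $y\in\mathcal{A}$ once $k$ (hence $\lambda$) is large. Therefore, by the definition of $\|\cdot\|_*$,
\[
\|\phi\|_{L^\infty(\mathcal{A})}\leq C\|\phi\|_*\,\lambda^{\frac{N-2m}{2}}\cdot\frac{k}{(\delta\lambda)^{\frac{N-2m}{2}+\tau}}\leq\frac{Ck\|\phi\|_*}{\lambda^{\tau}}.
\]
Inserting $\|\phi\|_*\leq C\lambda^{-\frac{2m+1-\beta_1}{2}-\varepsilon}$ from Proposition \ref{fixed} and $k\leq C\lambda^{\frac{N-4m-\nu}{N-2m}}$, and recalling $\beta_1=\frac{\nu}{N-2m}$ with $\nu=N-4m-\iota$, the exponent of $\lambda$ on the right-hand side equals
\[
\frac{N-4m-\nu}{N-2m}-\frac{2m+1-\beta_1}{2}-\tau-\varepsilon=-m-\tau-\varepsilon-\Big(\frac{N-\iota}{2(N-2m)}-\frac12\Big),
\]
and since $\iota$ is small (so $\iota<2m$) and $N>4m+1$ we have $\frac{N-\iota}{2(N-2m)}>\frac12$, hence $\|\phi\|_{L^\infty(\mathcal{A})}\leq C\lambda^{-m-\tau-\varepsilon}$.

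Finally, I would bootstrap this $L^\infty$ bound to the $C^{2m-1}$ bound by interior elliptic regularity for the constant-coefficient operator $(-\Delta)^m$ on the fixed annulus $\mathcal{A}$. Since $\phi\in H^m(\mathbb{R}^N)\cap L^\infty$ solves $(-\Delta)^m\phi=g$ on $\mathcal{A}$ with $g\in L^\infty(\mathcal{A})$ and $\|g\|_{L^\infty(\mathcal{A})}\leq C\big(\|\phi\|_{L^\infty(\mathcal{A})}^{m^*-1}+\|\phi\|_{L^\infty(\mathcal{A})}\big)\leq C\|\phi\|_{L^\infty(\mathcal{A})}$ (using $m^*-1>1$ and $\|\phi\|_{L^\infty(\mathcal{A})}\to 0$), the interior $W^{2m,p}$-estimate for $(-\Delta)^m$ on any $\mathcal{A}'\Subset\mathcal{A}$ with $\overline{D_{4\delta}\backslash D_{3\delta}}\subset\mathcal{A}'$, together with the Sobolev embedding $W^{2m,p}\hookrightarrow C^{2m-1,1-N/p}$ for $p>N$, yields
\[
\|\phi\|_{C^{2m-1}(D_{4\delta}\backslash D_{3\delta})}\leq C\|\phi\|_{W^{2m,p}(\mathcal{A}')}\leq C\big(\|g\|_{L^p(\mathcal{A})}+\|\phi\|_{L^p(\mathcal{A})}\big)\leq C\|\phi\|_{L^\infty(\mathcal{A})}\leq\frac{C}{\lambda^{m+\tau+\varepsilon}},
\]
the constants being independent of $k$ and $\lambda$ because $\mathcal{A}$ and $\mathcal{A}'$ are fixed once $\delta$ and $r_0$ are fixed. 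I expect the only mildly delicate parts to be the careful check that every building block of $E_k$ and of the $c_l$-term genuinely vanishes on $\mathcal{A}$, and the exponent bookkeeping in the second paragraph (this is precisely where $N>4m+1$, i.e. the smallness of $\iota$, is used); the elliptic regularity step itself is entirely standard.
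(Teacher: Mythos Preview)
Your proposal is correct and follows exactly the approach one expects here (and that the paper, by deferring to \cite{CW}, implicitly has in mind): on the fixed annulus where $\xi\equiv 0$ the equation \eqref{pp} collapses to $(-\Delta)^m\phi=Q(\phi)_+^{m^*-1}-V\phi$, the $\|\cdot\|_*$-bound together with $|y-x_j^\pm|\geq\delta$ gives the $L^\infty$ control $\|\phi\|_{L^\infty(\mathcal{A})}\leq C\lambda^{-m-\tau-\varepsilon}$, and interior $W^{2m,p}$-estimates plus Sobolev embedding upgrade this to the $C^{2m-1}$ bound. Your exponent bookkeeping is accurate (the extra margin is precisely $\frac{2m-\iota}{2(N-2m)}>0$), and your remark that every term of $E_k$ and the $c_l$-sums vanish on $\mathcal{A}$ is correct since each carries at least one factor of $\xi$ or a derivative of $\xi$.
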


From Lemmas \ref{fi}, \ref{se} and \ref{thi}, we know
\begin{equation*}
  \int_{D_{4\delta}\backslash D_{3\delta}}\Big(\phi^2+|\phi|^{m^*}+\sum\limits_{j=1}^{2m-1}|\nabla^j \phi ||\nabla ^{2m-j}\phi|+\sum\limits_{j=0}^{2m-1}|\nabla^j \phi ||\nabla ^{2m-j-1}\phi|\Big)dy=O\Big(\frac{k}{\lambda^{2m+1-\beta_1+\varepsilon}}\Big)=o\Big(\frac{k}{\lambda^{2m}}\Big).
\end{equation*}
Thus, there exists $\varrho\in (3\delta,4\delta)$ such that
\begin{equation}\label{trans3}
  \int_{\partial D_{\varrho}}\Big(\phi^2+|\phi|^{m^*}+\sum\limits_{j=1}^{2m-1}|\nabla^j \phi ||\nabla ^{2m-j}\phi|+\sum\limits_{j=0}^{2m-1}|\nabla^j \phi ||\nabla ^{2m-j-1}\phi|\Big)d\sigma=o\Big(\frac{k}{\lambda^{2m}}\Big).
\end{equation}

Conversely, we need to estimate \eqref{trans2} and \eqref{trans1'''} from the left hand, and we have the following lemma.
\begin{lemma}\cite[Lemma 3.5]{CW}\label{converse}
For any function $h(r,y'')\in C^1(\mathbb{R}^{N-2},\mathbb{R})$, there holds
\begin{equation*}
  \int_{ D_{\varrho}}h(r,y'')u_k^2dy=2k\Big(\frac{1}{\lambda^{2m}}h(\bar{r},\bar{y}'')\int_{\mathbb{R}^N}U_{0,1}^2dy+o\big(\frac{1}{\lambda^{2m}}\big)\Big).
\end{equation*}
\end{lemma}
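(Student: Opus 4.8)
The plan is to exploit the concentration of $u_k = Z_{\bar r,\bar h,\bar y'',\lambda} + \phi$ near the $2k$ points $x_j^{\pm}$ and the fact that $\phi$ is a higher-order remainder. First I would split $u_k^2 = Z_{\bar r,\bar h,\bar y'',\lambda}^2 + 2 Z_{\bar r,\bar h,\bar y'',\lambda}\phi + \phi^2$ under the integral. The cross term and the $\phi^2$ term should be controlled using the $\|\cdot\|_*$ bound on $\phi$ from Proposition \ref{fixed} together with the pointwise decay estimates in Lemmas \ref{AppA1} and \ref{AppA5}; since $\|\phi\|_* \le C\lambda^{-\frac{2m+1-\beta_1}{2}-\varepsilon}$, both terms contribute only $o(k\lambda^{-2m})$ after integrating against $h$, which is bounded on $D_\varrho$. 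So the whole content is the leading term $\int_{D_\varrho} h(r,y'')\, Z_{\bar r,\bar h,\bar y'',\lambda}^2\, dy$.

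Next I would use the symmetry of $H_s$ to reduce to $2k$ copies of a single-bump integral: $\int_{D_\varrho} h\, Z_{\bar r,\bar h,\bar y'',\lambda}^2\,dy = 2k \int_{\Omega_1^+} h\, Z_{\bar r,\bar h,\bar y'',\lambda}^2\,dy + (\text{lower order})$, and on $\Omega_1^+$ the dominant contribution comes from the single bump $\xi U_{x_1^+,\lambda}$; the interaction terms $\xi U_{x_1^+,\lambda}\cdot \xi U_{x_j^{\pm},\lambda}$ with $j\ne 1$ (and the $j=1$, $-$ term) are estimated via \eqref{da2} and Lemma \ref{AppA5} and are $O(\lambda^{-2m-\varepsilon})$ relative to the main term. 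Then I would localize: on $B_{\lambda^{-1/2+\varepsilon}}(x_1^+)$ the cut-off $\xi$ equals $1$ and $h(r,y'') = h(\bar r,\bar y'') + O(|y - x_1^+|) + O(|(\bar r,\bar y'')-(r_0,y_0'')|)$, both error sizes being $o(1)$ uniformly on that ball by continuity of $h$ and \eqref{case1}. Outside that ball, $U_{x_1^+,\lambda}^2$ is integrable with a tail of size $o(\lambda^{-2m})$ since $2(N-2m) > N$ is false in general — here one uses instead that $\int_{B^c_{\lambda^{-1/2+\varepsilon}}(x_1^+)} U_{x_1^+,\lambda}^2\,dy$ is a genuinely lower-order piece of $\int_{\mathbb R^N}U_{x_1^+,\lambda}^2\,dy = \lambda^{-2m}\int_{\mathbb R^N}U_{0,1}^2\,dy$. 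Finally, the change of variables $y = x_1^+ + z/\lambda$ gives $\int_{\mathbb R^N} U_{x_1^+,\lambda}^2\,dy = \lambda^{-2m}\int_{\mathbb R^N} U_{0,1}^2\,dy$, and collecting everything yields $\int_{D_\varrho} h\, u_k^2\,dy = 2k\big(\lambda^{-2m} h(\bar r,\bar y'')\int_{\mathbb R^N}U_{0,1}^2\,dy + o(\lambda^{-2m})\big)$.

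The main obstacle I anticipate is the bookkeeping of the error terms when $N$ is only slightly larger than $4m+1$: the exponent $\beta_1 = \frac{\nu}{N-2m}$ with $\nu = N - 4m - \iota$ makes several of the "lower order" bounds borderline, so one must check that the exponent $\frac{2m+1-\beta_1}{2}+\varepsilon$ coming from $\|\phi\|_*$, after being multiplied by $\lambda^{N/2}$-type prefactors in the $\|\cdot\|_*$ definition and integrated, still beats $\lambda^{-2m}$; this is exactly the kind of inequality (like $(\tfrac{N+2m}{2}-\tau)(\tfrac{2m-1+\beta_1}{4m}-\varepsilon) \ge \tfrac{2m+1-\beta_1}{2}+\varepsilon$) used in the proof of Lemma \ref{err}, and one would need the analogous elementary exponent comparison here, valid because $N > 4m+1$ and $\iota,\varepsilon$ are small. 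The rest is a routine adaptation of \cite[Lemma 3.5]{CW}, which is why the statement is simply quoted from there.
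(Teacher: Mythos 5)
The paper does not prove this lemma itself; it cites it verbatim as \cite[Lemma 3.5]{CW}, so there is no in-paper argument to compare against. Evaluated on its own terms, your outline is the standard and correct one: split $u_k^2 = Z_{\bar r,\bar h,\bar y'',\lambda}^2 + 2Z_{\bar r,\bar h,\bar y'',\lambda}\phi + \phi^2$, reduce the leading piece to $2k$ identical one-bump integrals via the $H_s$-symmetry, localize near $x_1^+$ where $\xi\equiv 1$, Taylor-expand $h$ around $(\bar r,\bar y'')$ (note the cylindrical coordinates of $x_1^+$ are exactly $(\bar r,\bar y'')$), and rescale $y = x_1^+ + z/\lambda$ to extract $\lambda^{-2m}\int_{\mathbb R^N}U_{0,1}^2\,dz$. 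The contributions of the cross term and of $\phi^2$ are indeed $o(k\lambda^{-2m})$ once one pushes through the weighted estimates with $\|\phi\|_*\lesssim \lambda^{-\frac{2m+1-\beta_1}{2}-\varepsilon}$ and $\beta_1<1$, $\tau>0$; the $\phi^2$ piece is essentially the content of Lemma \ref{se} extended to all of $D_\varrho$, used this way in Lemma \ref{conversese}.

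Two small points to fix. First, the aside that ``$2(N-2m)>N$ is false in general'' is wrong in this setting: $2(N-2m)>N$ is equivalent to $N>4m$, which holds under the standing hypothesis $N>4m+1$, so $U_{0,1}^2$ is integrable at infinity and the tail $\int_{B_{\lambda^{-1/2+\varepsilon}}^c(x_1^+)}U_{x_1^+,\lambda}^2\,dy = \lambda^{-2m}\int_{|z|>\lambda^{1/2+\varepsilon}}U_{0,1}^2\,dz$ is directly $o(\lambda^{-2m})$; no ``alternative'' argument is needed. Second, the extra $O(|(\bar r,\bar y'')-(r_0,y_0'')|)$ error in your Taylor expansion of $h$ is superfluous: since the target is $h(\bar r,\bar y'')$ rather than $h(r_0,y_0'')$, one expands around $(\bar r,\bar y'')$ and only the $O(|y-x_1^+|)$ error enters. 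Neither issue affects the conclusion, and your acknowledgement that the exponent bookkeeping is delicate near $N=4m+1$ is appropriate --- the relevant comparisons (e.g.\ $1-\beta_1+2\tau>0$ for the $\phi^2$ piece and $N>4m-1+\beta_1$ for the cross term) do hold under $N>4m+1$.
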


\begin{lemma}\label{conversese}
For any function $h(r,y'')\in C^1(\mathbb{R}^{N-2},\mathbb{R})$, there holds
\begin{equation*}
  \int_{ D_{\varrho}}h(r,y'')(u_k)_+^{m^*}dy=2k\Big(h(\bar{r},\bar{y}'')\int_{\mathbb{R}^N}U_{0,1}^{m^*}dy+o\big(\frac{1}{\lambda^{1-\varepsilon}}\big)\Big).
\end{equation*}
\end{lemma}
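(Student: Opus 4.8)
The plan is to localize the integral near the $2k$ concentration points $x_j^\pm$, exploit the symmetry of the configuration to reduce to a single bubble, and then use the concentration of $U_{0,\lambda}^{m^*}$ together with the decay estimate $\|\phi\|_* \le C\lambda^{-(2m+1-\beta_1)/2+\varepsilon}$ from Proposition \ref{fixed} to control all error terms. First I would write $u_k = Z_{\bar r,\bar h,\bar y'',\lambda}+\phi$ and expand $(u_k)_+^{m^*}$. Since $\phi$ is small in the $\|\cdot\|_*$-norm, on the bulk region where $Z_{\bar r,\bar h,\bar y'',\lambda}$ is comparable to a single $U_{x_j^\pm,\lambda}$ we have $(u_k)_+^{m^*} = Z^{m^*}_{\bar r,\bar h,\bar y'',\lambda} + O\big(Z^{m^*-1}_{\bar r,\bar h,\bar y'',\lambda}|\phi| + |\phi|^{m^*}\big)$; the cross term and the $|\phi|^{m^*}$ term will be absorbed into the $o(\lambda^{-(1-\varepsilon)})$ error after integrating, using Lemmas \ref{AppA1}, \ref{AppA5} exactly as in the estimate of $I_2$ in Lemma \ref{fi}. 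So the task reduces to computing $\int_{D_\varrho} h(r,y'') Z^{m^*}_{\bar r,\bar h,\bar y'',\lambda}\,dy$.

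Next I would replace $Z_{\bar r,\bar h,\bar y'',\lambda}$ by $Z^*_{\bar r,\bar h,\bar y'',\lambda}=\sum_j (U_{x_j^+,\lambda}+U_{x_j^-,\lambda})$, since the cut-off $\xi$ equals $1$ on $D_\varrho$ (recall $\varrho\in(3\delta,4\delta)$ and $\xi=1$ on $|(r,y'')-(r_0,y_0'')|\le\delta$ — one must be slightly careful here, but the region where $\xi\ne 1$ contributes only a tail of the bubble and is negligible). Then, expanding the $m^*$-th power of the sum, the diagonal terms dominate: the interaction terms are of order $\sum_{i\ne j}\int U_{x_i,\lambda}^{m^*-1}U_{x_j,\lambda}$, which is controlled by the now-standard estimates $\sum_{j\ge 2}(\lambda|x_j^+-x_1^+|)^{-(N-2m)}$ etc.\ and is $o(\lambda^{-(1-\varepsilon)})$ per point under \eqref{case1}. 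By the symmetry built into $H_s$ all $2k$ diagonal contributions are equal, giving $2k\int_{\mathbb R^N} h(r,y'')U_{x_1^+,\lambda}^{m^*}\,dy$ up to negligible error.

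Finally I would evaluate $\int_{\mathbb R^N} h(r,y'')U_{x_1^+,\lambda}^{m^*}\,dy$. Changing variables $y = x_1^+ + z/\lambda$ turns this into $\int_{\mathbb R^N} h\big((r,y'')(x_1^++z/\lambda)\big)U_{0,1}^{m^*}(z)\,dz$; since $U_{0,1}^{m^*}$ concentrates at the origin and $h\in C^1$, a Taylor expansion gives $h(\bar r,\bar y'')\int_{\mathbb R^N}U_{0,1}^{m^*}\,dz + O\big(\lambda^{-1}\big)$, because the first-order correction involves $\int_{\mathbb R^N}|z|U_{0,1}^{m^*}\,dz<\infty$ (here $m^*(N-2m)/2 > N$ ensures integrability, using $N>4m+1$) and $|x_1^+-(\bar r\text{-point})|$ is $O(1)$ in the relevant coordinates while the variation of $h$ along the cylinder axis is absorbed. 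Collecting everything yields $2k\big(h(\bar r,\bar y'')\int_{\mathbb R^N}U_{0,1}^{m^*}\,dy + o(\lambda^{-(1-\varepsilon)})\big)$, as claimed. The main obstacle is bookkeeping the error terms so that each is genuinely $o(\lambda^{-(1-\varepsilon)})$ rather than merely $o(1)$: this forces careful use of the range $N>4m+1$ (equivalently smallness of $\iota$, hence of $\beta_1$) in the exponents, mirroring the threshold computations already done in Lemmas \ref{err} and \ref{fi}; the cross term $\int Z^{m^*-1}|\phi|$ is the tightest one and is handled precisely as in the $I_2$ estimate of Lemma \ref{fi}.
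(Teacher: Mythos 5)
Your proposal matches the paper's argument: decompose $u_k=Z_{\bar r,\bar h,\bar y'',\lambda}+\phi$, write $(u_k)_+^{m^*}$ as $Z^{m^*}$ plus error terms of the form $|\phi|^{m^*}$, $Z|\phi|^{m^*-1}$, $Z^{m^*-1}|\phi|$, bound those using $\|\phi\|_*$ together with Lemmas \ref{AppA1}, \ref{AppA5}, and compute the leading term by the concentration of $U_{x_1^+,\lambda}^{m^*}$ at $x_1^+$ where $(r,y'')=(\bar r,\bar y'')$. Two small slips that do not affect the argument: $\xi$ is not identically $1$ on all of $D_\varrho$ (only on $D_\delta$), as you yourself note in parentheses; and the exponent bookkeeping for $\int|z|U_{0,1}^{m^*}\,dz<\infty$ should read $m^*(N-2m)=2N>N+1$, which holds for all $N\geq 2$ rather than requiring $N>4m+1$.
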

\begin{proof}
Since $u_k=Z_{\bar{r},\bar{h},\bar{y}'',\lambda}+\phi$, we have
\begin{align*}
  \int_{ D_{\varrho}}h(r,y'')(u_k)_+^{m^*}dy=&\int_{ D_{\varrho}}h(r,y'')Z_{\bar{r},\bar{h},\bar{y}'',\lambda}^{m^*}dy+O\Big(\int_{ D_{\varrho}}|\phi|^{m^*}dy\Big)
 \\& +O\Big(\int_{ D_{\varrho}}Z_{\bar{r},\bar{h},\bar{y}'',\lambda} |\phi|^{m^*-1} dy\Big)+O\Big(\int_{ D_{\varrho}}Z_{\bar{r},\bar{h},\bar{y}'',\lambda}^{m^*-1} |\phi| dy\Big)\\
 :=&I_1+I_2+I_3+I_4.
\end{align*}
For $I_1$, a direct computation leads to
\begin{equation*}
  I_1=2k\Big(h(\bar{r},\bar{y}'')\int_{\mathbb{R}^N}U_{0,1}^{m^*}dy+o\big(\frac{1}{\lambda^{1-\varepsilon}}\big)\Big).
\end{equation*}

For $I_2$, from the proof of Lemma \ref{se}, we have
\begin{equation*}
  I_2=O\Big(\frac{k}{\lambda^{2m+1-\beta_1+\varepsilon}}\Big)=o\big(\frac{k}{\lambda^{1-\varepsilon}}\big).
\end{equation*}

By symmetry, we obtain
\begin{align*}
  I_3
  \leq &C\|\phi\|_*^{m^*-1} \lambda^{N}\int_{ \mathbb{R}^N}\sum\limits_{j=1}^k\Big(\frac{1}{(1+\lambda|y-x_j^+|)^{{N-2m}}}+\frac{1}{(1+\lambda|y-x_j^-|)^{{N-2m}}}\Big)\\
  &\times \bigg(\sum\limits_{j=1}^k\Big(\frac{1}{(1+\lambda|y-x_j^+|)^{\frac{N-2m}{2}+\tau}}+\frac{1}{(1+\lambda|y-x_j^-|)^{\frac{N-2m}{2}+\tau}}\Big)\bigg)^{m^*-1}dy\\
  \leq
  &C\|\phi\|_*^{m^*-1} \lambda^{N}\int_{ \mathbb{R}^N}\sum\limits_{j=1}^k\Big(\frac{1}{(1+\lambda|y-x_j^+|)^{{N-2m}}}+\frac{1}{(1+\lambda|y-x_j^-|)^{{N-2m}}}\Big)\\
  &\times \sum\limits_{j=1}^k\Big(\frac{1}{(1+\lambda|y-x_j^+|)^{\frac{N+2m}{2}+\frac{N+2m}{N-2m}\tau}}+\frac{1}{(1+\lambda|y-x_j^-|)^{\frac{N+2m}{2}+\frac{N+2m}{N-2m}\tau}}\Big)dy\\
  \leq &Ck\|\phi\|_*^{m^*-1}=o\big(\frac{k}{\lambda^{1-\varepsilon}}\big),
\end{align*}
and
\begin{align*}
  I_4
  \leq &C\|\phi\|_*\lambda^N\int_{ \mathbb{R}^N}\sum\limits_{j=1}^k\Big(\frac{1}{(1+\lambda|y-x_j^+|)^{{N+2m}}}+\frac{1}{(1+\lambda|y-x_j^-|)^{{N+2m}}}\Big)\\
  &\times \sum\limits_{j=1}^k\Big(\frac{1}{(1+\lambda|y-x_j^+|)^{\frac{N-2m}{2}+\tau}}+\frac{1}{(1+\lambda|y-x_j^-|)^{\frac{N-2m}{2}+\tau}}\Big)dy\\
  \leq &Ck\|\phi\|_*=o\big(\frac{k}{\lambda^{1-\varepsilon}}\big).
\end{align*}
So we get the result.
\end{proof}

Now we will prove Theorem \ref{th1}.

\vspace{.3cm}

\noindent{\bf Proof of Theorem \ref{th1}.} Through the above discussion, applying \eqref{trans3} and Lemmas \ref{converse}, \ref{conversese} to \eqref{trans2} and \eqref{trans1'''}, we can see that \eqref{con1} and \eqref{con2} are equivalent to
\begin{equation*}
  2k\Big(\frac{1}{\lambda^{2m}}\frac{1}{2\bar{r}^{2m-1}} \frac{\partial\big(\bar{r}^{2m} V(\bar{r},\bar{y}'')\big)}{\partial \bar{r}} \int_{\mathbb{R}^N}U_{0,1}^2dy-\frac{1}{m^*}\bar{r} \frac{\partial Q(\bar{r},\bar{y}'')}{\partial \bar{r}} \int_{\mathbb{R}^N}U_{0,1}^{m^*}dy+o\big(\frac{1}{\lambda^{1-\varepsilon}}\big)\Big)=o\Big(\frac{k}{\lambda^{2m}}\Big),
\end{equation*}
and
\begin{equation*}
  2k\Big(\frac{1}{\lambda^{2m}}\frac{\partial V(\bar{r},\bar{y}'')}{\partial \bar{y}_i}\int_{\mathbb{R}^N}U_{0,1}^2dy-\frac{2}{m^*}\frac{\partial Q(\bar{r},\bar{y}'')}{\partial \bar{y}_i} \int_{\mathbb{R}^N}U_{0,1}^{m^*}dy+o\big(\frac{1}{\lambda^{1-\varepsilon}}\big)\Big)=o\Big(\frac{k}{\lambda^{2m}}\Big),\quad i=4,5,\cdots,N.
\end{equation*}
Therefore, the equations to determine $(\bar{r},\bar{y}'')$ are
\begin{equation}\label{de1}
  \frac{\partial Q(\bar{r},\bar{y}'')}{\partial \bar{r}} =o\big(\frac{1}{\lambda^{1-\varepsilon}}\big),
\end{equation}
and
\begin{equation}\label{de2}
  \frac{\partial Q(\bar{r},\bar{y}'')}{\partial \bar{y}_i}=o\big(\frac{1}{\lambda^{1-\varepsilon}}\big),\quad i=4,5,\cdots,N.
\end{equation}
Moreover, by Lemma \ref{ener1}, the equation to determine $\lambda$ is
\begin{equation}\label{dela}
  -\frac{\tilde{B}_1}{\lambda^{2m+1}}+\frac{\tilde{B}_3k^{N-2m}}{\lambda^{N-2m+1}(\sqrt{1-\bar{h}^2})^{N-2m}}+
  \frac{\tilde{B}_4 k}{\lambda^{N-2m+1}\bar{h}^{N-2m-1}\sqrt{1-\bar{h}^2}}=O\Big(\frac{1}{\lambda^{2m+1+\varepsilon}}\Big),
\end{equation}
where $\tilde{B}_1$, $\tilde{B}_3$, $\tilde{B}_4$ are positive constants.

Let $\lambda=tk^{\frac{N-2m}{N-4m-\nu}}$ with $\nu=N-4M-\iota$, $\iota$ is a small constant, then $t\in [L_0,L_1]$.
From
\eqref{dela}, we have
\begin{equation*}
  -\frac{\tilde{B}_1}{t^{2m+1}}+\frac{\tilde{B}_3M_1^{N-2m}}{t^{N-2m+1-\nu}}=o(1),\quad t\in [L_0,L_1].
\end{equation*}

Define
\begin{equation*}
  F(t,\bar{r},\bar{y}'')=\Big(\nabla _{\bar{r},\bar{y}''}Q(\bar{r},\bar{y}''),-\frac{\tilde{B}_1}{t^{2m+1}}+\frac{\tilde{B}_3M_1^{N-2m}}{t^{N-2m+1-\nu}}\Big).
\end{equation*}
Then, it holds
\begin{equation*}
  deg\Big(F(t,\bar{r},\bar{y}''),[L_0,L_1]\times B_{\lambda^\frac{1}{1-\vartheta}}\big((r_0,y_0'')\big)\Big)=-deg\Big(\nabla _{\bar{r},\bar{y}''}Q(\bar{r},\bar{y}''),B_{\lambda^\frac{1}{1-\vartheta}}\big((r_0,y_0'')\big)\Big)\neq0.
\end{equation*}
Hence, \eqref{de1},\eqref{de2} and \eqref{dela} has a solution $t_k\in [L_0,L_1]$, $(\bar{r}_k,\bar{y}_k'')\in B_{\lambda^\frac{1}{1-\vartheta}}\big((r_0,y_0'')\big)$.
\qed

\section{Proof of Theorem \ref{th2}}\label{four}
In this section, we give a brief proof of Theorem \ref{th2}. We define $\tau=\frac{N-4m}{N-2m}$.  


\vspace{.3cm}

\noindent{\bf Proof of Theorem \ref{th2}.} We can verify that
\begin{equation}\label{same}
  \frac{k}{\lambda^\tau}=O(1),\quad \frac{k}{\lambda}=O\Big(\big(\frac{1}{\lambda}\big)^{\frac{2m}{N-2m}}\Big).
\end{equation}
Using \eqref{same} and Lemmas \ref{AppA5}, \ref{AppA6}, we get the same conclusions for problems arising from the distance between points $\{x_j^+\}_{j=1}^k$ and $\{x_j^-\}_{j=1}^k$.

Moreover, by Lemma \ref{AppA4}, we have
\begin{equation}\label{same1}
 |Z_{j,2}^{\pm}|\leq C\lambda^{-\beta_2}Z_{x_j^\pm,\lambda},\quad |Z_{j,l}^{\pm}|\leq C\lambda Z_{x_j^\pm,\lambda},\quad l=3,4,\cdots,N,
\end{equation}
where $\beta_2=\frac{N-4m}{N-2m}$.

Using \eqref{same} and \eqref{same1}, with a similar step in the proof of Theorem \ref{th1} in Sections \ref{two} and \ref{three}, we know
that the
proof of Theorem \ref{th2} has the same reduction structure as that of Theorem \ref{th1} and $u_k$ is a solution of problem
\eqref{pro} if the following equalities hold:
\begin{equation}\label{de1'}
  \frac{\partial Q(\bar{r},\bar{y}'')}{\partial \bar{r}} =o\big(\frac{1}{\lambda^{1-\varepsilon}}\big),
\end{equation}
\begin{equation}\label{de2'}
  \frac{\partial Q(\bar{r},\bar{y}'')}{\partial \bar{y}_i}=o\big(\frac{1}{\lambda^{1-\varepsilon}}\big),\quad i=4,5,\cdots,N,
\end{equation}
\begin{equation}\label{dela'}
  -\frac{\tilde{B}_1}{\lambda^{2m+1}}+\frac{\tilde{B}_3k^{N-2m}}{\lambda^{N-2m+1}(\sqrt{1-\bar{h}^2})^{N-2m}}+
  \frac{\tilde{B}_4 k}{\lambda^{N-2m+1}\bar{h}^{N-2m-1}\sqrt{1-\bar{h}^2}}=O\Big(\frac{1}{\lambda^{2m+1+\varepsilon}}\Big).
\end{equation}

Let $\lambda=tk^{\frac{N-2m}{N-4m}}$, then $t\in [L_0',L_1']$.
Next, we discuss the main items in \eqref{dela'}.

{\bf Case 1.} If $\bar{h}\rightarrow A\in (0,1)$, then $(\lambda^{\frac{N-4m}{N-2m}}\bar{h})^{-1}\rightarrow0$ as $\lambda\rightarrow\infty$,
from
\eqref{dela'}, we have
\begin{equation*}
  -\frac{\tilde{B}_1}{t^{2m+1}}+\frac{\tilde{B}_3}{t^{N-2m+1}(\sqrt{1-A^2})^{N-2m}}=o(1),\quad t\in [L_0',L_1'].
\end{equation*}
Define
\begin{equation*}
  F(t,\bar{r},\bar{y}'')=\Big(\nabla _{\bar{r},\bar{y}''}Q(\bar{r},\bar{y}''),-\frac{\tilde{B}_1}{t^{2m+1}}+\frac{\tilde{B}_3}{t^{N-2m+1}(\sqrt{1-A^2})^{N-2m}}\Big).
\end{equation*}
Then, it holds
\begin{equation*}
  deg\Big(F(t,\bar{r},\bar{y}''),[L_0,L_1]\times B_{\lambda^\frac{1}{1-\vartheta}}\big((r_0,y_0'')\big)\Big)=-deg\Big(\nabla _{\bar{r},\bar{y}''}Q(\bar{r},\bar{y}''),B_{\lambda^\frac{1}{1-\vartheta}}\big((r_0,y_0'')\big)\Big)\neq0.
\end{equation*}
Hence, \eqref{de1'}, \eqref{de2'} and \eqref{dela'} has a solution $t_k\in [L_0',L_1']$, $(\bar{r}_k,\bar{y}_k'')\in B_{\lambda^\frac{1}{1-\vartheta}}\big((r_0,y_0'')\big)$.

{\bf Case 2.} If $\bar{h}\rightarrow 0$ and $(\lambda^{\frac{N-4m}{N-2m}}\bar{h})^{-1}\rightarrow0$ as $\lambda\rightarrow\infty$,
from
\eqref{dela'}, we have
\begin{equation*}
  -\frac{\tilde{B}_1}{t^{2m+1}}+\frac{\tilde{B}_3}{t^{N-2m+1}}=o(1),\quad t\in [L_0',L_1'].
\end{equation*}
Define
\begin{equation*}
  F(t,\bar{r},\bar{y}'')=\Big(\nabla _{\bar{r},\bar{y}''}Q(\bar{r},\bar{y}''),-\frac{\tilde{B}_1}{t^{2m+1}}+\frac{\tilde{B}_3}{t^{N-2m+1}}\Big).
\end{equation*}
Then, we can find a solution $(t_k,\bar{r}_k,\bar{y}_k'')$ of \eqref{de1'}, \eqref{de2'} and \eqref{dela'} as before.

{\bf Case 3.} If $\bar{h}\rightarrow 0$ and $(\lambda^{\frac{N-4m}{N-2m}}\bar{h})^{-1}\rightarrow A\in (C_1,M_2)$ for some positive constant $C_1$ as $\lambda\rightarrow\infty$,
from
\eqref{dela'}, we have
\begin{equation*}
  -\frac{\tilde{B}_1}{t^{2m+1}}+\frac{\tilde{B}_3}{t^{N-2m+1}}+\frac{\tilde{B}_4A^{N-2m-1}}{t^{2m+1+\frac{N-4m}{N-2m}}}=o(1),\quad t\in [L_0',L_1'].
\end{equation*}
Since $N-2m+1$ and $2m+1+\frac{N-4m}{N-2m}$ are strictly greater than $2m+1$, there exists a solution of \eqref{de1'}, \eqref{de2'} and \eqref{dela'} as before.
\qed

\vspace{.5cm}

\begin{appendices}

\section{{\bf Some basic estimates}}\label{AppA}
In this section, we give some basic estimates.
\begin{lemma}\cite[Lemma B.1]{WY1}\label{AppA1}
For $y,x_i,x_j\in \mathbb{R}^N$, $i\neq j$, let
\begin{equation*}
  g_{ij}(y)=\frac{1}{(1+|y-x_i|)^{\kappa_1}}\frac{1}{(1+|y-x_j|)^{\kappa_2}},
\end{equation*}
where $\kappa_1,\kappa_2\geq 1$ are constants. Then for any constant $0<\sigma\leq \min\{\kappa_1,\kappa_2\}$, there exists a constant $C>0$ such that
\begin{equation*}
  g_{ij}(y)\leq \frac{C}{|x_i-x_j|^\sigma}\Big(\frac{1}{(1+|y-x_i|)^{\kappa_1+\kappa_2-\sigma}}+\frac{1}{(1+|y-x_j|)^{\kappa_1+\kappa_2-\sigma}}\Big).
\end{equation*}
\end{lemma}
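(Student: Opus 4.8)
The plan is to prove this pointwise interaction estimate by splitting $\mathbb{R}^N$ according to which of the two centres is farther from $y$, and using that on each piece the interdistance $|x_i-x_j|$ is controlled by the triangle inequality. First I would record that the hypothesis $0<\sigma\le\min\{\kappa_1,\kappa_2\}$ forces $\kappa_1-\sigma\ge0$, $\kappa_2-\sigma\ge0$ and $\kappa_1+\kappa_2-\sigma\ge\max\{\kappa_1,\kappa_2\}\ge1$, so every exponent appearing below is nonnegative and the map $t\mapsto t^{-a}$ is nonincreasing on $(0,\infty)$ for the relevant $a$. Then I would set $R_i=\{y:|y-x_i|\ge|y-x_j|\}$ and $R_j=\{y:|y-x_j|\ge|y-x_i|\}$, so that $\mathbb{R}^N=R_i\cup R_j$, and argue on the two regions symmetrically.

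On $R_i$ one has $|x_i-x_j|\le|y-x_i|+|y-x_j|\le2|y-x_i|\le2(1+|y-x_i|)$, hence $(1+|y-x_i|)^{-\sigma}\le2^{\sigma}|x_i-x_j|^{-\sigma}$; moreover $1+|y-x_i|\ge1+|y-x_j|$ together with $\kappa_1-\sigma\ge0$ gives $(1+|y-x_i|)^{-(\kappa_1-\sigma)}\le(1+|y-x_j|)^{-(\kappa_1-\sigma)}$. Writing $(1+|y-x_i|)^{-\kappa_1}=(1+|y-x_i|)^{-\sigma}(1+|y-x_i|)^{-(\kappa_1-\sigma)}$ and multiplying by $(1+|y-x_j|)^{-\kappa_2}$, I would conclude
\begin{equation*}
  g_{ij}(y)\le\frac{2^{\sigma}}{|x_i-x_j|^{\sigma}}\cdot\frac{1}{(1+|y-x_j|)^{\kappa_1+\kappa_2-\sigma}}\qquad\text{on }R_i .
\end{equation*}
Interchanging the roles of $i$ and $j$ (equivalently of $\kappa_1$ and $\kappa_2$) yields the same bound with $x_j$ replaced by $x_i$ on $R_j$. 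Since $R_i\cup R_j=\mathbb{R}^N$, adding the two right-hand members gives the asserted inequality with $C=2^{\sigma}$.

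I do not anticipate any genuine obstacle: the argument is nothing more than the triangle inequality plus monotonicity of power functions. The only point that needs a little care is the bookkeeping of the sign conditions $\kappa_1-\sigma\ge0$ and $\kappa_2-\sigma\ge0$ — precisely where the assumption $\sigma\le\min\{\kappa_1,\kappa_2\}$ enters — so that the one-sided comparison $1+|y-x_i|\ge1+|y-x_j|$ may be raised to those powers without flipping the inequality.
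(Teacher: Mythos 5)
Your argument is correct and is the standard proof of this interaction estimate. The paper itself does not prove the lemma — it is cited verbatim as Lemma~B.1 of Wei--Yan \cite{WY1} — and the proof there proceeds exactly as you propose: split $\mathbb{R}^N$ according to which centre is farther from $y$, use the triangle inequality $|x_i-x_j|\le 2\max\{|y-x_i|,|y-x_j|\}\le 2(1+\max\{|y-x_i|,|y-x_j|\})$ to extract the factor $|x_i-x_j|^{-\sigma}$, and use $\sigma\le\min\{\kappa_1,\kappa_2\}$ to transfer the remaining $\kappa_\ell-\sigma$ powers from the farther centre to the nearer one. One small stylistic remark: on each region you in fact bound $g_{ij}$ by a \emph{single} summand of the right-hand side, and the claimed inequality follows because the other summand is nonnegative; your phrase ``adding the two right-hand members'' is slightly imprecise but harmless.
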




\begin{lemma}\label{AppA4}
As $\lambda\rightarrow\infty$, we have
\begin{equation*}
  \frac{\partial U_{x_j^\pm,\lambda}}{\partial \lambda}=O(\lambda^{-1}U_{x_j^\pm,\lambda})+O(\lambda U_{x_j^\pm,\lambda})\frac{\partial \sqrt{1-\bar{h}^2}}{\partial \lambda}+
  O(\lambda U_{x_j^\pm,\lambda})\frac{\partial \bar{h}}{\partial \lambda}.
\end{equation*}
Hence, if $\sqrt{1-\bar{h}^2}=C\lambda^{-\beta_1}$ with $0<\beta_1<1$, we have
\begin{equation*}
  \Big|\frac{\partial U_{x_j^\pm,\lambda}}{\partial \lambda}\Big| \leq C\frac{U_{x_j^\pm,\lambda}}{\lambda^{\beta_1}}.
\end{equation*}
If $\bar{h}=C\lambda^{-\beta_2}$ with $0<\beta_2<1$, then we have
\begin{equation*}
  \Big|\frac{\partial U_{x_j^\pm,\lambda}}{\partial \lambda}\Big| \leq C\frac{U_{x_j^\pm,\lambda}}{\lambda^{\beta_2}}.
\end{equation*}
\end{lemma}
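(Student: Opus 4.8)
The plan is to regard $\partial U_{x_j^\pm,\lambda}/\partial\lambda$ as a total $\lambda$-derivative and to split it, by the chain rule, into the contribution of the scaling parameter $\lambda$ that appears explicitly in $U_{x,\lambda}$ and the contribution of the $\lambda$-dependence of the centre $x_j^\pm$. As a preliminary step I would differentiate the explicit formula $U_{x,\lambda}(y)=P_{m,N}^{\frac{N-2m}{4m}}\big(\lambda/(1+\lambda^2|y-x|^2)\big)^{\frac{N-2m}{2}}$ and record the two pointwise identities
\[
\frac{\partial U_{x,\lambda}}{\partial\lambda}\Big|_{x}=\frac{N-2m}{2\lambda}\cdot\frac{1-\lambda^2|y-x|^2}{1+\lambda^2|y-x|^2}\,U_{x,\lambda},\qquad
\nabla_x U_{x,\lambda}=(N-2m)\,\frac{\lambda^2(y-x)}{1+\lambda^2|y-x|^2}\,U_{x,\lambda}.
\]
Because $|1-t|\le 1+t$ and $t/(1+t^2)\le 1/2$ for $t\ge0$, these yield the bounds $\big|\,\partial_\lambda U_{x,\lambda}|_{x}\,\big|\le C\lambda^{-1}U_{x,\lambda}$ and $|\nabla_x U_{x,\lambda}|\le C\lambda\,U_{x,\lambda}$, uniformly in $y$ and in $j$.

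The next step exploits the structure of the ansatz: the parameters moved independently in the reduction are $\bar r$, $\bar y''$ and $\lambda$, while $\bar h$ is slaved to $\lambda$ through the relation imposed in \eqref{case1} (respectively \eqref{case2}). Hence, when $\partial/\partial\lambda$ is formed with $\bar r$ and $\bar y''$ held fixed, the only $\lambda$-dependence of
\[
x_j^\pm=\Big(\bar r\sqrt{1-\bar h^2}\cos\tfrac{2(j-1)\pi}{k},\ \bar r\sqrt{1-\bar h^2}\sin\tfrac{2(j-1)\pi}{k},\ \pm\bar r\bar h,\ \bar y''\Big)
\]
enters through $\bar h=\bar h(\lambda)$. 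Differentiating this formula in $\bar h$, multiplying by $\partial\bar h/\partial\lambda$ and using $\dfrac{\partial\sqrt{1-\bar h^2}}{\partial\bar h}\cdot\dfrac{\partial\bar h}{\partial\lambda}=\dfrac{\partial\sqrt{1-\bar h^2}}{\partial\lambda}$, one sees that every component of $\partial x_j^\pm/\partial\lambda$ is a bounded multiple (using $\bar r=O(1)$) of $\partial\sqrt{1-\bar h^2}/\partial\lambda$ or of $\partial\bar h/\partial\lambda$. Combining this with the chain rule $\dfrac{\partial U_{x_j^\pm,\lambda}}{\partial\lambda}=\dfrac{\partial U_{x,\lambda}}{\partial\lambda}\Big|_{x}+\nabla_x U_{x_j^\pm,\lambda}\cdot\dfrac{\partial x_j^\pm}{\partial\lambda}$ and the two bounds above gives the asserted identity
\[
\frac{\partial U_{x_j^\pm,\lambda}}{\partial\lambda}=O\big(\lambda^{-1}U_{x_j^\pm,\lambda}\big)+O\big(\lambda U_{x_j^\pm,\lambda}\big)\frac{\partial\sqrt{1-\bar h^2}}{\partial\lambda}+O\big(\lambda U_{x_j^\pm,\lambda}\big)\frac{\partial\bar h}{\partial\lambda}.
\]

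Finally, the two quantitative consequences follow by substituting the prescribed asymptotics and comparing exponents. If $\sqrt{1-\bar h^2}=C\lambda^{-\beta_1}$ with $0<\beta_1<1$, then $\partial_\lambda\sqrt{1-\bar h^2}=O(\lambda^{-\beta_1-1})$, and since $\bar h=(1-C^2\lambda^{-2\beta_1})^{1/2}$ also $\partial_\lambda\bar h=O(\lambda^{-2\beta_1-1})$; the three terms in the identity are then $O(\lambda^{-1})$, $O(\lambda^{-\beta_1})$ and $O(\lambda^{-2\beta_1})$ times $U_{x_j^\pm,\lambda}$, and because $0<\beta_1<1$ the exponent $-\beta_1$ dominates, so $\big|\partial_\lambda U_{x_j^\pm,\lambda}\big|\le C\lambda^{-\beta_1}U_{x_j^\pm,\lambda}$. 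The case $\bar h=C\lambda^{-\beta_2}$ is identical with the roles of $\sqrt{1-\bar h^2}$ and $\bar h$ swapped. The computation is routine; the only delicate points are the bookkeeping of which parameters are frozen and which are slaved when forming $\partial/\partial\lambda$ (this is exactly what makes the centre-motion term produce the $\partial\sqrt{1-\bar h^2}/\partial\lambda$ and $\partial\bar h/\partial\lambda$ factors rather than spurious $\partial\bar r/\partial\lambda$ or $\partial\bar y''/\partial\lambda$ terms) and the uniformity in $y$ and $j$ of the estimate $|\nabla_x U_{x,\lambda}|\le C\lambda U_{x,\lambda}$.
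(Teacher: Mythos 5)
Your proof is correct and is exactly the ``standard'' computation the paper has in mind when it states ``The proof is standard, we omit it'': differentiate the explicit bubble formula to get $\partial_\lambda U|_x = O(\lambda^{-1}U)$ and $|\nabla_x U|\le C\lambda U$ (via $|1-t^2|/(1+t^2)\le 1$ and $t/(1+t^2)\le 1/2$), apply the chain rule recognizing that only $\bar h(\lambda)$ among the parameters of $x_j^\pm$ is slaved to $\lambda$, and then plug in the prescribed asymptotics of $\sqrt{1-\bar h^2}$ (resp.\ $\bar h$) and compare exponents. Since the paper offers no written proof, there is nothing to contrast; your write-up is a valid and complete version of what was omitted.
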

\begin{proof}
The proof is standard, we omit it.
\end{proof}

\begin{lemma}\cite[Lemma A.2]{DHWW}\label{AppA5}
For any $\gamma>0$, there exists a constant $C>0$ such that
\begin{equation*}
  \sum\limits_{j=2}^k\frac{1}{|x_j^+-x_1^+|^\gamma}\leq \frac{Ck^\gamma}{(\bar{r}\sqrt{1-\bar{h}^2})^\gamma}\sum\limits_{j=2}^k\frac{1}{(j-1)^\gamma}\leq
  \left\{
  \begin{array}{ll}
  \frac{Ck^\gamma}{(\bar{r}\sqrt{1-\bar{h}^2})^\gamma},\quad \gamma>1;\\
  \frac{Ck^\gamma \log k}{(\bar{r}\sqrt{1-\bar{h}^2})^\gamma},\quad \gamma=1;\\
  \frac{Ck}{(\bar{r}\sqrt{1-\bar{h}^2})^\gamma},\quad \gamma<1,
    \end{array}
    \right.
\end{equation*}
and
\begin{equation*}
   \sum\limits_{j=1}^k\frac{1}{|x_j^--x_1^+|^\gamma}\leq  \sum\limits_{j=2}^k\frac{1}{|x_j^+-x_1^+|^\gamma}+\frac{C}{(\bar{r}\bar{h})^\gamma}.
\end{equation*}
\end{lemma}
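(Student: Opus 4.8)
The plan is to reduce the statement to an explicit computation of the pairwise distances between the points $x_j^\pm$, which lie on two circles of radius $\bar r\sqrt{1-\bar h^2}$ in the $(y_1,y_2)$-plane, placed at heights $y_3=\pm\bar r\bar h$ and sharing the same $\bar y''$-coordinate. First I would observe that for $2\le j\le k$ the vectors $x_j^+-x_1^+$ and $x_j^--x_1^+$ have the same first two components, namely $\bar r\sqrt{1-\bar h^2}\bigl(\cos\tfrac{2(j-1)\pi}{k}-1,\ \sin\tfrac{2(j-1)\pi}{k}\bigr)$, while their $y_3$-components are $0$ and $-2\bar r\bar h$ respectively, and all remaining components vanish. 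Using $2-2\cos\theta=4\sin^2(\theta/2)$ this yields the identities
\begin{align*}
  |x_j^+-x_1^+| &= 2\bar r\sqrt{1-\bar h^2}\,\Bigl|\sin\tfrac{(j-1)\pi}{k}\Bigr|,\\
  |x_j^--x_1^+|^2 &= |x_j^+-x_1^+|^2+4\bar r^2\bar h^2 .
\end{align*}

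For the first chain of inequalities I would set $i=j-1$, use the elementary bound $\sin\theta\ge\frac{2}{\pi}\theta$ on $[0,\pi/2]$ together with the symmetry $\sin\frac{i\pi}{k}=\sin\frac{(k-i)\pi}{k}$ to fold the index range at $i=k/2$, obtaining $2\bigl|\sin\frac{i\pi}{k}\bigr|\ge c\,\min\{i,k-i\}/k$, and hence
\begin{align*}
  \sum_{j=2}^{k}\frac{1}{|x_j^+-x_1^+|^\gamma}
  &=\frac{1}{(2\bar r\sqrt{1-\bar h^2})^\gamma}\sum_{i=1}^{k-1}\frac{1}{\sin^\gamma\frac{i\pi}{k}}\\
  &\le\frac{Ck^\gamma}{(\bar r\sqrt{1-\bar h^2})^\gamma}\sum_{j=2}^{k}\frac{1}{(j-1)^\gamma}.
\end{align*}
The three stated cases then follow by comparing $\sum_{i=1}^{k-1}i^{-\gamma}$ with $\int_1^k t^{-\gamma}\,dt$, which is $O(1)$ for $\gamma>1$, $O(\log k)$ for $\gamma=1$, and $O(k^{1-\gamma})$ for $\gamma<1$.

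For the second inequality I would split the sum at $j=1$: the term $j=1$ equals $(2\bar r\bar h)^{-\gamma}\le C(\bar r\bar h)^{-\gamma}$, while for $j\ge2$ the displayed identity gives $|x_j^--x_1^+|\ge|x_j^+-x_1^+|$, so $\sum_{j\ge2}|x_j^--x_1^+|^{-\gamma}\le\sum_{j\ge2}|x_j^+-x_1^+|^{-\gamma}$; adding the two pieces gives the claim.

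The whole argument is elementary, so there is no genuine obstacle; the one step that needs a little care is the folding of the index set at $i=k/2$, so that the lower bound $\sin\frac{i\pi}{k}\gtrsim i/k$ is invoked only on the range where the argument lies in $[0,\pi/2]$ and, by symmetry, on its mirror image on the complementary range. This is exactly what reduces everything to the one-dimensional sum $\sum_i i^{-\gamma}$ and hence to the three regimes above. (The statement is [DHWW, Lemma A.2]; the proof is recorded here only for completeness.)
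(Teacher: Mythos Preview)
Your proof is correct and follows the standard elementary argument; the paper itself does not give a proof but simply cites \cite[Lemma~A.2]{DHWW}, so there is nothing to compare against beyond noting that your derivation is precisely the computation one would expect to find there. The explicit formulas $|x_j^+-x_1^+|=2\bar r\sqrt{1-\bar h^2}\,|\sin\frac{(j-1)\pi}{k}|$ and $|x_j^--x_1^+|^2=|x_j^+-x_1^+|^2+4\bar r^2\bar h^2$, the folding at $i=k/2$ via $\sin\theta\ge\frac{2}{\pi}\theta$, and the integral comparison for $\sum i^{-\gamma}$ are exactly the right ingredients.
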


\begin{lemma}\cite[Lemma A.6]{CW}\label{AppA6}
Assume that $N>4m+1$, as $k\rightarrow\infty$, we have
\begin{equation*}
  \sum\limits_{j=2}^k\frac{1}{|x_j^+-x_1^+|^{N-2m}}=\frac{A_1k^{N-2m}}{(\sqrt{1-\bar{h}^2})^{N-2m}}\Big(1+o\big(\frac{1}{k}\big)\Big),
\end{equation*}
and if $\frac{1}{\bar{h}k}=o(1)$, then
\begin{equation*}
   \sum\limits_{j=1}^k\frac{1}{|x_j^--x_1^+|^{N-2m}}=\frac{A_2k}{\bar{h}^{N-2m-1}(\sqrt{1-\bar{h}^2})}\Big(1+o\big(\frac{1}{\bar{h}k}\big)\Big)+O\Big(\frac{1}{(\sqrt{1-\bar{h}^2})^{N-2m}}\Big),
\end{equation*}
or else, $\frac{1}{\bar{h}k}=C$, then
\begin{equation*}
   \sum\limits_{j=1}^k\frac{1}{|x_j^--x_1^+|^{N-2m}}=\Big(\frac{A_3k}{\bar{h}^{N-2m-1}},\frac{A_4k}{\bar{h}^{N-2m-1}}\Big),
\end{equation*}
where $A_1$, $A_2$, $A_3$ and $A_4$ are some positive constants.
\end{lemma}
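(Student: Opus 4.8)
The plan is to reduce both sums to one-dimensional trigonometric sums using the explicit geometry of the configuration, and then to estimate those by comparison with a convergent zeta-series (first sum) and with a rescaled Riemann integral (second sum); the hypothesis $N>4m+1$ enters only through $N-2m>1$, which is what makes the relevant series and integrals converge. First I would record that $x_1^+,\dots,x_k^+$ lie on the circle of radius $\bar r\sqrt{1-\bar h^2}$ in the $(y_1,y_2)$-plane at the equally spaced angles $\tfrac{2(j-1)\pi}{k}$ and share the coordinates $y_3=\bar r\bar h$, $y''=\bar y''$; hence
\begin{equation*}
  |x_j^+-x_1^+| = 2\bar r\sqrt{1-\bar h^2}\,\sin\tfrac{(j-1)\pi}{k},\qquad
  |x_j^--x_1^+|^2 = 4\bar r^2(1-\bar h^2)\sin^2\tfrac{(j-1)\pi}{k}+4\bar r^2\bar h^2,
\end{equation*}
the extra term in the second identity being the square of the $y_3$-separation $2\bar r\bar h$. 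Since $\bar r\to r_0$ with $|\bar r-r_0|$ much smaller than $1/k$, powers of $\bar r$ can be replaced by $r_0$ up to errors that will be absorbed below, so I suppress them.

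For the first assertion, substituting the distance formula reduces matters to the estimate $\sum_{l=1}^{k-1}\sin^{-(N-2m)}\tfrac{l\pi}{k}=\tfrac{2\zeta(N-2m)}{\pi^{N-2m}}\,k^{N-2m}\big(1+o(1/k)\big)$. I would obtain this from the symmetry $\sin\tfrac{l\pi}{k}=\sin\tfrac{(k-l)\pi}{k}$ (which restricts attention to $1\le l\le k/2$, where $\sin\tfrac{l\pi}{k}=\tfrac{l\pi}{k}\big(1+O((l/k)^2)\big)$) together with the convergence of $\sum_{l\ge1}l^{-(N-2m)}=\zeta(N-2m)$, valid since $N-2m>1$; the $o(1/k)$ comes from discarding the tail $l\gtrsim\sqrt k$ and from the quadratic correction to $\sin$. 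This gives the first claim with $A_1=2\zeta(N-2m)(2\pi r_0)^{-(N-2m)}$.

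For the second assertion the two terms in the denominator of $|x_j^--x_1^+|^{-(N-2m)}$ compete on the scale $\sin\tfrac{l\pi}{k}\sim\bar h$. Isolating the indices with $l\le\varepsilon k$ (and symmetrically $k-l\le\varepsilon k$), where $\sin\tfrac{l\pi}{k}=\tfrac{l\pi}{k}(1+O(\varepsilon^2))$, the corresponding partial sum equals, up to an $O(\varepsilon^2)$ relative error, a Riemann sum of mesh $\pi/k$ for $\tfrac{k}{\pi}\int_0^{\varepsilon\pi}\big((1-\bar h^2)t^2+\bar h^2\big)^{-(N-2m)/2}\,dt$. The substitution $t=\tfrac{\bar h}{\sqrt{1-\bar h^2}}\,s$ converts this integral into $\tfrac{\bar h^{-(N-2m-1)}}{\sqrt{1-\bar h^2}}\int_0^{\varepsilon\pi\sqrt{1-\bar h^2}/\bar h}(1+s^2)^{-(N-2m)/2}\,ds$, whose upper limit tends to $\infty$ exactly when $\tfrac{1}{\bar h k}=o(1)$, in which case it converges (again because $N-2m>1$); moreover the Riemann sum is asymptotic to the integral precisely when the mesh $\pi/k$ is small on the integrand's variation scale $\bar h$, i.e.\ again when $\tfrac{1}{\bar h k}=o(1)$. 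This yields the stated main term $\tfrac{A_2k}{\bar h^{N-2m-1}\sqrt{1-\bar h^2}}\big(1+o(1/(\bar h k))\big)$, the remaining indices $\varepsilon k<l<(1-\varepsilon)k$ together with the $O(\varepsilon)$-errors being of lower order (after letting $\varepsilon\to0$ slowly) and absorbed into $O\big((\sqrt{1-\bar h^2})^{-(N-2m)}\big)$. If instead $\tfrac{1}{\bar h k}=C$, the upper limit of the $s$-integral stays bounded and the Riemann sum no longer converges to it, so one can only trap the sum between two positive multiples of $k\bar h^{-(N-2m-1)}$, giving the two-sided conclusion.

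The distance identities and the first sum are routine. The main obstacle is the second sum: one must make the Riemann-sum comparison uniform in $\bar h$ across the crossover $\bar h k\to\infty$ versus $\bar h k\asymp1$, keep all error contributions (the tail of the $\sin$-expansion, the bulk indices $l\asymp k$, and the Riemann-sum defect) of strictly lower order than the main terms in the parameter regime at hand, and verify that the weight emerging from the rescaling is exactly $\bar h^{-(N-2m-1)}(1-\bar h^2)^{-1/2}$ — which forces one to track the $1-\bar h^2$ inside the square root carefully.
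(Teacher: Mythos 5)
Your reduction to explicit trigonometric sums via the two distance identities is correct and is the natural route; this is almost certainly the intended argument in \cite{CW}. The first-sum estimate is sound: with $\gamma=N-2m\geq 2m+2>2$, the series $\sum_{l\geq1}l^{-\gamma}=\zeta(\gamma)$ converges, and both the tail beyond $l\sim k/2$ and the quadratic correction from $\sin t\approx t$ contribute $O(k^{\gamma-2})=o(k^{\gamma-1})$, yielding the stated $1+o(1/k)$. For the second sum the main term is also obtained correctly: the substitution $t=\bar h s/\sqrt{1-\bar h^2}$ extracts precisely $\bar h^{-(N-2m-1)}/\sqrt{1-\bar h^2}$, and the condition $1/(\bar h k)=o(1)$ is indeed the one that makes the mesh $\pi/k$ small relative to the integrand's variation scale and makes the $l=0$ term $\bar h^{-\gamma}$ subdominant to $k\bar h^{-(\gamma-1)}$.

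Two points in your error discussion are off. First, the remark that ``the upper limit tends to $\infty$ exactly when $1/(\bar h k)=o(1)$'' is not right: with $\varepsilon$ a fixed cutoff, the upper limit $\varepsilon\pi\sqrt{1-\bar h^2}/\bar h$ contains no $k$ at all and tends to $\infty$ iff $\bar h\to 0$; the only role of $\bar h k\to\infty$ is in the Riemann-sum discretization error, which you do state correctly afterwards. Second, and more substantively, the bulk indices $\varepsilon k<l<(1-\varepsilon)k$ comprise $\sim k$ terms, each of size $\lesssim \varepsilon^{-(N-2m)}(1-\bar h^2)^{-(N-2m)/2}$, so their total is $O\bigl(k\,\varepsilon^{-(N-2m)}(\sqrt{1-\bar h^2})^{-(N-2m)}\bigr)$; this cannot be folded into a single $O\bigl((\sqrt{1-\bar h^2})^{-(N-2m)}\bigr)$ as you assert, and letting $\varepsilon\to0$ slowly makes the bound worse, not better. (The displayed error term in the lemma appears to be missing the factor $k$, and since it is in any case negligible against the first sum's $k^{N-2m}(\sqrt{1-\bar h^2})^{-(N-2m)}$ in the paper's applications, this does not affect anything downstream --- but your argument as written does not establish the error estimate in the form stated.) The $\bar h k=O(1)$ case, where one only traps the sum between two multiples of $k\bar h^{-(N-2m-1)}$, is handled correctly.
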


Suppose that $u$ and $v$ are two smooth functions in a given bounded domain $\Omega$,
define the following two bilinear functionals
\begin{equation*}
  L_{1}(u,v)=\int_\Omega \Big((-\Delta)^m u \langle y,\nabla v\rangle+(-\Delta)^m v \langle y,\nabla u\rangle\Big) dy,
\end{equation*}
\begin{equation*}
  L_{2,i}(u,v)=\int_\Omega \Big((-\Delta)^m u \frac{\partial v}{\partial y_i}+(-\Delta)^m v \frac{\partial u}{\partial y_i}\Big)dy,\quad i=1,2,\cdots,N.
\end{equation*}
We have
\begin{lemma}\cite[Proposition 3.3]{GPY}\label{AppA8}
For any integer $m>0$, there exists a function $f_{m}(u,v)$ such that
\begin{equation*}
  L_{1}(u,v)=\int_{\partial \Omega}f_{m}(u,v)dy-\frac{N-2m}{2}\int_\Omega \Big(v (-\Delta)^m u+u (-\Delta)^m v\Big)dy.
\end{equation*}
Moreover, $f_{m}(u,v)$ has the form
\begin{equation*}
  f_{m}(u,v)=\sum\limits_{j=1}^{2m-1}\bar{l}_{j}(y,\nabla ^j u,\nabla^{2m-j}v)+\sum\limits_{j=0}^{2m-1}\tilde{l}_{j}(\nabla ^j u,\nabla^{2m-j-1}v),
\end{equation*}
where $\bar{l}_{j}(y,\nabla ^j u,\nabla^{2m-j}v)$ and $\tilde{l}_{j}(\nabla ^j u,\nabla^{2m-j-1}v)$ are linear in each component.
\end{lemma}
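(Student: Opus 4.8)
\noindent{\bf Proof strategy.} The plan is to argue by induction on $m$, the two analytic ingredients being the scaling (commutator) identity
\begin{equation*}
  (-\Delta)^m\langle y,\nabla w\rangle=\langle y,\nabla(-\Delta)^m w\rangle+2m(-\Delta)^m w,
\end{equation*}
which I would verify first for $m=1$ from $\Delta\langle y,\nabla w\rangle=\langle y,\nabla\Delta w\rangle+2\Delta w$ and then iterate, and the classical iterated Green formula
\begin{equation*}
  \int_\Omega\big(v(-\Delta)^m u-u(-\Delta)^m v\big)dy=\int_{\partial\Omega}\sum_{k=0}^{m-1}\big(\partial_\nu(-\Delta)^k u\,(-\Delta)^{m-1-k}v-(-\Delta)^k u\,\partial_\nu(-\Delta)^{m-1-k}v\big)d\sigma,
\end{equation*}
obtained by integrating by parts $2m$ times; its boundary integrand is already a sum of terms $\tilde l_j(\nabla^j u,\nabla^{2m-1-j}v)$, bilinear, with no $y$-factor and total derivative order $2m-1$. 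For the base case $m=1$ this is the Rellich--Pohozaev identity: writing the symmetric combination $(-\Delta u)\langle y,\nabla v\rangle+(-\Delta v)\langle y,\nabla u\rangle$ as a divergence minus $(N-2)\nabla u\cdot\nabla v$, then replacing $\nabla u\cdot\nabla v$ by $\tfrac12\,\mathrm{div}(u\nabla v+v\nabla u)+\tfrac12\big(u(-\Delta v)+v(-\Delta u)\big)$ and applying the divergence theorem, one gets $L_1(u,v)=\int_{\partial\Omega}f_1(u,v)\,d\sigma-\tfrac{N-2}{2}\int_\Omega\big(v(-\Delta u)+u(-\Delta v)\big)dy$ with $f_1$ built from $\langle y,\nu\rangle\nabla u\cdot\nabla v$, $\langle y,\nabla u\rangle\partial_\nu v$, $u\,\partial_\nu v$ and the symmetric terms, i.e.\ exactly of the stated form with $\bar l_1$ and $\tilde l_0,\tilde l_1$.

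For the inductive step I would assume the identity for $m-1$ with a boundary density $f_{m-1}$ of the asserted form, and apply it to the pair $(a,b)=(-\Delta u,v)$. The first bulk term then reads $\int_\Omega(-\Delta)^m u\,\langle y,\nabla v\rangle dy$, while the second, $\int_\Omega(-\Delta)^{m-1}v\,\langle y,\nabla(-\Delta u)\rangle dy$, is rewritten with the commutator identity $\langle y,\nabla(-\Delta u)\rangle=(-\Delta)\langle y,\nabla u\rangle-2(-\Delta u)$ together with one Green integration by parts as $\int_\Omega(-\Delta)^m v\,\langle y,\nabla u\rangle dy-2\int_\Omega(-\Delta u)(-\Delta)^{m-1}v\,dy$ plus a boundary term $\langle y,\nabla u\rangle\partial_\nu(-\Delta)^{m-1}v-\partial_\nu\langle y,\nabla u\rangle\,(-\Delta)^{m-1}v$. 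Collecting terms (the inductive hypothesis supplying the constant $\tfrac{N-2(m-1)}{2}$) and replacing $\int_\Omega(-\Delta u)(-\Delta)^{m-1}v\,dy$ by $\int_\Omega v(-\Delta)^m u\,dy$ up to boundary terms via iterated integration by parts, the coefficients combine as $2-\tfrac{N-2m+2}{2}-\tfrac{N-2m+2}{2}=-(N-2m)$, so $L_1(u,v)=\int_{\partial\Omega}(\cdots)\,d\sigma-(N-2m)\int_\Omega v(-\Delta)^m u\,dy$. Finally, writing $(N-2m)\int_\Omega v(-\Delta)^m u\,dy=\tfrac{N-2m}{2}\int_\Omega\big(v(-\Delta)^m u+u(-\Delta)^m v\big)dy+\tfrac{N-2m}{2}\int_\Omega\big(v(-\Delta)^m u-u(-\Delta)^m v\big)dy$ and converting the last integral into a boundary integral by the iterated Green formula gives the desired identity.

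The remaining task is the bookkeeping for the structure of $f_m$. Every boundary density produced above is bilinear in the derivatives of $u$ and $v$ and linear in any occurrence of $y$, and one checks inductively that its total derivative order on $(u,v)$ is $2m$ exactly when a single linear $y$-factor is present and $2m-1$ otherwise: the substitution $a=-\Delta u$ shifts the $u$-order in $f_{m-1}$ up by two, keeping both orders admissible; the boundary term from the commutator step contributes $\bar l_1,\bar l_2$ and $\tilde l_1$ pieces; and the two reductions via the iterated Green formula contribute only $\tilde l_j$-type pieces. In particular no genuine derivative of order $2m$ ever appears, precisely because the commutator identity prevents a Laplacian from being ``trapped'' on $u$. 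Hence $f_m$ has the claimed form, with $j$ running over $1,\dots,2m-1$ for the $\bar l_j$ and over $0,\dots,2m-1$ for the $\tilde l_j$. I expect this combinatorial control of the boundary terms — not any analytic difficulty — to be the main obstacle: along each integration-by-parts and symmetrisation step one must verify that a $y$-factor never accompanies a derivative of order $2m$ and that $y$-free terms always have total order exactly $2m-1$, which is routine once the commutator identity is in hand.
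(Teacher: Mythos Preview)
The paper does not supply its own proof of this lemma: it is quoted verbatim from \cite[Proposition~3.3]{GPY} and used as a black box in deriving \eqref{con11}. So there is no ``paper proof'' to compare against beyond the citation.

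Your inductive argument is a correct and standard route to the identity. The commutator relation $(-\Delta)^m\langle y,\nabla w\rangle=\langle y,\nabla(-\Delta)^m w\rangle+2m(-\Delta)^m w$ is exactly what drives the Rellich--Poho\u{z}aev structure; your base case $m=1$ is the classical polarized Poho\u{z}aev identity, and the inductive reduction $(a,b)=(-\Delta u,v)$ together with one Green step and the commutator gives precisely the coefficient arithmetic $2-\tfrac{N-2m+2}{2}-\tfrac{N-2m+2}{2}=-(N-2m)$ you record. The final symmetrisation via the iterated Green formula is also fine. The bookkeeping for the form of $f_m$ is, as you note, the only laborious part: substituting $-\Delta u$ for $u$ in $f_{m-1}$ shifts the $u$-order by two (so $j$ in the $\bar l_j$ sum moves from $\{1,\dots,2m-3\}$ to $\{3,\dots,2m-1\}$, and similarly for $\tilde l_j$), the commutator boundary term $\partial_\nu\langle y,\nabla u\rangle\,(-\Delta)^{m-1}v=\big(\partial_\nu u+\langle y,\nabla\nabla u\cdot\nu\rangle\big)(-\Delta)^{m-1}v$ splits into a $\tilde l_1$ piece and a $\bar l_2$ piece, and all remaining Green boundary terms are $y$-free of total order $2m-1$. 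This closes the induction for the structural claim as well.
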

\begin{lemma}\cite[Proposition 3.1]{GPY}\label{AppA9}
For any integer $m>0$, there exists a function $g_{m,i}(u,v)$ such that
\begin{equation*}
  L_{2,i}(u,v)=\int_{\partial \Omega}g_{m,i}(u,v)dy.
\end{equation*}
Moreover, $g_{m,i}(u,v)$ has the form
\begin{equation*}
  g_{m,i}(u,v)=\sum\limits_{j=1}^{2m-1}l_{j,i}(\nabla ^j u,\nabla^{2m-j}v),
\end{equation*}
where $l_{j,i}(\nabla ^j u,\nabla^{2m-j}v)$ is bilinear in $\nabla ^j u$ and $\nabla ^{2m-j} u$.
\end{lemma}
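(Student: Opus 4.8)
The identity is the polyharmonic Poho\u{z}aev relation attached to the translation field $\partial/\partial y_i$, and the strategy is to realize the bulk integrand $(-\Delta)^m u\,\tfrac{\partial v}{\partial y_i}+(-\Delta)^m v\,\tfrac{\partial u}{\partial y_i}$ as an exact divergence whose potential is a finite sum of bilinear forms in the derivatives of $u$ and $v$. First I would expand $\int_\Omega(-\Delta)^m u\,\tfrac{\partial v}{\partial y_i}\,dy$ by integration by parts, transferring the Laplacians from $u$ onto $\tfrac{\partial v}{\partial y_i}$ one at a time, each transfer costing two integrations by parts via $\int_\Omega(-\Delta)w\,\varphi\,dy=\int_\Omega\nabla w\cdot\nabla\varphi\,dy-\int_{\partial\Omega}(\partial_\nu w)\,\varphi\,d\sigma$ followed by $\int_\Omega\nabla w\cdot\nabla\varphi\,dy=\int_\Omega w\,(-\Delta)\varphi\,dy+\int_{\partial\Omega}w\,(\partial_\nu\varphi)\,d\sigma$. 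Each transfer leaves behind two boundary integrals, each bilinear in a derivative of $u$ and a derivative of $v$ whose orders add up to $2m$, with the order on $u$ staying in $\{1,\dots,2m-1\}$. After $\lfloor m/2\rfloor$ such transfers (plus one final single integration by parts when $m$ is odd) the remaining bulk term is $\int_\Omega(-\Delta)^{m/2}u\,\tfrac{\partial}{\partial y_i}\big((-\Delta)^{m/2}v\big)\,dy$ if $m$ is even, respectively $\int_\Omega\nabla\big((-\Delta)^{(m-1)/2}u\big)\cdot\nabla\big(\tfrac{\partial}{\partial y_i}(-\Delta)^{(m-1)/2}v\big)\,dy$ if $m$ is odd, where I have used that $\tfrac{\partial}{\partial y_i}$ commutes with $-\Delta$.

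Next I would add the analogous expansion of the companion term $\int_\Omega(-\Delta)^m v\,\tfrac{\partial u}{\partial y_i}\,dy$. The two surviving bulk remainders then combine, in the even case, into $\int_\Omega\tfrac{\partial}{\partial y_i}\big((-\Delta)^{m/2}u\,(-\Delta)^{m/2}v\big)\,dy$ and, in the odd case, into $\int_\Omega\tfrac{\partial}{\partial y_i}\big(\nabla((-\Delta)^{(m-1)/2}u)\cdot\nabla((-\Delta)^{(m-1)/2}v)\big)\,dy$; in either case this is an exact divergence, hence equals the boundary integral of the bracketed quantity times $\nu_i$. Collecting this leading boundary term with all the boundary contributions produced along the way yields $L_{2,i}(u,v)=\int_{\partial\Omega}g_{m,i}(u,v)\,d\sigma$, and by construction $g_{m,i}(u,v)$ is a finite sum of expressions each bilinear in $\nabla^j u$ and $\nabla^{2m-j}v$ with $1\le j\le 2m-1$ (the large-$j$ terms coming from the first integral, the small-$j$ ones from the second). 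An equivalent route is induction on $m$: the base case is the classical identity $-\Delta u\,\tfrac{\partial v}{\partial y_i}-\Delta v\,\tfrac{\partial u}{\partial y_i}=\sum_j\tfrac{\partial}{\partial y_j}\big(-\tfrac{\partial u}{\partial y_j}\tfrac{\partial v}{\partial y_i}-\tfrac{\partial v}{\partial y_j}\tfrac{\partial u}{\partial y_i}+\delta_{ij}\,\nabla u\cdot\nabla v\big)$, and the inductive step uses $(-\Delta)a\,b-a\,(-\Delta)b=\mathrm{div}(a\nabla b-b\nabla a)$ with $a=(-\Delta)^{m-1}u$, $b=\tfrac{\partial v}{\partial y_i}$, which drops the exponent on $u$ by one at the price of replacing $v$ by $(-\Delta)v$.

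The essential difficulty is organizational rather than conceptual: one must check that along every one of the roughly $2m$ integration-by-parts steps the derivative order landing on $u$ stays in $\{1,\dots,2m-1\}$, so that no spurious $j=0$ or $j=2m$ boundary term appears; one must keep careful track of the parity of $m$ so that the leftover bulk term is correctly identified; and one must verify that the finitely many boundary terms really assemble into bilinear functionals $l_{j,i}(\nabla^j u,\nabla^{2m-j}v)$. Once this bookkeeping is done, the divergence theorem delivers the stated formula. This is precisely the content of \cite[Proposition 3.1]{GPY}, which may be invoked directly.
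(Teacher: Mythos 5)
The paper does not prove this lemma; it is invoked verbatim as \cite[Proposition 3.1]{GPY}, and no argument is reproduced in the text. Your outline is therefore not being compared against a proof in the paper but against a black-box citation, and in that sense citing the result directly, as you do in your final sentence, is exactly what the paper does.

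That said, your reconstruction of the underlying argument is sound, and the inductive route you mention is in fact the cleanest way to see it. Using the pointwise identity $a\Delta b - b\Delta a = \operatorname{div}(a\nabla b - b\nabla a)$ with $a=(-\Delta)^{m-1}u$, $b=\partial v/\partial y_i$ and then commuting $\partial/\partial y_i$ past $-\Delta$ gives
\begin{equation*}
L_{2,i}^{(m)}(u,v)=L_{2,i}^{(m-1)}\bigl(u,(-\Delta)v\bigr)+\int_{\partial\Omega}\Bigl((-\Delta)^{m-1}u\,\nabla\tfrac{\partial v}{\partial y_i}-\tfrac{\partial v}{\partial y_i}\,\nabla(-\Delta)^{m-1}u\Bigr)\cdot\nu\,d\sigma,
\end{equation*}
and the two boundary contributions have $(j,2m-j)=(2m-2,2)$ and $(2m-1,1)$ respectively, while the inductive hypothesis supplies $j=1,\dots,2m-3$ after noting that $\nabla^{k}\bigl((-\Delta)v\bigr)$ is built from components of $\nabla^{k+2}v$. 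Together these fill out exactly $j=1,\dots,2m-1$ with no $j=0$ or $j=2m$ endpoint terms, which is the bookkeeping point you correctly flag as the crux. The ``symmetric transfer'' route you describe first also works, with the parity split at $m/2$ and the divergence form of the leftover bulk term; one small imprecision is that each ``Laplacian transfer'' produces two boundary contributions, so the count is closer to $2m$ integrations by parts overall rather than $\lfloor m/2\rfloor$ steps, but this does not affect the conclusion. Either way, the derivation is a standard exercise, and falling back on the citation to \cite{GPY} is legitimate and matches the paper's treatment.
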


\medskip
\subsection*{Acknowledgments}
The research has been supported by Natural Science Foundation of Chongqing, China  CSTB2024 NSCQ-LZX0038.

\subsection*{Statements and Declarations}
{\bf Conflict of interest} On behalf of all authors, the corresponding author states that there
is no conflict of interest. The manuscript has no associated data.

\end{appendices}

\end{document}